\newcommand*{\minwidthbox}[2]{%
	\makebox[{\ifdim#2<\width\width\else#2\fi}]{#1}%
}
\newcommand{\interp}[1]{\minwidthbox{$\mathsf{#1}$}{6pt}}
\newcommand{\lin}{\mathrel{\vartriangleleft}}
\DeclareMathOperator{\sparse}{Sparse}
\DeclareMathOperator{\ground}{{L}}
\DeclareMathOperator{\restr}{\rm Restr}
\newcommand{\rel}[2]{{\rm Rel}_{#2}(#1)}
\newcommand{\am}[1]{\mathop{\rm Amalg}(#1)}
\newcommand{\bij}[4]{%
\xymatrix@M=0pt{{#1}\,\ar@<2.5pt>@{|->}[r]^{\scriptscriptstyle\smash{#2}}&\,{#3}\ar@<1.5pt>@{|->}[l]^{\scriptscriptstyle\phantom{x}\smash{#4}\phantom{x}}}}
\newcommand{\Tbij}[4]{%
	\xymatrix@M=0pt{{#1\,}\ar@<2pt>@{->>}[r]^(.4){\scriptscriptstyle\smash{\interp{#2}}}&\,{#3}\ar@<1pt>@{->>}[l]^(.4){\scriptscriptstyle\phantom{x}\smash{\interp{#4}}\phantom{x}}}}
\newcommand{\build}[1][N]{\interp{Build}_{#1}}
\newtheorem*{conj}{Conjecture~\ref{conj:inverse}'}
\newcommand{\down}[1]{\mathord{\downarrow}#1}
\title{Decomposing graphs into stable and ordered parts}
\titlerunning{Decomposing graphs into stable and ordered parts}
\author{Hector Buffi\`ere}{Université Paris Cité, CNRS, IRIF, Paris, France \and Centre d'Analyse et de Mathématique Sociales, CNRS/EHESS, Paris, France}{buffiere@irif.fr}{https://orcid.org/0009-0004-1385-1813}{}
\author{Patrice Ossona de Mendez}{Centre d'Analyse et de Mathématique Sociales CNRS UMR 8557, France \and  Computer Science Institute of Charles University (IUUK), Praha, Czech Republic}{pom@ehess.fr}{https://orcid.org/0000-0003-0724-3729}{}
\authorrunning{H.\ Buffi\`ere, P.\ Ossona de Mendez} %
\keywords{First-order logic, stability, linear cliquewidth, graph decomposition} %
\hfill\phantom{.}\linebreak
\hfill\begin{minipage}{.3\textwidth}
\begin{document}
	\maketitle
	\begin{abstract}
    Connections between structural graph theory and finite model theory recently gained a lot of attention. In this setting, many interesting questions remain on the properties of dependent (NIP) hereditary classes of graphs, in particular related to first-order transductions.

    In this paper, we study modelizations (which are strong forms of transduction pairings) of classes of graphs by classes of structures. In particular, we consider models obtained by coupling a partial order and a colored graph (thus forming a partially ordered colored graph). 
    
     Motivated by Simon's decomposition theorem of dependent types into a stable part and a distal (order-like) part,
     we conjecture that every dependent hereditary class of graphs admits a modelization in a monadically dependent coupling of a class of posets with bounded treewidth cover graphs and a monadically stable class of colored graphs.
     
     In this paper, we consider the first non-trivial case (classes with bounded linear cliquewidth) and prove that the conjecture holds in a strong form, 
     the model class being a monadically dependent coupling of a class of  disjoint unions of chains and a class of colored graphs with bounded pathwidth. 
     
     We extend our study to classes that admit bounded-size bounded linear cliquewidth  decompositions and prove that they have a modelization in a monadically dependent coupling of a class of disjoint unions of chains and a class of colored graphs with bounded expansion, the model class  also admitting bounded-size bounded linear cliquewidth decompositions.
	\end{abstract}
\section{Introduction}
In recent years, there has been a growing interest in the structural study of hereditary
classes of graphs in the light of model theory. Particularly, since the identification in
\cite{Adler2013} of a bridge between the graph theoretical notion of a nowhere dense class \cite{ND_logic,ND_characterization}
and the model theoretical notions of stability \cite{shelah1969stable},  dependence \cite{Shelah2004}, and their monadic
counterparts~\cite{BS1985monadic}, a specific attention has been given to
stability and dependence in the context of structural and algorithmic graph theory. These studies are deeply motivated by the conjecture that First-Order ({\sf FO}) model checking is fixed-parameter tractable on a hereditary class of graphs
$\mathscr C$ if and only if the class $\mathscr C$ is dependent. 

These works recently led to structural and combinatorial characterizations of stability and dependence of hereditary classes of graphs \cite{covers,svm2024,dreier2024flipbreakability,Bonnet2025}.
Of particular interest, for example, is the characterization of  hereditary classes of graphs with dependent ordered expansions, which are exactly the classes of graphs with bounded twin-width \cite{Tww_ordered}.

Despite all these characterization theorems, the theory lacks a description of how dependent hereditary classes of graphs can be generated from simple dependent classes, and whether they can be put in a normal form.

For hereditary classes of graphs, the notions of stability and dependence are equivalent to their monadic forms \cite{braunfeld2022existential}. It follows that, in this context, stability and dependence can be defined using the (model-theoretic) notion of (first-order) transduction, a way to
logically interpret classes of structures from other colored structures (cf \zcref{sec:trans} for a formal definition). 
Precisely, a hereditary class of graphs $\mathscr C$ is \emph{dependent} (resp. \emph{stable}) if no transduction of $\mathscr C$ contains all finite graphs (resp. all half-graphs\footnote{The half-graph of size $k$ is the bipartite graph with vertices $a_1,\ldots,a_k,b_1,\ldots,b_k$ such that $a_i$ and $b_j$ share an edge if and only if $i\le j$.}).
In particular, if a hereditary class $\mathscr D$ is a transduction of a dependent (resp. stable) hereditary class $\mathscr C$, then so is $\mathscr D$.
Therefore, it seems natural to seek for normal forms based on the existence of a transduction from well-understood classes.
In this context, the next conjecture has been proposed to characterize stable hereditary classes of graphs. 

\begin{conjecture}[{\cite[Conjecture 6.1]{RW_SODA}}]
	\label{conj:ND}
	A hereditary class of graphs is stable if and only if it is a transduction of a nowhere dense class of graphs.
\end{conjecture}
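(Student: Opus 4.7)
The plan proceeds in two directions. The easy direction asserts that every transduction of a nowhere dense class is stable: this is a direct combination of two classical facts, namely that nowhere dense classes are monadically stable \cite{Adler2013}, and that stability of hereditary classes is preserved under first-order transductions (essentially by definition, since stability is witnessed by the non-existence of a transduction producing all half-graphs).

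For the hard direction, the task is: given a stable hereditary class $\mathscr{C}$, exhibit a nowhere dense class $\mathscr{D}$ and a transduction $T$ with $\mathscr{C} \subseteq T(\mathscr{D})$. The starting point is that stability coincides with monadic stability on hereditary classes \cite{braunfeld2022existential}, which unlocks the recent combinatorial characterizations of monadically stable classes, in particular flip-flatness \cite{dreier2024flipbreakability}. Informally, flip-flatness provides, for every radius~$r$, a uniformly bounded family of vertex-colorings and flip operations after which all radius-$r$ balls have bounded size. The natural approach is to iterate this across all radii so as to produce a layered tree-like host structure whose internal nodes encode the flip partitions at each scale and whose leaves carry the original vertices, and then to verify that the resulting host structure is nowhere dense.

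The main obstacle lies in showing that such an encoding can indeed be realized inside a nowhere dense class, and not merely inside a sparse or $K_t$-free class: at every depth one must rule out dense bipartite patterns between consecutive layers, which amounts to a subtle control of how flip classes at different radii interact. Even the simplest non-trivial case, in which bounded shrubdepth is realized as a transduction of bounded treedepth \cite{covers}, is highly delicate, and tackling general monadically stable classes appears to require substantially new ideas. A plausible intermediate plan is to proceed through the existing structural hierarchy---bounded shrubdepth, then classes of bounded linear cliquewidth or bounded cliquewidth, then structurally nowhere dense classes---using at each level a decomposition theorem of the flavor developed in the present paper, where, in the stable case, the ordered component degenerates and the remaining sparse component furnishes the desired nowhere dense cover. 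Reaching the last stratum is, to my knowledge, the genuinely hard step and remains open.
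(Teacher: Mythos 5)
The statement you were asked about is not a theorem of the paper: it is \zcref{conj:ND}, quoted verbatim from \cite[Conjecture 6.1]{RW_SODA}, and the paper contains no proof of it --- it is used only as a motivating open problem (the paper even notes that \zcref{conj:ND} together with \zcref{conj:inverse} would imply \zcref{conj:main}). So there is no ``paper proof'' to compare against, and no complete proof should be expected.

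Within that frame, your assessment is essentially accurate. Your easy direction is correct and complete as stated: nowhere dense classes are monadically stable, monadic stability is preserved under transductions (a transduction of a transduction is a transduction, so producing all half-graphs from the image would produce them from the source), and stability coincides with monadic stability on hereditary classes by \cite{braunfeld2022existential}. For the hard direction, however, what you offer is a research plan, not a proof, and you are right to flag it as such. The concrete gap is exactly where you locate it: flip-flatness \cite{dreier2024flipbreakability} controls each radius $r$ separately, with the number of flips depending on $r$, and there is no known way to coherently assemble the flip data across all radii into a \emph{single} host structure, let alone one lying in a nowhere dense class --- verifying nowhere density of the layered encoding would require excluding dense subdivided patterns at every depth simultaneously, which is precisely the open content of the conjecture (equivalently, that every stable hereditary class is \emph{structurally} nowhere dense). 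Your intermediate strata are correctly calibrated: the shrubdepth/treedepth case and the stable bounded linear cliquewidth case (transduction-equivalence with bounded pathwidth, quoted in the paper from \cite{SODA_msrw,msrw}) are indeed known, and they match the pattern you describe where the ordered component degenerates in the stable setting; but nothing in the present paper's machinery (sobs, splits, amalgams, anchors) addresses classes beyond bounded-size bounded linear cliquewidth decompositions, so the final stratum remains genuinely open, as you say.
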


On the other hand, within the realm of dependent hereditary classes of graphs, stable ones have a very simple combinatorial characterization:
A dependent hereditary class of graphs is stable if and only if the graphs in the class excludes some fixed half-graph as a semi-induced subgraph \cite{RW_SODA}. As such, one would want to decompose the graphs of a dependent class into a stable part and a part consisting of the gluing of half-graphs. While the gluing may be simple in the easier cases, one expects some branching structure for general dependent classes. Quoting Shelah in~\cite{shelah2015dependent}:``Our thesis is that the picture of dependent theory is the combination of the one for stable theories and the one for the theory of dense linear order or trees (\dots)''.

Instead of simply decomposing the graph structure itself, we are seeking  decompositions that are, in a sense, logically equivalent to the graphs they describe. 
That is why  we introduce the following concept of \emph{modelization}.
A modelization of a class of graphs $\mathscr C$ in a class of models $\mathscr M$ is given by an
interpretation $\interp{I}$ of $\mathscr C$ in $\mathscr M$ and, conversely, a modelization transduction $\interp{T}$ from $\mathscr C$ to $\mathscr M$, such that $\interp{I}\circ \interp{T} = \interp{Id}$ (with the specific requirement that the vertex set of every graph should be traceable in its models; see \zcref{def:modelization}). This notion is stronger than the one of \emph{transduction pairing}, which is based on encoding/decoding possibilities, and the one of \emph{transduction equivalence}, which is at class level.

As the structure of general dependent hereditary classes of graphs is far too complex, we initiate our study by focusing on the first non-trivial example, which is the case of classes with bounded linear cliquewidth, before extending it to classes admitting bounded-size bounded linear cliquewidth decompositions.
The logical properties of graphs of bounded linear cliquewidth has been an active field of
study in the last 30 years, with a recent interest in defining their decompositions by
transductions. It was first proven in \cite{courcelle1995logical} that classes
of bounded linear cliquewidth are {\sf MSO}-transductions of paths,
or equivalently \cite{Colcombet2007}, {\sf FO}-transductions of linear orders.
In the other direction, it was shown in \cite{bojanczyk_definable_2021} that
graphs with bounded linear cliquewidth could {\sf MSO}-transduce bounded
width clique decompositions of themselves, already making use of Simon's
factorization theorem \cite{simon1990factorization}. The problem of transducing tree-like decompositions using {\sf MSO} logic or extensions of
it was also considered in \cite{campbell_recognisability_2025, campbell2025role, campbell_cmso-transducing_2025}. When considering the
more restrictive {\sf FO} logic, the problem becomes more difficult, since
half-graphs replace paths as the basic unstable structure, and are tougher
to handle. A first step was achieved in the stable setting in \cite{SODA_msrw,msrw}, where it was proved that every stable class with bounded linear cliquewidth admits a {\sf FO}-transduction pairing with a class with bounded pathwidth. %

\subsection{Our results}

A central tool in our study are bounded-size decompositions on a given base class.
The notions of bounded-size and quasibounded-size decompositions with bounded treedepth base classes have been introduced by Ne{\v s}et{\v r}il and the second author years ago and are central to the structural theory of graph sparsity, providing a characterization of both classes with bounded expansions and nowhere dense classes  \cite{POMNI,ND_characterization} and, more recently, of monadically stable classes of graphs \cite{covers}. 
First, we make use of bounded size decompositions results proved in \cite{SODA_msrw,msrw} as an initial step toward the proof of our characterization result of classes with bounded linear cliquewidth.
This strong generalization of the results proved in \cite{SODA_msrw} shows that each class with bounded linear cliquewidth has a modelization in a class of structures composed of a very simple partial order (union of chains) and a stable edge relation (defining a graph with bounded pathwidth) on the same domain. (We call \emph{coupling} such classes of composites when they are monadically dependent.)

Toward this end, we first analyze our base classes (which we call \emph{sob}, for shallow ordered bicographs) and prove a decomposition result for them.

\begin{restatable*}{theorem}{ThmSobs}
\label{thm:sobs}
    Let $\mathscr C$ be a class of sobs of bounded height. Then 
    $\mathscr C$ 
    has a modelization in a class
    $\mathscr D$, which is a coupling of a class of trees
    of bounded height and a class of posets formed by
    unions of chains.
\end{restatable*}

Then, by using a gluing technique, we deduce the following decomposition and  characterization theorem for classes with bounded linear cliquewidth.

\begin{restatable*}{theorem}{thmMain}
	\label{th:main}
	Every class of graphs with bounded linear cliquewidth has a modelization in a coupling  of a class of colored graphs with bounded pathwidth  and a class of  disjoint unions of linear orders.
\end{restatable*}

Moreover, we prove that the poset and the sparse graph forming the decomposition can be merged into a single poset with sparse cover graph.

\begin{restatable*}{theorem}{lcworders}
	\label{lcw_pairing_poset}
	A class of graphs $\mathscr C$ has bounded linear cliquewidth if and only if it has
	a modelization in a class of colored posets whose cover graphs have bounded pathwidth.
\end{restatable*}

Actually, we conjecture that this characterization of classes with bounded linear cliquewidth  extends in a natural way to a characterization of classes with bounded cliquewidth.

\begin{restatable*}{conjecture}{conjcw}
\label{conj:cw}
    A class of graphs $\mathscr C$ has bounded cliquewidth if and only if it has
	a modelization in a class of colored posets whose cover graphs have bounded treewidth.
\end{restatable*}

Having proved these characterizations of classes with bounded linear cliquewidth, we consider the more general setting of classes admitting bounded-size bounded linear clique width decompositions. These classes are linearly $\chi$-bounded (as a consequence of the linear $\chi$-boundedness of classes with bounded linear cliquewidth \cite{SODA_msrw,msrw}) and 
encompass the classes with bounded linear cliquewidth, the transductions of classes with bounded expansion, as well as, for example, the class of proper circular arc graphs. 
Though it has been proved that these classes are monadically dependent \cite{covers}, it is 
not known whether the property of admitting bounded-size bounded linear cliquewidth decompositions is preserved by transductions, which makes their study  quite challenging.
Despite this, we were able to extend our decomposition  to all the classes admitting bounded-size bounded linear cliquewidth decompositions. This is the main result of our paper.

\begin{restatable*}{theorem}{thmStrong}
    \label{strong_thm}
	Assume a hereditary class $\mathscr C$ admits bounded-size bounded linear cliquewidth decompositions. Then, it has a modelization in a coupling $\mathscr D$ of a class of graphs with bounded expansion and a class of  disjoint unions of linear orders.
    
    Moreover, the class $\mathscr D$ also admits bounded-size bounded linear cliquewidth decompositions.
\end{restatable*}

Motivated by our results, we conjecture that this theorem can be broadly generalized to a general decomposition result for all dependent (unstable)
classes of graphs (See discussion in \zcref{sec:conc}).

\begin{restatable*}{conjecture}{conjMain}
	\label{conj:inverse}
    Every  monadically dependent class $\mathscr C$ of graphs has a modelization in a coupling of 
	  a monadically stable class of binary structures and a class of posets having  bounded treewidth cover graphs.
\end{restatable*}

\subsection{Overview of the paper}
This paper is structured as follows: 
\begin{itemize}
    \item 
\zcref{sec:prelim} reviews the preliminary notions used in the rest of the paper and introduce the notions of coupling and modelization.
\item 
In \zcref{sec:trees}, we introduce %
the notion of semi-plane rooted tree, on which our models for tame unstable graph
classes are built and give, as a direct application, a modelization of shallow cographs in shallow cotrees.
\item 
In \zcref{sec:sob}, we introduce
the notion of \emph{sob} (shallow ordered bicograph) and prove the core modelization result (\zcref{thm:sobs}).
\item 
This result is lifted in \zcref{sec:gluing} to classes 
with bounded linear cliquewidth (\zcref{th:main,lcw_pairing_poset}),
using bounded-size decompositions (on the base class of cographs and sobs).
\item 
\zcref{th:main} is further extended in \zcref{sec:lcw_dec} to classes admitting bounded-size bounded linear cliquewidth decomposition (\zcref{strong_thm}); the main part of the section is devoted to the proof that the constructed coupling also admits bounded-size bounded linear cliquewidth decompositions, drawing inspiration from \cite{SBE_TOCL}, where it is proved that transduction of classes with bounded expansion admit bounded-size bounded shrubdepth decompositions.
\item 
In \zcref{sec:conc}, we conclude with some remarks and conjectures.
\end{itemize}

\section{Preliminaries}\label{sec:prelim}

\subsection{Basic model theory} 
A {\em{relational signature}} $\sigma$ is a
finite set of
relation symbols each with an arity.
 A \emph{$\sigma$-structure} $\mathbf M$  consists of a finite 
universe $M$ and an interpretation of  the symbols from the
signature: each $k$-ary relation symbol $U\in \sigma$ is 
interpreted as a subset $U(\mathbf M)$ of $M^k$.  
If $\mathbf M$ is a relational structure and $X\subseteq M$,
then we define the \emph{substructure} 
$\mathbf M[X]$ of $\mathbf M$
\emph{induced by} $X$ in the usual way.
We may view unary relation symbols as defining subsets of the domain, and write $x\in A$ for
$A(x)$.

For a signature $\sigma$, we consider the standard first-order logic over $\sigma$, which we denote by $\mathsf{FO}[\sigma]$ (or $\mathsf{FO}$ when $\sigma$ is clear from the context). For a formula $\varphi\in\mathsf{FO}[\sigma]$ with $k$ free variables and a $\sigma$-structure $\mathbf M$, we define
\[
\varphi(\mathbf M)=\{(v_1,\dots,v_k)\in M^k\colon \mathbf M\models \varphi(v_1,\dots,v_k)\}.
\]

\subsection{Partial orders}

A \emph{strict partial order} is an  irreflexive, asymmetric, and transitive binary relation.
A \emph{partially ordered set} (or \emph{poset}) is a pair $(X,<)$
where $X$ is a set and $<$ is a strict partial order.
A \emph{linear order} is a partial order in which every two elements are comparable.

Let $(X,<)$ be a poset.
A subset $A$ of $X$ is an \emph{antichain} (resp. a \emph{chain}) if every two distinct elements are incomparable (resp. comparable).
The \emph{downset} \emph{generated} by $a\in X$ is
$\down a:=\{x\in X\colon x\preceq a\}$. By extension, 
if $A$ is a subset of $X$, then $\down{A}:=\bigcup_{a\in A}\down{a}$.

A pair $(a,b)$ of elements of a poset $P$ is a \emph{cover} of $P$ if $a<_P b$ and there exists no element $c$ with $a<_P c<_P b$. 
The cover relation is, as usual, denoted by appending a `:' to the partial order symbol 
($<:$ for $<$, or $\prec:$ for $\prec$).
Hence, $x<:y$ if $(x,y)$ is a cover.
The \emph{cover graph} of a poset $P$ is the graph with vertex set $P$, where two elements $a$ and $b$ are adjacent if  $(a,b)$ or $(b,a)$ is a cover of $P$.

A finite partial order $\prec$ is a \emph{tree-order} if 
	it has a (unique) minimum and for every element $v$, 
    the set $\down{v}$ is a  chain.
Note that a finite rooted tree $(T,r)$ defines a tree-order, which is the partial ordering on the vertices of $T$   with $u \preceq v$  if and only if the unique path from the root to $v$ passes through $u$.
Every tree-order is a \emph{meet-semilattice}, meaning that every two vertices $x,y$ of $T$ have a greatest lower bound $x\curlywedge y$, which is called the \emph{infimum} of $x$ and $y$.

\subsection{Graphs and colored graphs}  
Directed graphs can be viewed as finite structures over the 
signature consisting of a binary relation symbol~$E$, interpreted 
as the arc relation, in the usual way. If $E$ is interpreted by
a symmetric relation, which is then the edge (or adjacency) relation, then the structure represents
a (simple, undirected, and loopless) graph. 
When we speak of a graph, we mean an simple undirected loopless graph.
For the sake of clarity, we denote by $V(G)$ the domain of a graph $G$, that is its vertex set.

For a finite label set $\Lambda$, by a {\em{$\Lambda$-colored}}
graph we mean a graph enriched by a unary predicate~\(U_\lambda\)
for every $\lambda\in \Lambda$. In this paper, by \emph{partially ordered graph} we mean
a graph equipped with a partial order on its set of vertices.

We say that $H$ is a {\em subgraph} of a graph $G$ and write $H\subseteq G$ if $V(H)\subseteq V(G)$ and $E(H)\subseteq E(G)$.
We say that it is and {\em induced subgraph} and write $H\subseteq_i G$ or $H=G[V(H)]$ if $E(H) = E(G)\cap (V(H)\times V(H))$.
A bipartite graph $H$ with parts $A$ and $B$ is a {\em semi-induced subgraph} of a graph $G$ if $E(H) = E(G)\cap(A\times B\cup B\times A)$, and we shall write $H=G[A,B]$. The \emph{bipartite complement}
$\overline{G}^{\mbox{\tiny bip}}$ of a bipartite graph $G$ with parts $A$ and $B$ is obtained from $G$
by complementing the edge relation between $A$ and $B$.

A class is called \emph{weakly sparse} (or \emph{biclique-free}) if there is an integer $t$ such that
none of its graphs contain a $K_{t,t}$ as a subgraph.
A class of graphs $\mathscr C$ is {\em degenerate} if there is an integer $d$ such that 
every subgraph of a graph in $\mathscr C$ has
 minimum degree at most $d$.

 A \emph{cograph} (or complement-reducible graph)
 is a graph that can be obtained from single-vertex graphs by disjoint unions and complete joins.
 Hence, a cograph $G$ can be described by a rooted tree (called \emph{cotree}) with internal vertices labeled $U$ (union) and $J$ (join), whose leaves are the vertices of $G$, where two vertices of $G$ are adjacent if and only if their lowest common ancestor in the cotree is labeled by $J$. 
 Note that this representation is unique if we requite the labels $U$ and $J$ to alternate on every branch of the cotree.
 Also note that cographs are characterized by the property of excluding the path $P_4$
on $4$ vertices as an induced subgraph.

\subsection{Transductions} 
\label{sec:trans}
A \emph{transduction} maps a $\sigma$-structure 
to a set of $\sigma'$-structures. 
It is defined as a composition of 
the following types of atomic operations:
\begin{itemize}
	\item \textbf{Copying.} A \emph{$k$-copying} is an operation that
	given a structure~$\mathbf{M}$ outputs a disjoint union of $k$ copies of
	$\mathbf{M}$ extended with a symmetric binary relation which connects each vertex with its copies (which we call \emph{clones}).
    The atomic $k$-copying operation is noted $\interp{C}_k$.
	\item \textbf{Coloring.} A \emph{coloring} extends the 
	signature $\sigma$ by a new unary predicate 
	$U\not\in \sigma$. The output of a coloring applied 
	to a $\sigma$-structure $\mathbf M$ is the set of all 
	$\sigma\cup\{U\}$-structures whose restriction to 
	$\sigma$ is $\mathbf M$, that is, we interpret the unary
	predicate in all possible ways.  We sometimes speak of a
	\emph{monadic expansion} for a subclass of a coloring. 
	\item \textbf{Interpreting.} A \emph{(simple) interpretation} $\interp{I}$
	of $\sigma'$-structures in $\sigma$-structures is defined by 
	a formula $\rho_U\in\mathsf{FO}[\sigma]$ with $k$ free variables for each relation symbol $U\in\sigma'$ with arity $k$ and a formula $\nu(x)$ with a single free variable.
	Given a $\sigma$-structure $\mathbf{M}$, the $\sigma'$-structure
	$\interp{I}(\mathbf M)$ has universe $\nu(\mathbf M)$ and, for each $U\in\sigma'$, we define $U(\mathsf{I}(\mathbf M))=\rho_U(\mathbf M)$.
    The atomic interpretation expanding by a relation $U$ defined by $\rho$ is noted $\interp{Rel}^{\rho
\to U}$.
\end{itemize}

We take time for some comments about interpretations. First, it is useful to agree an interpretation does not modify a relation $U$ if no formula $\rho_U$ is associated to $U$. 
(In other words, if no formula is associated to some  $U\in\sigma\cap\sigma'$, then we implicitly define $\rho_U(\bar x):=U(\bar x)$.)
Because of this, an interpretation $\interp{I}$ of $\sigma'$-structures in $\sigma$-structures uniquely extends as an interpretation of 
$\tau'$-structures in $\tau$-structures, for 
$\tau\supseteq\sigma$ and  $\tau'\subseteq \sigma'\cup\tau$ (where formulas defining symbols in $\sigma'\setminus\tau'$ are ignored). In particular, when defining an interpretation, it will be sufficient to consider the signatures $\sigma$ and $\sigma'$, where $\sigma$ is the set of all the relations used in some formula of the interpretation.

\emph{Transductions} are defined as compositions of  atomic transductions. As usual, we denote by $\interp{J}\circ\interp{I}$ the composition of two 
transductions $\interp{I}$ and $\interp{J}$, that is the mapping defined by
$\interp{J}\circ\interp{I}(\mathbf M)= \bigcup\{\interp{J}(\mathbf N)\colon \mathbf N\in\interp{I}(\mathbf M)\}$.
A fundamental property of transductions it that they can be rewritten as the composition of the copying operation, a coloring operation, and an interpretation (in this order) \cite{SBE_TOCL}. Let $\mathsf T$ be a transduction from $\sigma$-structures to $\sigma'$-structures.
For every $\sigma$-structure $\mathbf M$, the domain of a structure $\mathbf N\in\interp{T}(G)$ is naturally partitioned into equivalence classes, each class corresponding to a set of clones of an element of $\mathbf M$. %

As for interpretations, a transduction from $\sigma$-structures to $\sigma'$-structures
uniquely extends to a transduction from $\tau$-structures to $\tau'$-structures whenever $\tau\supseteq \sigma$ and $\tau'\subseteq \sigma'\cup\tau$. However, because of the coloring, we also allow $\tau'$ containing further unary relations, which are (by default) defined by a coloring operation. 

As a particular case, let us mention the transduction  from $\sigma$-structures to $\sigma'$-structures where $\sigma'\setminus\sigma$ is a set of unary relations, which defines all the undefined unary relations by a coloring operation, then
forgets all the relations in  $\sigma\setminus\sigma'$. In the case where $\sigma'=\sigma$, we will speak about the \emph{identity transduction} and denote it by $\interp{Id}$. In the case where $\sigma'\subset\sigma$, we shall call this  transduction a \emph{reduct transduction} and the image of a $\sigma$-structure $\mathbf M$ by this transduction is called the \emph{$\sigma'$-reduct} of $\mathbf M$.
Of particular interest is the 
\emph{hereditary closure transduction} $\mathsf{Her}$, which maps a relational structure
$\mathbf M$ to the set of all its induced substructures.

We say that a class $\mathscr D$ is a \emph{transduction} of a class $\mathscr C$ (or that the class $\mathscr C$ \emph{transduces} the class $\mathscr D$) if $\mathscr D\subseteq\interp{T}(\mathscr C)$ for some transduction~$\interp T$.
Two classes $\mathscr C$ and $\mathscr D$ are \emph{transduction-equivalent} if each of the classes $\mathscr C$ and $\mathscr D$ is a transduction of the other. 
Stronger than a transduction-equivalence, a \emph{transduction pairing} of two classes~$\mathscr C$
and~$\mathscr D$ is a pair $(\mathsf D,\mathsf C)$ of  transductions,
such that 
\[
\forall\mathbf A\in\mathscr C\quad\exists \mathbf B\in\mathsf D(\mathbf A)\cap\mathscr D\quad \mathbf A\in\mathsf C(\mathbf B),\quad\text{and}\quad
\forall\mathbf B\in\mathscr D\quad\exists \mathbf A\in\mathsf C(\mathbf B)\cap\mathscr C\quad \mathbf B\in\mathsf D(\mathbf A).
\]

\medskip 

\subsection{Stability and dependence}

Dependence and stability are notions from model theory, respectively expressing the
non-definability of  finite sets and of finite linear
orders.
Baldwin and Shelah~\cite{BS1985monadic} introduced the notion of \emph{monadic dependence} (resp. of \emph{monadic stability}),
where one requires each monadic expansion to be dependent (resp. stable), and proved (using a different terminology) that they
can be characterized using transductions as follows.

\begin{theorem}[\cite{BS1985monadic}] A class of structures is
    \begin{itemize}
        \item monadically dependent if and only if it does not
    transduce the class of all graphs.

        \item monadically stable if and only if it does not transduce
        the class of all linear orders.
    \end{itemize}    
\end{theorem}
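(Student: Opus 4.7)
The plan is to address each of the two equivalences; the monadic stability case is entirely parallel to the dependence one, with the independence property ({\sf IP}) replaced by the order property ({\sf OP}) and the class $\mathscr{G}$ of all finite graphs replaced by the class of all finite linear orders. In both cases, the forward implication is the ``soundness'' side and the converse is the construction that turns a failure of monadic dependence (resp.\ stability) into a transduction onto $\mathscr{G}$ (resp.\ onto the class of finite linear orders).

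For the forward direction I would argue contrapositively. Suppose a transduction $\interp{T}$ applied to $\mathscr{C}$ covers $\mathscr{G}$, and decompose $\interp{T} = \interp{I}\circ\interp{E}\circ\interp{C}_k$ as recalled in \zcref{sec:trans}. The edge formula $E(x,y)$ has IP on $\mathscr{G}$, so pulling it back through the interpretation $\interp{I}$ by substituting the defining formulas $\rho_U$ yields a first-order formula $\varphi$ with IP on $\interp{E}\circ\interp{C}_k(\mathscr{C})$. The coloring step $\interp{E}$ is absorbed by the very definition of monadic dependence, since a witness of IP in a monadic expansion is exactly what monadic dependence forbids. To absorb the copying $\interp{C}_k$, I would apply a pigeonhole on the coordinates of the IP witnesses to restrict to tuples whose coordinates follow a fixed ``clone pattern'' in $\{1,\dots,k\}$; this pattern can be recorded by finitely many fresh unary predicates on $\mathbf{M}$ itself, producing a formula with IP on some monadic expansion of $\mathscr{C}$ and contradicting monadic dependence.

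For the converse, fix a formula $\varphi(\bar x,\bar y)$ witnessing IP in some monadic expansion $\mathscr{C}^+$ of $\mathscr{C}$. The aim is to exhibit a transduction $\interp{T}$ with $\mathscr{G}\subseteq\interp{T}(\mathscr{C})$. Given a target graph $G=(V,E)$, the IP property applied inside a sufficiently large $\mathbf{M}^+\in\mathscr{C}^+$ yields tuples $\bar a_i$ for $i\in V$ and $\bar b_j$ for $j\in V$ such that $\varphi(\bar a_i,\bar b_j)\iff ij\in E$. A $2$-copying $\interp{C}_2$ creates the two roles of $\bar a$-tuples and $\bar b$-tuples, a coloring marks the distinguished tuples, and a final interpretation reads off the symmetric adjacency of $G$ from $\varphi$. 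The composite transduction, applied to $\mathscr{C}$, produces every finite $G$.

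The main obstacle is the extraction step in the converse: the tuples $\bar a_i,\bar b_j$ must have sufficiently uniform combinatorial structure (pairwise disjoint coordinate supports, uniform behavior with respect to the added colors) so that a single formula and a single transduction scheme encode every target $G$, rather than requiring a graph-dependent formula. This uniformity is precisely what the classical Ramsey-type arguments of Shelah for IP provide, via extraction of an indiscernible sequence from any IP-witness, after which the coding becomes purely combinatorial. The same extraction, applied to an OP-witness, produces a linearly ordered indiscernible sequence from which any fixed finite linear order can be read off by an interpretation on a monadic expansion of $\mathscr{C}$, yielding the stability half of the theorem.
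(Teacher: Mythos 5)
The paper does not prove this statement at all---it is imported verbatim from Baldwin and Shelah \cite{BS1985monadic}---so your attempt has to stand on its own against the known proofs. Your forward direction is essentially the standard closure argument and is sound: compose the interpretation with the edge (resp.\ order) formula to pull IP (resp.\ OP) back through $\mathsf{I}$, absorb the coloring step into the monadic expansion by definition, and eliminate the $k$-copying by translating each unary predicate of the copied structure into $k$ unary predicates on the original and pigeonholing on the clone indices of the witnesses. No objection there.

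The converse, however, has a genuine gap, and it sits exactly where the depth of Baldwin--Shelah lies. A failure of monadic dependence gives a formula $\varphi(\bar x,\bar y)$ with IP in a monadic expansion, where $\bar x$ and $\bar y$ are \emph{tuples}; but in the transduction formalism of this paper the universe of the output is cut out by a formula $\nu(x)$ with a single free variable, so every vertex of the target graph must be represented by a single element (clone). Your plan---``a coloring marks the distinguished tuples, and a final interpretation reads off the adjacency''---does not work as stated: unary predicates can only mark \emph{sets} of elements; they cannot record which coordinates belong together as one tuple $\bar a_i$, nor tie the chosen representative of $\bar a_i$ to its remaining coordinates, so the final interpretation has no means of evaluating $\varphi(\bar a_i,\bar b_j)$ from two singletons plus finitely many colors. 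Invoking Ramsey/indiscernibility does not repair this: indiscernibility makes the witnesses uniform but does not make the block structure of the tuples definable, and in a class of \emph{finite} structures there is no compactness to extract indiscernibles in the first place, only quantitative finite Ramsey. The missing idea is Baldwin--Shelah's arity reduction: an induction on $|\bar x|+|\bar y|$, through their analysis of coding configurations, showing that if some monadic expansion has IP (resp.\ OP) witnessed by tuples, then some monadic expansion already has IP (resp.\ OP) witnessed by \emph{singletons}. That reduction is the technical core of the theorem, and it is the step your outline replaces by a hand-wave. Once singleton witnesses are available, the rest of your coding is indeed routine, modulo two minor points you should still make explicit: transducing all bipartite graphs suffices because the class of all graphs is a transduction of the class of all bipartite graphs, and, in the stability case, one must check that half-graph configurations on singletons let an interpretation produce a genuine (transitive, total) order relation.
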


In the case of hereditary classes of structures, it was shown \cite{braunfeld2022existential}
that dependence and stability are equivalent to their monadic counterparts. As such,
we will use the adjective monadic only when needed, emphasizing that a class of structures
is not assumed to be hereditary.

\subsection{Width parameters}

We omit the original definitions of the classical parameters treewidth, pathwidth, cliquewidth and linear cliquewidth,
as we shall only need the following  model theoretical characterizations of the boundedness of these parameters.

\begin{theorem}[\cite{Gurski2000}]
	\label{fact:ws}
A class has bounded pathwidth if and only if it is weakly sparse and has bounded linear cliquewidth;
a class has bounded treewidth if and only if it is weakly sparse and has bounded  cliquewidth.
\end{theorem}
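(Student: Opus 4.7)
The plan is to prove the two biconditionals in parallel. The forward implications follow from classical inequalities between width parameters and the observation that bicliques have large treewidth; the backward implications extract a (path) tree decomposition of bounded width from a (linear) $k$-expression, exploiting biclique-freeness to control bag size.

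For the forward direction, I would invoke the standard inequalities $\mathrm{lcw}(G) \leq \mathrm{pw}(G) + O(1)$ and $\mathrm{cw}(G) \leq 2^{\mathrm{tw}(G)+O(1)}$ (Courcelle--Olariu), so that bounded (linear) pathwidth entails bounded (linear) cliquewidth. Weak sparsity follows from $\mathrm{tw}(K_{t,t}) = t$ together with the monotonicity of treewidth under taking subgraphs: a graph of treewidth at most $k$ contains no $K_{k+1,k+1}$ as a subgraph.

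For the backward direction I focus on the pathwidth / linear cliquewidth case; the treewidth case is analogous, with the linear expression replaced by a tree-shaped one and the path decomposition by a tree decomposition. Starting from a graph $G\in\mathscr{C}$ with a linear $k$-expression $\phi_1,\ldots,\phi_N$, I would build a path decomposition by sliding a window along the expression, maintaining at each step $s$ a bag $B_s$ that contains, for each of the $k$ labels $\ell$, a bounded set of ``active'' representatives of the current $\ell$-cohort---those vertices still needed as endpoints of edges created by forthcoming joins. The guiding invariant is that for every edge $uv$ of $G$, produced at some step $s^*$ by a join $\eta_{i,j}$ with $u$ in the $i$-cohort and $v$ in the $j$-cohort, both endpoints lie in $B_{s^*}$; once a vertex is inserted, it is removed only after every future join touching its label lineage has fired, so the membership interval of each vertex is contiguous.

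The main obstacle is bounding bag size. Weak sparsity is precisely the lever: at every join $\eta_{i,j}$, the smaller of the two cohorts has size at most $t-1$, for otherwise a $K_{t,t}$ would embed into $G$. A careful choice of representatives---keeping at most $t-1$ vertices per label per upcoming ``critical'' join, and pre-normalising $\phi$ so that each cohort occupies a contiguous segment of the expression before being retired via a relabel out of the set of labels that will appear in subsequent joins---yields bags of size bounded by a function of $k$ and $t$, hence bounded pathwidth. The same argument, applied node by node along the expression tree rather than along a linear expression, handles the cliquewidth / treewidth equivalence.
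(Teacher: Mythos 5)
Your forward direction is fine and matches the standard route (the paper itself gives no proof of this statement, which it imports from \cite{Gurski2000}, so the yardstick is the Gurski--Wanke argument): $\mathrm{lcw}\le \mathrm{pw}+O(1)$ and $\mathrm{cw}\le 2^{O(\mathrm{tw})}$, plus $\mathrm{tw}(K_{t,t})=t$ and subgraph-monotonicity of treewidth. The backward direction, however, has a genuine gap at exactly the point you gloss over with ``a careful choice of representatives''. Weak sparsity bounds only the \emph{smaller} class at a join $\eta_{i,j}$: the larger class $A$ may be arbitrarily big, and the join creates a new edge at \emph{every} vertex of $A$, so at step $s^*$ every vertex of $A$ is ``still needed as an endpoint of edges created by forthcoming joins''. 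Your invariant --- both endpoints of every edge created at step $s^*$ lie in $B_{s^*}$ --- therefore forces all of $A$ into the single bag $B_{s^*}$; no selection of at most $t-1$ representatives per label can satisfy it, so the invariant is unachievable with bounded bags. (A path decomposition only requires each edge to lie in \emph{some} bag, not in the bag at its creation step, and exploiting this slack is essential.)

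The missing idea --- the second, less obvious use of biclique-freeness in the known proof --- is this: once a label class has at least $t$ vertices, the union of the opposite sides over \emph{all} joins it ever participates in afterwards has at most $t-1$ vertices in total. Indeed, vertices never leave a class (relabelings merge whole classes), so those $t$ vertices share a label forever; if the accumulated future partners numbered $t$ or more, the edges created across the several joins would already embed $K_{t,t}$ as a subgraph. Consequently big classes are not represented in the bags by their members at all: one reserves room for their at most $t-1$ future partners, and realizes the edges of a big side by a short chain of bags enumerating its vertices one at a time, while small classes (at most $t-1$ members) are kept wholesale. This bookkeeping is what yields bags of size $O(kt)$ (Gurski and Wanke obtain treewidth at most $3k(t-1)-1$), in both the linear and the tree-shaped case. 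Finally, your pre-normalisation (``each cohort occupies a contiguous segment of the expression before being retired'') is not a valid normal form for $k$-expressions --- a class can be joined, lie dormant, and be joined again much later with other classes interleaved --- and the argument must not lean on it.
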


\begin{theorem}[\cite{SODA_msrw,msrw}]
	Every monadically stable class with bounded linear cliquewidth is  transduction-equivalent to a class with bounded pathwidth.
\end{theorem}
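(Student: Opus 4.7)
The plan relies on Gurski's characterization (\zcref{fact:ws}): bounded pathwidth equals bounded linear cliquewidth plus weak sparsity. So it suffices to construct a weakly sparse class $\mathscr{D}$ of bounded linear cliquewidth that is transduction-equivalent to $\mathscr{C}$; such a $\mathscr{D}$ will automatically have bounded pathwidth. My target is therefore a ``sparsified'' version of the linear cliquewidth expression tree of graphs in $\mathscr{C}$.

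First I would recover, as a transduction of $G \in \mathscr{C}$, a colored ordered expansion $G^+$ encoding a linear cliquewidth expression: the linear order $<$ is the order of vertex introduction, and unary predicates record the label each vertex carries at the successive add-edge operations. This is possible because every bounded linear cliquewidth class is an $\mathsf{FO}$-transduction of linear orders, and $G$ is in turn $\mathsf{FO}$-interpretable from $G^+$, since the adjacency $u \sim v$ is determined by the labels of $u$ and $v$ at the last add-edge operation performed between them.

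The heart of the argument is to ``sparsify'' the dense bipartite layers produced by the add-edge operations. For each add-edge$(i,j)$ step, the resulting bipartite graph between the vertices currently carrying labels $i$ and $j$ is a semi-induced subgraph of $G$; by monadic stability, it contains no semi-induced half-graph larger than a fixed constant $h = h(\mathscr{C})$. Invoking a structure theorem for stable bipartite graphs equipped with a linear order on one side, each such layer is determined by a bounded collection of ``landmark'' vertices together with a bounded number of auxiliary markers. Replacing each layer by its sparse landmark gadget produces a weakly sparse graph $H$ with an additional bounded coloring, from which $G$ is recovered by an $\mathsf{FO}$ formula. Because the construction is carried out step by step along the cliquewidth expression, the resulting class $\mathscr{D}$ also has bounded linear cliquewidth, and by \zcref{fact:ws} it has bounded pathwidth.

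The main obstacle is the sparsification step: producing, uniformly across all add-edge layers, a bounded-width landmark encoding that respects both the order $<$ and the limited monadic budget of a linear cliquewidth decomposition. This is precisely where monadic stability is indispensable, as one must leverage the absence of long half-graphs to compress what would otherwise be unboundedly dense bipartite layers into a sparse structure without losing $\mathsf{FO}$-recoverability of the edges. In spirit, this combines a Shelah-style analysis of stable types along a linear order with the operational semantics of linear cliquewidth expressions, and it is the main technical content of \cite{SODA_msrw,msrw}.
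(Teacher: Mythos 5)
There is a genuine gap, and it is fatal to the very first step. You propose to recover, \emph{as a transduction of} $G\in\mathscr C$, an ordered expansion $G^+$ in which $<$ linearly orders all of $V(G)$ (the order of vertex introduction in a linear cliquewidth expression). If such a transduction existed, then composing it with a reduct transduction and the hereditary closure transduction would show that $\mathscr C$ transduces the class of all linear orders, which is precisely what monadic stability forbids by the Baldwin--Shelah characterization quoted in the preliminaries. You appear to have read the Courcelle-style equivalence backwards: ``$\mathscr C$ has bounded linear cliquewidth iff $\mathscr C$ is an $\mathsf{FO}$-transduction of the class of linear orders'' means that linear orders transduce $\mathscr C$, not that $\mathscr C$ can transduce an ordered copy of itself. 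So for any infinite monadically stable class the pipeline $G\mapsto G^+\mapsto H$ cannot even start, and indeed the entire difficulty of the theorem is to sparsify \emph{without} ever having the order of the expression available on the graph side.

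Two secondary points. First, even granting the order, your sparsification picture misidentifies where the density lives: in a linear cliquewidth expression each add-edge$(i,j)$ step inserts a \emph{complete} bipartite graph between the current label classes, so individual layers are trivial to describe; the half-graphs arise from the interaction of many joins along the order, and there is no ``landmark'' structure theorem of the kind you invoke for general stable bipartite graphs (bounded-degree bipartite graphs are stable yet admit no bounded landmark encoding). Second, the paper itself does not prove this statement --- it cites \cite{SODA_msrw,msrw} --- and the argument there runs along a different route: classes of bounded linear cliquewidth admit bounded-size decompositions into pieces of bounded \emph{embedded shrubdepth} (tree models some of whose nodes carry an order, exactly the T-models with C-nodes of Section 3 of this paper); monadic stability, via the exclusion of a fixed semi-induced half-graph, is then used to eliminate the ordered nodes from these tree models, yielding genuinely unordered shrubdepth-type models, which are finally sparsified to obtain a transduction pairing with a weakly sparse class of bounded linear cliquewidth --- whence bounded pathwidth by \zcref{fact:ws}. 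That last application of \zcref{fact:ws} is the one ingredient your proposal shares with the actual proof.
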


\begin{theorem}[\cite{RW_SODA}]
	Every monadically stable class with bounded cliquewidth is  transduction-equivalent to a class with bounded treewidth.
\end{theorem}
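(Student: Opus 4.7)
The easy direction—from bounded treewidth to monadically stable bounded cliquewidth—is immediate from \zcref{fact:ws} (bounded treewidth implies bounded cliquewidth) together with the fact that bounded treewidth classes have bounded expansion, hence are nowhere dense, hence monadically stable.

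For the converse direction, the strategy is to reduce to Gurski's characterization: it suffices to exhibit a class $\mathscr D$ that is transduction-equivalent to $\mathscr C$, weakly sparse, and of bounded cliquewidth, for then \zcref{fact:ws} forces $\mathscr D$ to have bounded treewidth. The natural candidate is obtained by, for each $G \in \mathscr C$, fixing an optimal clique-decomposition tree $T_G$ of width $k$ and taking $\mathscr D$ to be the class of these $T_G$ viewed as labeled trees (carrying the operation labels at internal nodes and the label of each leaf). Trees are weakly sparse, and labeled trees of bounded arity have bounded cliquewidth, so $\mathscr D$ will have the required properties.

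The transduction from $\mathscr D$ to $\mathscr C$ is routine: the adjacencies of $G$ are first-order definable from $T_G$ by testing the join operation at the lowest common ancestor of each pair of leaves. The genuine work lies in the reverse transduction, which requires $\mathscr C$ to transduce (a colored version of) $\mathscr D$. I plan to proceed analogously to the bounded linear cliquewidth case proved in \cite{SODA_msrw,msrw}, where the recovery of the underlying linear decomposition was achieved by constructing a canonical ordering accessible by a first-order interpretation after a monadic expansion. In the branching case, one instead seeks a canonical tree decomposition: we color the vertices of $G$ with a bounded number of labels (via the coloring step of the transduction) encoding their position in $T_G$, and then write down first-order formulas defining the tree-order together with the operation labels.

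The principal obstacle is this canonicalization of the clique-decomposition tree. The branching introduces choices absent from the linear case, and these must be resolved using monadic stability: the intended tool is a tree-analog of Simon's factorization theorem, as exploited in \cite{bojanczyk_definable_2021,campbell_recognisability_2025}, but applied in the stable setting, where the absence of large half-graphs as semi-induced subgraphs eliminates the ambiguity that would otherwise obstruct a first-order reconstruction. Once a canonical, first-order-definable tree representation is in place, the weakly sparse class $\mathscr D$ has bounded treewidth by \zcref{fact:ws}, and composing the two transductions yields the required transduction-equivalence.
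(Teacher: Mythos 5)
There is a genuine gap, and it sits exactly where you place your ``principal obstacle'': the entire content of the theorem is deferred to an unproven step. Your plan requires $\mathscr C$ to \textsf{FO}-transduce a canonical clique-decomposition tree of each of its members, invoking ``a tree-analog of Simon's factorization theorem \dots applied in the stable setting.'' No such \textsf{FO} result exists. Even in \textsf{MSO}, definability of branching clique decompositions is a delicate and only partially resolved problem --- this is precisely what \cite{campbell_recognisability_2025,campbell_cmso-transducing_2025} study, sometimes needing extensions of \textsf{MSO} --- and the passage from \textsf{MSO} to \textsf{FO} is where half-graphs replace paths as the obstruction, as noted in the introduction of this paper. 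Moreover, your description of the linear case is inaccurate in a way that matters: \cite{SODA_msrw,msrw} do \emph{not} recover the linear decomposition by a first-order interpretation after a monadic expansion. They cannot: a linear clique decomposition essentially carries a linear order on the whole vertex set, and a monadically stable class cannot transduce linear orders of unbounded length (compose with the hereditary closure transduction and you would transduce all linear orders, contradicting monadic stability). What those papers actually do is establish bounded-size covers by classes of bounded (embedded) shrubdepth and then sparsify, obtaining a transduction pairing with a class of bounded pathwidth without ever reconstructing the decomposition.

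The workable route --- the one taken in \cite{RW_SODA} and mirrored in this paper's linear arguments (compare the proof of \zcref{lcw_pairing_poset}) --- shows that your canonicalization step is also unnecessary. It suffices to produce \emph{any} transduction pairing of $\mathscr C$ with a weakly sparse class $\mathscr D$: since transductions compose and $\mathscr C$ has bounded cliquewidth (equivalently, is an \textsf{FO}-transduction of tree-orders), $\mathscr D$ automatically has bounded cliquewidth, and then \zcref{fact:ws} upgrades weak sparseness to bounded treewidth. So the real work is sparsification via covers and stability --- where stability is used quantitatively, through the structure theory of stable classes (cf.\ \cite{covers,SBE_TOCL}), not merely as ``absence of half-graphs eliminates ambiguity.'' Your target class of labeled decomposition trees is a fine witness \emph{if} the transduction existed (and, being trees, it has treewidth $1$ outright, so the second appeal to \zcref{fact:ws} is redundant), but as written the proposal proves only the routine direction $\mathscr D \to \mathscr C$ and leaves the theorem itself --- the direction $\mathscr C \to \mathscr D$ --- resting on a conjectural tool.
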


\begin{theorem}[\cite{courcelle1995logical}]
	For a class of graphs $\mathscr C$, the following are equivalent:
\begin{itemize}
	\item  $\mathscr C$ has bounded linear cliquewidth;
	\item $\mathscr C$ is an {\sf MSO}-transduction of the class of paths;
	\item $\mathscr C$ is an {\sf FO}-transduction of the class of linear orders.
\end{itemize}
\end{theorem}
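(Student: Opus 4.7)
The plan is to prove the cycle $(1)\Rightarrow(2)\Rightarrow(3)\Rightarrow(1)$, where $(1)$, $(2)$, $(3)$ denote the three listed conditions in order.

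For $(1)\Rightarrow(2)$, I would encode a linear clique-width expression directly as a colored path. A width-$k$ expression is a word over the finite alphabet whose letters are ``create a vertex with label $i$'', ``add all edges between labels $i$ and $j$'', and ``relabel $i$ as $j$'', for $i,j\in[k]$. Color the path of length equal to the expression so that each position carries its letter. The vertex set of $G$ is the set of create-positions; the current label of a created vertex at any later position is an {\sf MSO}-definable regular computation over the sequence of intervening relabelings. Adjacency of two create-positions $u,v$ is then the {\sf MSO} statement ``some add-edges letter occurring after both $u$ and $v$ bears the labels currently held by $u$ and $v$''. Composed with a coloring step that produces the path coloring, this realizes $G$ as an {\sf MSO}-transduction of the class of uncolored paths.

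For $(2)\Rightarrow(3)$, which is the substantial step (essentially Colcombet's theorem), the idea is to eliminate set quantifiers via Simon's factorization theorem. Each {\sf MSO} formula used by the given transduction defines a regular language on the colored path, recognized by a finite deterministic automaton with transition monoid $\mathbf M$. Simon's theorem provides, for every word, a factorization forest over $\mathbf M$ of height bounded by a function of $|\mathbf M|$; such a forest can be encoded as a monadic expansion of the underlying linear order, and within this expansion the value of the morphism on any interval becomes {\sf FO}-expressible by descending the bounded-height forest. Each {\sf MSO} formula thus reduces to an {\sf FO} formula on an appropriately colored linear order, and the whole transduction becomes an {\sf FO}-transduction of the class of linear orders.

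For $(3)\Rightarrow(1)$, I would argue compositionally. Fix a quantifier rank $q$ majorizing that of all formulas in the given {\sf FO}-transduction, and sweep the linear order $L$ from left to right. Maintain, for every already-processed element, its rank-$q$ {\sf FO}-type in the current colored prefix. The Feferman--Vaught composition theorem for ordered sums guarantees that when a new element is appended, each old element's type is determined by its previous type and the type of the new element alone. Because the number of such types is a finite constant, the update is realized by a bounded sequence of relabelings, and the edges incident to the new element by a bounded sequence of add-edges operations. This assembles $G$ as a linear clique-width expression of bounded width, as required.

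The main obstacle is clearly $(2)\Rightarrow(3)$: Colcombet's theorem, which leverages Simon's factorization to trade {\sf MSO} for {\sf FO} over a suitably colored linear order. The other two directions are essentially syntactic encodings of known compositional and automaton-theoretic facts, and by contrast are routine.
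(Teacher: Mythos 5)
The paper does not prove this statement: it is quoted from the literature, the {\sf MSO}-transduction-of-paths characterization being Courcelle's \cite{courcelle1995logical} and the {\sf FO}-transduction-of-linear-orders version being Colcombet's \cite{Colcombet2007}. Your outline reconstructs exactly those proofs --- the word-encoding of a linear clique-width expression with {\sf MSO}-definable label tracking for $(1)\Rightarrow(2)$, and Simon's factorization forests encoded as a monadic expansion of the order to collapse {\sf MSO} to {\sf FO} for $(2)\Rightarrow(3)$ --- so on these two steps there is nothing to compare beyond noting that you correctly identified $(2)\Rightarrow(3)$ as the deep step. One helpful remark: since a transduction of a class only requires $\mathscr C\subseteq \interp{T}(\text{linear orders})$, the nondeterministic coloring step may produce garbage on invalid forest-colorings without harm, so no {\sf FO}-verification of the Simon split is needed.

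There is, however, a genuine slip in $(3)\Rightarrow(1)$, the direction you call routine. The edge formula $\varphi(x,y)$ of the given {\sf FO}-transduction is evaluated in the \emph{whole} colored linear order $L$, so whether $E(u,v)$ holds is \emph{not} determined by the rank-$q$ types of $u$ and $v$ in the current prefix: it also depends on the suffix to the right of the sweep position. As written, your invariant (``maintain each processed element's rank-$q$ type in the current colored prefix'') does not license the add-edges steps. The repair is standard: by Feferman--Vaught for the ordered sum $L = L[\le v] + L[>v]$, the truth of $\varphi(u,v)$ is a function of $\operatorname{tp}_q(L[\le v],u,v)$ --- itself determined by the maintained prefix type of $u$ and the letter of $v$ --- together with $\operatorname{tp}_q(L[>v])$; since the expression is built offline with full knowledge of $L$, the suffix types can be precomputed, and the edge decision is uniform on each label class, so the width stays bounded. (Alternatively, one sidesteps the sweep entirely by observing that the order is {\sf MSO}-definable on a path with marked endpoints, so an {\sf FO}-transduction of linear orders is an {\sf MSO}-transduction of paths, and one invokes Courcelle's closure of bounded linear cliquewidth under {\sf MSO}-transductions.) You should also say a word about the copying and domain-restriction steps of the transduction, but these are handled by the same device.
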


\begin{theorem}[\cite{courcelle1995logical}]
	For a class of graphs $\mathscr C$, the following are equivalent:
\begin{itemize}
	\item  $\mathscr C$ has bounded cliquewidth;
	\item $\mathscr C$ is an {\sf MSO}-transduction of the class of trees;
	\item $\mathscr C$ is an {\sf FO}-transduction of the class of tree-orders.
\end{itemize}
\end{theorem}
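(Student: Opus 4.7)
The plan is to prove the three-way equivalence by the cycle (1) $\Rightarrow$ (2) $\Rightarrow$ (3) $\Rightarrow$ (1), relying on the classical automata-theoretic machinery for the hardest step.

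First, for (1) $\Rightarrow$ (2), I would exploit the clique expression itself. A graph $G$ of cliquewidth at most $k$ admits a clique expression, which is naturally a finite rooted tree whose internal nodes are labeled by the clique operations (disjoint union, recoloring $i \to j$, or edge-join between color classes $i$ and $j$) and whose leaves create singleton vertices with an initial color. The vertex set of $G$ is identified with the set of leaves, and the adjacency between two leaves $u, v$ is MSO-definable: walking up from $u \curlywedge v$, check that at some ancestor an edge-join is performed between two colors that agree with the colors held by $u$ and $v$ at that level, where the ``current color'' of a leaf below a node is determined by the sequence of recoloring nodes traversed. This yields an MSO-interpretation of $G$ in the labeled clique expression tree, and hence an MSO-transduction from a class of trees onto $\mathscr C$.

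Next, for (2) $\Rightarrow$ (3), I would convert an MSO-transduction of trees into an FO-transduction of tree-orders, using two standard simulations. First, the parent-child relation of a tree coincides with the cover relation of its tree-order, and the latter is FO-definable from the order; thus any MSO-transduction taking trees as input may equivalently be fed tree-orders, at the cost of an initial FO-interpretation to recover the parent-child edges. Second, after normalizing to copy-color-interpret form, each set quantifier $\exists X\,\varphi(X,\bar x)$ appearing in the MSO-interpretation formulas is replaced by a fresh unary predicate $U_X$ introduced by the coloring step, together with the FO formula $\varphi(U_X,\bar x)$; the existential non-determinism of coloring simulates the existential MSO quantifier, and universal set quantifiers are handled dually by pushing negations inwards into the formula structure. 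The outcome is an FO-transduction of tree-orders producing $\mathscr C$.

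Finally, for (3) $\Rightarrow$ (1), I would reduce to the classical non-trivial direction of Courcelle's theorem, namely that MSO-transductions of trees yield only bounded cliquewidth classes. Starting from an FO-transduction $\interp{T}$ of tree-orders, observe that the ancestor relation of a tree-order is MSO-definable in its underlying tree (as the reflexive-transitive closure of parent-child); hence every FO formula over the tree-order becomes an MSO formula over the tree, and $\interp{T}$ lifts to an MSO-transduction of trees, which by the classical result yields bounded cliquewidth. The main obstacle of the whole argument is precisely this classical direction: its proof goes through the Thatcher–Wright equivalence between MSO and tree automata, followed by a backwards translation constructing bounded-width clique expressions from accepting automaton runs. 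For our purposes this is invoked as an established result, while the genuinely new bookkeeping lies in the coloring-based simulation of MSO quantifiers used for (2) $\Rightarrow$ (3).
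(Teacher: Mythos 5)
The paper does not prove this statement at all; it is quoted verbatim from the literature (Courcelle et al.\ for the {\sf MSO} part, with the {\sf FO}-over-orders formulation due to Colcombet), so your attempt can only be judged on its own merits. Your steps (1)$\Rightarrow$(2) (interpreting a graph in its clique-expression tree) and (3)$\Rightarrow$(1) (lifting {\sf FO} over the tree-order to {\sf MSO} over the tree and invoking the classical fact that {\sf MSO}-transductions of trees have bounded cliquewidth) are correct and standard. The genuine gap is in (2)$\Rightarrow$(3): the claim that each set quantifier $\exists X\,\varphi(X,\bar x)$ can be replaced by a fresh coloring-introduced predicate $U_X$, with universal set quantifiers ``handled dually by pushing negations inwards,'' is unsound. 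A coloring is a single existential guess, made once at the level of the whole input structure and before any formula is evaluated. It therefore cannot simulate a set quantifier occurring under a negation ($\forall X$ is $\neg\exists X\neg$, and no dualization removes the inner existential from a negative context), cannot simulate quantifier alternation, and cannot even simulate a positive $\exists X$ inside an edge-defining formula $\rho_E(x,y)$, because the witness set may depend on the pair $(x,y)$ while the coloring must be fixed uniformly for all pairs simultaneously. If this syntactic translation were valid, it would show that {\sf MSO} collapses to {\sf FO}-plus-coloring over arbitrary classes, which is false.

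What actually closes (2)$\Rightarrow$(3) is Colcombet's theorem, and it is the hardest part of the equivalence, not bookkeeping. One guesses by coloring the states (or state transformations) of a run of a tree automaton, and local consistency of the run is indeed {\sf FO}-checkable using the cover relation, which, as you note, is {\sf FO}-definable from the tree-order. But evaluating $\rho_E(x,y)$ then requires composing unboundedly many transition-monoid elements along the paths from $x$ and $y$ to $x\curlywedge y$ and onward to the root, an iterated product that {\sf FO} cannot perform. This is overcome by precoloring the tree with a bounded-depth factorization of these paths --- Simon's factorization forest theorem for the word/linear-order case, and Colcombet's extension of it to trees --- after which the composition becomes {\sf FO}-definable over the colored tree-order. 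So your middle arrow must either cite this result (as the paper does, via its references to Colcombet 2007 and to Simon's factorization theorem) or reproduce the factorization machinery; the quantifier-hoisting argument as written does not establish it.
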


\subsection{Decompositions and covers}\label{sec:dec}

A graph class $\mathscr C$ admits a \emph{bounded-size decomposition of parameter $p\in \mathbb N$ on a base class~$\mathscr D$} if there
is an integer $C_p$ such that for every $G\in \mathscr C$, there is a coloring $\gamma : V(G)\to [C_p]$ such
that for any set $I\subseteq [C_p]$ of at most $p$ colors, the graph induced by vertices with colors
in $I$ belongs to $\mathscr D$.
Let $\Pi$ be a class property (like ``bounded linear cliquewidth'').
We say that a graph admits \emph{bounded-size $\Pi$-decompositions} if it admits decompositions of parameter $p$ on a base class $\mathscr D\in\Pi$ for every
integer $p$ (we may simply talk about decompositions in this text as all decompositions considered will be bounded-size).

This property is equivalent to having bounded-size  $\Pi$-covers \cite{covers}. A class $\mathscr C$ has \emph{bounded-size $\Pi$-covers}
if for every integer $p$ there is a class $\mathscr D_p\in\Pi$ and an integer $m_p$ such that for every $G\in \mathscr C$ there exists subgraphs
$G_1, \ldots, G_{m_p} \subseteq G$ such that each $G_i$ is in $\mathscr D_p$ and for every $p$ vertices
$u_1, \ldots, u_p \in V(G)$, there is $i\in [m_p]$ such that $u_1, \ldots, u_p \in V(G_i)$.

\subsection{Coupling}
\label{sec:coupling}
A class $\mathscr A$ is a \emph{coupling} of two monadically dependent classes $\mathscr B$ and $\mathscr C$, which we can
assume have distinct signatures $\sigma_{\mathscr B}$ and $\sigma_{\mathscr C}$, if $\mathscr A$ is monadically dependent and every $A\in \mathscr A$ is a
$\sigma_{\mathscr B}\cup \sigma_{\mathscr C}$-structure whose $\sigma_{\mathscr B}$-reduct is in $\mathscr B$, and
whose $\sigma_{\mathscr C}$-reduct is in $\mathscr C$.
The notion of  modelization  is a strengthening of the notion of a transduction pairing.
\subsection{Modelization}
\label{sec:model}
Let $\mathscr C$ be a class of graphs, let $\sigma$ be a finite relational signature including the unary predicate $\ground$. The set $\ground(\mathbf M)$ is called the \emph{ground} of $\mathbf M$.

\begin{definition}
\label{def:modelization}
	A \emph{modelization } of $\mathscr C$ in $\sigma$-structures is a pair $(\interp{I},\interp{T})$, where
	\begin{itemize}
		\item $\interp{I}$ is an interpretation of graphs in $\sigma$-structures such that the vertex set of $\interp{I}(\mathbf M)$ is $\ground(\mathbf M)$;
		\item $\interp{T}$ is a transduction from graphs to $\sigma$-structures such that for every $G\in\mathscr C$, there exists $\mathbf M\in\interp{T}(G)$ such that $G=\interp{I}(\mathbf M)$.
	\end{itemize}
\end{definition}

Given a (fixed) modelization  of $\mathscr C$, 
a $\sigma$-structure $\mathbf M$ is  a \emph{model} of $G$ if  $\interp{I}(\mathbf M)=G$, which is \emph{accessible}  if $\mathbf M\in  \interp{T}(G)$.
Note that the ground of an accessible model of a graph $G$ is the vertex set of $V$. We denote by $\mathscr C^\text{mod}$ the class of all the accessible models of graphs in $\mathscr C$ and we say that $\mathscr C$ has a \emph{modelization in} the class $\mathscr C^{\text{mod}}$.
Remark  that $(\interp{T},\interp{I})$ is a transduction pairing of 
$\mathscr C$ and $\mathscr C^\text{mod}$.

A mapping $(\mathbf M,X)\mapsto \restr(\mathbf M,X)$, defined for $\mathbf M$ a model of a graph in $\mathscr C$ and $X$ a subset of the ground of $\mathbf M$,
is a \emph{model restriction} if
$\restr(\mathbf M, X)$ is an induced substructure of $\mathbf M$ such that  $\interp{I}(\restr(\mathbf M, X)=\interp{I}(\mathbf M)[X])$.
We do not require that the restriction of an accessible model of a graph is itself accessible.

\section{Semi-plane rooted trees}\label{sec:trees}%

Tree models are ubiquitous in structural decompositions of %
graph classes.
They are usually unordered, but %
introducing ordering may be useful 
when dealing with unstable classes.
However, such an ordering should be in general partial if 
it is required to be transducible from the modeled graph.
This justifies the introduction of semi-plane rooted trees.

A plane tree is a classical combinatorial object consisting of a tree given with its plane embedding. Equivalently, it can be seen as a rooted tree with a left-to-right ordering of the children of each node.
We define a \emph{semi-plane rooted tree}
as a rooted tree, whose internal nodes may or may not have their children linearly ordered (See \zcref{fig:pt}). The \emph{height} of a semi-plane rooted tree is the number of vertices in a 
longest simple path from the root to a leaf.

\begin{figure}[ht]
\begin{center}
	\includegraphics[width=.75\textwidth]{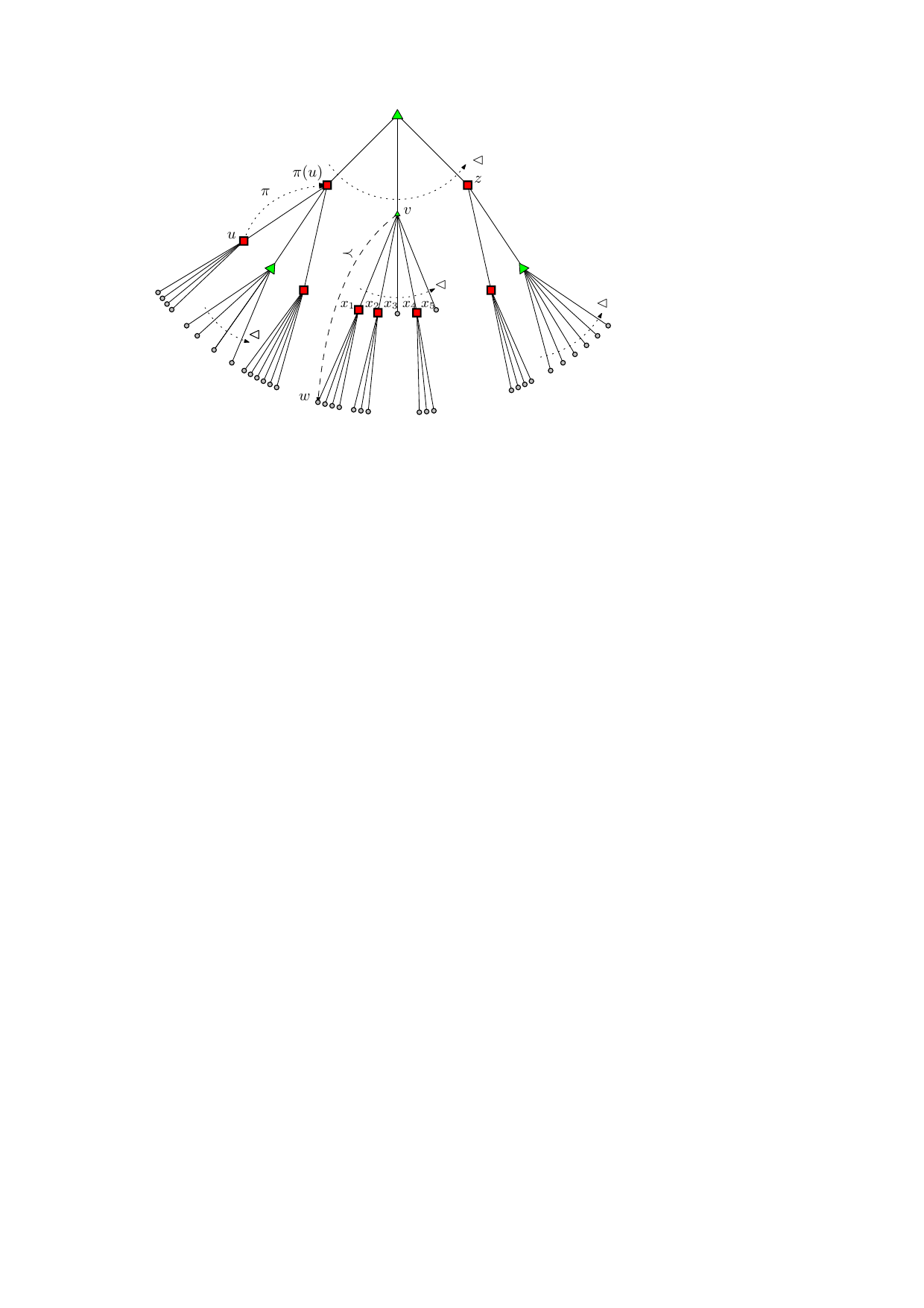}
\end{center}
\caption{A semi-plane rooted tree. The larger node is the root $r$. The parent function is denoted by $\pi$. 	The ancestor relation (which is a tree-order) is denoted by $\prec$. For example, $v\prec w$.
	Green (triangular) vertices are chain vertices (marked $C$), red (square) vertices are antichain vertices (marked $A$), and gray (circle) vertices are leaves (marked $L$).
	The children of a chain vertex are ordered by $\lin$. For example, the children of $v$ are ordered by $x_1\lin x_2\lin x_3\lin x_4\lin x_5$ and the children of the root are ordered by $\pi(u)\lin v\lin z$.}
\label{fig:pt}
\end{figure}

In this section,
we make precise their encoding as logical structures.
We define a  signature $\sigma_{1}=\{C,\curlywedge,\lin\}$, where $C$ is a  unary relation, $\curlywedge$ is a binary function, and $\lin$ is a binary relation.

The axioms of a semi-plane tree, encoded as $\sigma_1$ structure are:
\begin{itemize}
\item the axioms stating that $\curlywedge$ is the infimum function associated to a tree-order, namely  the standard axioms expressing that $\curlywedge$ defines a semilattice (idempotence, commutativity, associativity) and the special property 
$|\{x\curlywedge y,x\curlywedge z,y\curlywedge z\}|\leq 2$ for every $x,y,z$; note that for such a function, the partial order 
$\preceq$ defined by $(x\preceq y)\iff (x\curlywedge y=x)$ is indeed a tree-order (it has a minimum and, for every element $v$, the downset $\down{v}:=\{x\colon x\preceq v\}$ is a chain);
\item an axiom stating that the elements in $C$ are not maximal in the tree-order $\preceq$;
\item an axiom stating that $\vartriangleleft$ is a partial order in which two elements $x$ and $y$ are comparable if and only if 
their infimum is in $C$ and covers both $x$ and $y$.
\end{itemize}

It will be convenient to define additional relations and functions from those in $\sigma_1$. Towards this end,
we define the signature $\sigma=(C,\curlywedge,\vartriangleleft,<,E,L,A,r,\pi,{\rm st})$ and an interpretation $\interp{I}_{\sigma_1\rightarrow\sigma}$ of $\sigma$-structures in $\sigma_1$-structures (see \zcref{tab:sigma}).
\begin{table}[!ht]
\centering
\begin{tabular}{|c|Mc|p{.6\textwidth}|}
	\hlx{c{2-3}v[0]}
	\multicolumn{1}{c|}{}&\text{symbol}&\multicolumn{1}{c|}{meaning}\\
	\hlx{v[0]c{2-3}v[0-3]hv}
	\multirow{3}{*}{signature $\sigma_1$}&C(x)&$x$ is not a leaf, and its children are ordered\\
	\hlx{vc{2-3}v}
	&x\curlywedge y&the infimum of $x$ and $y$\\
	\hlx{vc{2-3}v}
	&x\vartriangleleft y&the sum, for each node $v$ marked $C$, of a linear order on the children of $v$\\
	\hlx{vhhv}
	\multirow{2}{*}{signature $\sigma_2$}&x<y&the transitive closure of the union of $\prec$ and $\vartriangleleft$\\
	\hlx{vc{2-3}v}
	&E(x,y)&the cover graph of $\prec$\\
	\hlx{vhhv}
	\multirow{6}{*}{extras: 
		$\sigma\setminus(\sigma_1\cup\sigma_2)$} &x\prec y& the tree-order associated to the infimum $\curlywedge$\\
	\hlx{vc{2-3}v}
	&L(x)&$x$ is a leaf (i.e. a maximal element) of the tree-order\\
	\hlx{vc{2-3}v}
	&A(x)&$x$ is not a leaf and the children of $x$ are unordered (hence, $C,L,A$ defines a partition of the node set)\\
	\hlx{vc{2-3}v}
	&M_r(x)&$x$ is the root of the tree (that is the minimum of $\prec$)\\
	\hlx{vc{2-3}v}
	&\pi(x)&function mapping $x$ to its unique predecessor in $\prec$ if $x$ is not the root, and to itself otherwise.\\
	\hlx{vc{2-3}v}
	&\mbox{st}_x(y)&function mapping $x$ and $y$ to the child $v$ of $x$ such that $v\preceq y$ if $x\prec y$, and to $x$ otherwise.\\
	\hlx{vh}
\end{tabular}
\caption{The signature $\sigma$ with intended meaning of the symbols.}
\label{tab:sigma}
\end{table}

We define the signature $\sigma_2=(<,E)$. 
We note that for every semi-plane tree $\Lambda$ represented as a $\sigma_1$-structure, the expanded $\sigma$-structure $\interp{I}_{\sigma_1\rightarrow\sigma}(\Lambda)$ can be recovered from its $\sigma_2$-reduct 
by an interpretation, which we denote by $\interp{I}_{\sigma_2\rightarrow\sigma}$.
Hence, the representations of semi-plane trees as $\sigma$-structures, $\sigma_1$-structures, and $\sigma_2$-structures are interpretation-equivalent.

This equivalence will be convenient as it will allow us to use all the power of $\sigma$-structures for semi-plane trees defined  as $\sigma_1$-structures or $\sigma_2$-structures.

Let us additionally define the subsignature $\sigma_3:=\{M_r,E,\vartriangleleft\}$ of 
$\sigma$.
While $\sigma_3$ is in general not as expressive as $\sigma$, it is the case when considering bounded-height trees.

\begin{remark}\label{rem:sigma3}
Let $\mathscr T$ be a class of bounded-height semi-plane trees. The trees of $\mathscr T$ 
viewed as $\sigma$-structures are interpretable from their
$\sigma_3$-reducts.
\end{remark}

\begin{proof}
Let $h$ be a bound on the height of semi-plane trees of $\mathscr T$. The order $\prec$
can be recovered in any tree of $\mathscr T$ by noticing that $x\prec y$ if and only if
there exists a simple $y$ -- $r$ path of length at most $h+1$ going through $x$. The infimum
$\curlywedge$ is then easily definable from $\prec$.
The children of a node are its larger (in $\prec$) $E$-neighbours, allowing to define $C(x)$
by the existence of two children $y_1$ and $y_2$ of $x$ with $y_1\vartriangleleft y_2$. Thus, $\sigma_1$-reducts are interpretable in the $\sigma_3$-reducts, and the
result follows.
\end{proof}

\subsection{T-models}

A \emph{T-model} of \emph{complexity} $(n,h)$ is a triple $\mathfrak M=(\mathbf
T,\gamma,\kappa)$, where $\mathbf T$  is a semi-plane tree with depth at most $h$, $\gamma:\ground(\mathbf T)\rightarrow [n]$ is a coloring of the ground of $\mathbf T$, and
$\kappa: A(\mathbf T)\cup C(\mathbf T)\mapsto \{\bot,\top\}^{[N]\times[n]}$ is a mapping such that
$\kappa(v)$ (written $\kappa_v$ in the following) is a symmetric function if $v\in A(\mathbf T)$.
(The mappings $\gamma$ and $\kappa$ are actually encoded using unary predicates, so that $\mathfrak M$ is a monadic expansion of $\mathbf T$).
The \emph{ground} of $\mathfrak T$ is defined as $L(\mathbf T)$.

We define the interpretation $\build[n]$ 
to graphs defined by
\begin{align*}
	\nu(x)&:=L(x)\\
	\rho_E(x,y)&:=\lambda(x,y)\vee\lambda(y,x),
	\intertext{where}
	\lambda(x,y)&=(A(x\curlywedge y)\land(x\neq y)\lor C(x\curlywedge y)\land (x<y))\land\kappa_{x\curlywedge y}(\gamma(x),\gamma(y))\\
	&=\neg(x\geqslant y)\land \kappa_{x\curlywedge y}(\gamma(x),\gamma(y))\\
\end{align*}

(Intuitively, the adjacency between a pair of vertices is stored at their least common ancestor in the tree,
and takes into account their order with respect to $<$ when this ancestor is a node of type $C$.)
This way,
a T-model $\mathfrak M$ defines a graph $G= \build[n](\mathfrak M)$.

For a T-model $\mathfrak M = (\mathbf T, \gamma, \kappa)$ and a node $u \in \mathbf T$, we
write $\mathfrak M{\upharpoonright}u$ for the submodel of $\mathfrak M$ induced by the subtree $\mathbf T[u]$ of $\mathbf T$ rooted at $u$ (and the derived restrictions of $\gamma$ and $\kappa$).

Let $\mathfrak M = (\mathbf T, \gamma, \kappa)$  be a T-model of a graph $G$  and let $X\subseteq\ground(\mathbf T)$. We denote by $\mathbf T\langle X\rangle$ the substructure of $\mathbf T$ induced by $\down{X}$ and by $\mathfrak M \langle X\rangle$
the substructure $(\mathbf T\langle X\rangle, \gamma|_X,\kappa|_{\mathbf T\langle X\rangle\setminus X})$ of $\mathfrak M$.
We emphasize the next remark, as we shall refer to it later.
\begin{remark}
	\label{rem:restr}
For every T-model $\mathfrak M$ of a graph $G$ and every subset $X$ of $V(G)$, 
  $\mathfrak M\langle X\rangle$ is a T-model of $G[X]$.
\end{remark}

In particular, the class of graphs with models of complexity $(n,h)$ is hereditary.

Our aim, in the next two sections, is to prove that for some important base classes of graphs $\mathscr C$, we can provide a transduction $\interp{T}$ such that $(\build[n], \interp{T})$ is a modelization  of $\mathscr C$.

In the case where $C(\mathbf{T})$ is empty, we retrieve the notion of connection model for classes of bounded shrubdepth (\cite{Ganian2012}). A class of graphs $\mathscr C$ has bounded shrubdepth
if and only if there exists integers $n$ and $h$ and a class $\mathscr M$ of  models of complexity $(n,h)$  without nodes of types $C$, such that $\mathscr C \subseteq \build[n](\mathscr M)$.  Otherwise, this corresponds to the notion of \emph{embedded shrubdepth} present in \cite{SODA_msrw}: a class of graphs $\mathscr C$ has bounded embedded shrubdepth
if and only if there exists integers $n$ and $h$ and class of models $\mathscr M$ each of complexity $(n,h)$ such that $\mathscr C \subseteq \build[n](\mathscr M)$.

\subsection{Shallow cographs}
The \emph{height} of a cograph $G$ is the minimum height of a cotree representing $G$.

A cotree of height $h$ is nothing but a T-model of complexity $(1,h)$ without C-nodes, where the function $\kappa$ is defined as
\[
\kappa_u(1,1)=\begin{cases}
    \bot&\text{ if $u$ is a disjoint-union node (type U),}\\
    \top&\text{ if $u$ is a join node (type J).}
\end{cases}
\]

Hence, by a cotree of height $h$ we mean a semi-plane rooted trees $\mathbf T$ of height $h$ without $C$-nodes, whose internal vertices are labeled U or J.

A cotree $\mathbf T$ is \emph{clean} if the types of the internal vertices alternate (between U and J) on every branch and every internal node has at least two children. 

It is easily checked that for every integer $h$ there exists an transduction $\interp{Cotree}_h$, such that if $G$ is a cograph with height at most $h$, then $\interp{Cotree}_h$ contains the unique clean cotree of $G$ (which has height at most $h$).
Hence, we have
\begin{remark}
\label{rem:cograph_model}
The pair $(\build[1],\interp{Cotree}_h)$ is a modelization  of the class of cographs with height at most $h$ in cotrees and that $(\mathbf T,X)\mapsto \mathbf T\langle X\rangle$ is a model restriction (by \zcref{rem:restr}). 

In this modelization, a cotree with height at most $h$ is clean if and only if it is accessible.    
\end{remark}

The  case of bipartite graphs admitting models of complexity $(2,h)$ is far more intricate, as we shall see in the next section dedicated to the proof of a statement similar to \zcref{rem:cograph_model}.

\section{Shallow ordered bicographs}\label{sec:sob}
  
A
\emph{semi-plane bicotree} (or simply, a \emph{bicotree}) 
is a T-model of complexity $(2,h)$ (with internal vertices labeled U,B, or O), where the function $\kappa$ is defined as

\[
\kappa_u(i,j)=\begin{cases}
    \bot&\text{if $u$ has type U (Union)};\\
    (i\neq j)&\text{if $u$ has type B (Bijoin)};\\
    (i<j)&\text{if $u$ has type O (Ordered-bijoin)}.
\end{cases}
\]
A \emph{shallow ordered bicographs} (or \emph{sob}) is a bipartite graph with parts $V_1$ and $V_2$ having a bicotree with $\gamma^{-1}(i)= V_i$ as a T-model.
The \emph{height} of a sob is defined as the minimum height of all its 
bicotrees. We will consider classes of sobs with bounded height.
(When the height is not bounded, these models appear in the literature using the name $K+S$ decomposition~\cite{fouquet1999bipartite}.)

We say that a bicotree $\mathfrak T$ is \emph{clean} if it satisfies the following conditions:

\begin{enumerate}
    \item For every U-node $u$ with children $v_1, \ldots, v_n$, the subtrees $\mathfrak T{\upharpoonright}v_i$ model the connected components of $\mathfrak{T}{\upharpoonright}u$;
    \item For every B-node $u$ with children $v_1, \ldots, v_n$, the subtrees $\mathfrak
    	T{\upharpoonright}v_i$ model the bipartite complement of the connected
		components of $\overline{\mathfrak{T}{\upharpoonright}u}^{\mbox{\tiny bip}}$;

    \item For every O-node $u$ with children $v_1, \ldots, v_n$, the subtrees $\mathfrak T{\upharpoonright}v_i$ contain both a leaf colored~1 and a leaf colored~2.
\end{enumerate}

For $\mathfrak T$ a bicotree, let $\mathfrak T^\star$ be the bicotree obtained by
exchanging labels U and B and reversing the order $\vartriangleleft$.

\begin{remark}
\label{rem:dual}
    If $\mathfrak T$ models $G$, $\mathfrak T^\star$ models $\overline{G}^{\mbox{\tiny bip}}$. Moreover, $\mathfrak T$ is clean if and only if $\mathfrak T^\star$ is clean.
\end{remark}

We now prove that every sob has a clean bicotree model. However,
as clean bicotrees can have internal nodes with a single child, a sob not only does not have a unique clean bicotree model, but has in fact clean bicotree models of unlimited heights.

\begin{restatable}{lemma}{lemcleanbic}
\label{lem:cleanbic}
    Every sob of height $h$ has a clean bicotree of height at most $3h$. 
\end{restatable}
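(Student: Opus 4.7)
The plan is to proceed by induction on the height $h$ of $\mathfrak T$. The base case $h=1$ is immediate: a single leaf is already clean. For the inductive step, apply the induction hypothesis to each subtree $\mathfrak T_i = \mathfrak T{\upharpoonright}v_i$ rooted at a child of the root of $\mathfrak T$ to obtain clean bicotrees $\mathfrak T_i'$ of height at most $3(h-1)$ modelling the same bipartite graphs, then restructure based on the type of the root.

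For a U-rooted $\mathfrak T$, the modelled graph $G$ is the disjoint union of the graphs $G_i$ modelled by the cleaned children $\mathfrak T_i'$, and the connected components of $G$ are obtained by flattening: for each $\mathfrak T_i'$ whose root is a U-node, lift its children (which represent the components of $G_i$ by condition~1); attach the other $\mathfrak T_i'$ directly. The resulting bicotree is clean at the new U-root with height at most $1 + 3(h-1) \leq 3h$. A B-rooted $\mathfrak T$ is handled dually via \zcref{rem:dual}: dualize to $\mathfrak T^\star$, apply the U-case, and dualize back, which preserves both cleanness and the height bound.

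The difficult case is an O-rooted $\mathfrak T$. Cleanness condition~3 requires each child of the O-root to be bichromatic, but a cleaned $\mathfrak T_i'$ may be monochromatic; by cleanness, such a child reduces to a flat structure on single-color leaves. These monochromatic children must be absorbed into adjacent bichromatic siblings, with the merging dictated by the O-rule: a color-$1$-only $\mathfrak T_i'$ has its leaves adjacent at the O-level precisely to the color-$2$ leaves of later siblings, so it is absorbed into the next bichromatic sibling $\mathfrak T_j'$ by introducing a new B-node above (a restructured) $\mathfrak T_j'$, with the color-$1$ leaves of $\mathfrak T_i'$ attached as additional children. Maximal runs of same-color monochromatic children are first collapsed under a single U-node, and boundary monochromatic children (at the ends of the O-order, where the corresponding O-level adjacencies vanish) are merged into bichromatic siblings via a U-node rather than a B-node.

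The main obstacle is maintaining cleanness condition~2 at the newly introduced B-nodes: their children must model the bipartite complements of the connected components of the bipartite complement of the merged graph. This is handled by a case analysis on the root type of the bichromatic sibling $\mathfrak T_j'$ being merged into, possibly requiring a local restructuring (using the U/B duality of \zcref{rem:dual}) that introduces up to two extra levels. Summing the worst case---one level for the new O-root, one for the absorbing B-node, and one for the local restructuring---gives at most three additional levels per inductive step, yielding the overall bound of $3h$ on the height.
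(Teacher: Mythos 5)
Your inductive frame (induction on $h$, treating the B-root by the duality of \zcref{rem:dual}) matches the paper's, but your O-case construction has a genuine gap: local absorption of monochromatic children into adjacent bichromatic siblings loses required edges. Under your own stated semantics, a color-$1$-only child is O-adjacent exactly to the color-$2$ leaves of \emph{later} siblings, so (dually, and forcedly, since absorbing it forward would create a spurious complete bipartite graph inside the new B-node) a color-$2$-only child must be absorbed \emph{backward} into the previous bichromatic sibling. Now consider an O-root with children $B_1, M_1, M_2, B_2$ in this order, where $B_1,B_2$ are bichromatic, $M_1$ is color-$1$-only and $M_2$ is color-$2$-only. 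Since $M_1$ precedes $M_2$, all edges between the color-$1$ leaves of $M_1$ and the color-$2$ leaves of $M_2$ are present in the modeled graph; but after your absorption, $M_1$ sits under a B-node with $B_2$ (last position) while $M_2$ sits under a B-node with $B_1$ (first position). No B-node contains both, and at the O-level the two merged positions are now in the wrong order, so these edges vanish. No orientation of your local rule survives this pattern (the mirror pattern kills the opposite orientation), so the absorption scheme fails before the condition-2 bookkeeping you defer to an unspecified ``case analysis with local restructuring'' even arises.

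The paper's O-case avoids new B-joins below the root entirely: it partitions the children into consecutive intervals, each bichromatic as a whole but with \emph{no} root-level edges between its members (each bichromatic pivot is grouped with its adjacent monochromatic runs), collapses each interval under a plain U-node---which creates no adjacencies and hence carries no cleanness obligation---strings these U-nodes under a single O-node, and makes the new root a B-node whose only other children are the extreme monochromatic runs, whose leaves are indeed adjacent to \emph{all} opposite-colored leaves elsewhere, exactly matching B-semantics. In particular, in the pattern above, $M_1$ is grouped with $B_1$ and $M_2$ with $B_2$ under U-nodes, and the O-node between the two groups supplies the $M_1$--$M_2$ edges. A second, smaller gap is in your U-case: attaching a B- or O-rooted cleaned child directly under the new U-root presumes it models a connected graph, which is false in general (a clean B-root over two color-$1$ leaves models two isolated vertices, and clean O-rooted trees can likewise model disconnected graphs). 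The paper sidesteps this by restricting the original model to each connected component (\zcref{rem:restr}) and applying the induction hypothesis to those restrictions, rather than to the child subtrees followed by flattening.
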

\begin{proof}
	Let $G$ be a sob and let $\mathfrak T$ be a bicotree of $G$. 
    
    We prove by induction over $h\ge 1$ that for every bicotree $\mathfrak T$ of height at most $h$ there exists a clean bicotree $\mathfrak M$ of height at most $3h$ modeling  the same graph.
    
    All bicotrees of height $1$ are clean. Assume the property has been proved for $h-1$, where $h\ge 2$, and consider a bicotree $\mathfrak T$ of height $h$, which models a graph $G$.
	
	Assume the root $r$ is a node of type U. 
    Let $H_1,\dots,H_k$ be the connected components of $G$. As $r$ is of type U, each $V(H_i)$ is included in a subtree rooted at a child of $r$, hence has a 
    bicotree model of height at most $h-1$. By induction,
    there exists clean bicotrees $\mathfrak M_1,\dots,\mathfrak M_k$ of height at most $3(h-1)$ modeling  
    $H_1,\dots,H_k$. The bicotree with root labeled U and having $\mathfrak M_1,\dots,\mathfrak M_k$ as child subtrees, is a clean bicotree model of $G$ of height at most $3h$.
	
	Assume the root is a node of type B. Then, the same reasoning shows that $\overline{G}^{\mbox{\tiny bip}}$ has a clean bicotree model $\mathfrak M$ of height at most $3h$. Hence, according to \zcref{rem:dual}, $\mathfrak M^\star$ is a clean bicotree of $G$ of height at most $3h$.
	
	Assume the root $r$ is of type O and has children
    $v_1,\dots,v_n$.
    By induction, the graphs $G_1,\dots,G_k$ modeled by $\mathfrak T{\upharpoonright}v_1,\dots,\mathfrak T{\upharpoonright}v_k$ have clean bicotree models
    $\mathfrak M_1,\dots,\mathfrak M_k$ of height at most $3(h-1)$. 
    We partition $[k]$ into intervals 
    $I_L,I_1,\dots,I_\ell,I_R$ (in this order), where 
    \begin{itemize}
        \item for every $i\in I_L$, $V(G_i)$ is colored $1$,
        \item for every $i\in I_R$, $V(G_i)$ is colored $2$,
        \item for every $s\in [\ell]$ there is some $t_s\in I_s$ such that if $i\in I_s$, then  $V(G_i)$ has color 1 if $i<t_s$, color 2 if $i>t_s$ and $\bigcup_{i\in I_\ell}V(G_i)$ contains vertices of both colors.
    \end{itemize}
    The clean bicotree $\mathfrak M$ is constructed as follows (see \zcref{figsob}): the root $r$ has label B. If $I_L\neq\emptyset$, it has a child labeled U, whose children are the clean bicotrees $\mathfrak M_i$ for $i\in I_L$; if $\ell\geq 1$, $r$  has a child labeled $O$, with children $u_1,\dots,u_\ell$ (ordered this way) labeled $U$, each $u_s$ having children $\mathfrak M_i$ with $i\in I_s$;   if $I_R\neq\emptyset$, $r$ has a child labeled U, whose children are the clean bicotrees $\mathfrak M_i$ for $i\in I_R$. It is easily checked that $\mathfrak M$ is a clean bicotree model of $G$ of height at most $3h$.
    
\end{proof}
\begin{figure}[ht]
	\centering
	\includegraphics[width=.5\textwidth]{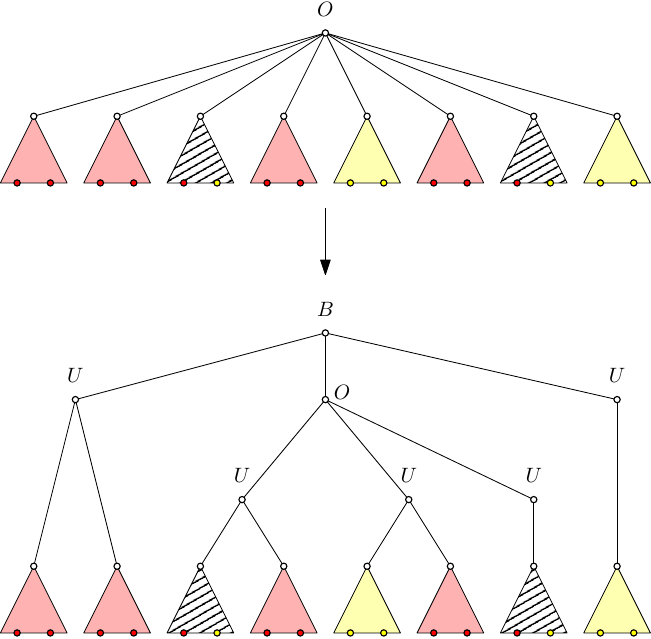}
    	\caption{Turning a bicotree into a clean bicotree, when the root is of type O. Color $1$ is red, color $2$ is yellow. Subtrees with monochromatic set of leaves are colored red or yellow; those with both colors are represented with a rising tiling pattern.}
	\label{figsob}
\end{figure}

The following structural property of sobs, which is similar to the property of cographs to exclude $P_4$ as an induced subgraph, implies that graph connectivity is first-order definable on sobs.

\begin{lemma}[\cite{fouquet1999bipartite}]
\label{sob_p7}
    Let $G$ be a sob, then $G$ does not contain an induced $P_7$.
\end{lemma}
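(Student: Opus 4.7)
The plan is to proceed by induction on the height $h$ of a bicotree $\mathfrak T$ modeling the sob $G$, case-splitting on the type of the root $r$ of $\mathfrak T$. The base case is trivial; for the inductive step, assume $G$ contains an induced $P_7$, say $P = u_1 u_2 \ldots u_7$, and let $v_1, \ldots, v_k$ be the children of $r$ with $G_i$ the sob modeled by $\mathfrak T{\upharpoonright}v_i$.

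If $r$ has type U then $G$ is the disjoint union of the $G_i$, and since $P$ is connected its vertex set lies within a single $V(G_i)$, to which the inductive hypothesis applies. If $r$ has type B, a short direct computation on the unique bipartition of $P_7$ shows that $P_7$ is self-complementary as a bipartite graph, so $G$ contains an induced $P_7$ if and only if $\overline{G}^{\mbox{\tiny bip}}$ does. By \zcref{rem:dual}, $\mathfrak T^\star$ is then a bicotree of $\overline{G}^{\mbox{\tiny bip}}$ whose root has type U, which reduces the situation to the previous case.

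The main case is when $r$ has type O, with ordered children $v_1 \vartriangleleft \cdots \vartriangleleft v_k$. For each $s \in \{1,\ldots,7\}$, let $\alpha_s$ denote the index of the child below which $u_s$ lies. I plan to show that all $\alpha_s$ coincide, so that $V(P)\subseteq V(G_i)$ for some $i$ and the inductive hypothesis again applies. The O-rule is asymmetric: when $\alpha_s \neq \alpha_t$, the pair $u_s u_t$ is an edge of $G$ if and only if the vertex of smaller $\alpha$-value has color $1$ and the vertex of larger $\alpha$-value has color $2$. Since $P_7$ is bipartite with $\gamma$ respecting the parts, its $4+3=7$ vertices yield $12$ bichromatic pairs, split into $6$ edges and $6$ non-edges of $P$. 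Each edge $u_s u_t$ with $\gamma(u_s) = 1, \gamma(u_t) = 2$ gives the constraint $\alpha_s \leq \alpha_t$, while each bichromatic non-edge in the same configuration gives $\alpha_s \geq \alpha_t$. Chaining these along the path --- for instance $\alpha_1 \leq \alpha_2 \leq \alpha_5 \leq \alpha_4 \leq \alpha_1$, obtained from the edges $u_1u_2$, $u_4u_5$ and the non-edges $u_2u_5$, $u_1u_4$, and then analogous cycles for $\alpha_3, \alpha_6, \alpha_7$ --- collapses all inequalities to equalities.

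The main obstacle is the O-case bookkeeping: one must track for each bichromatic $P_7$-pair whether it is an edge or non-edge and verify that the resulting system of inequalities really forces all $\alpha_s$ equal. A symmetric argument handles both $\gamma(u_1) = 1$ and $\gamma(u_1) = 2$, since reversing $\vartriangleleft$ swaps the two subcases without altering the underlying combinatorics.
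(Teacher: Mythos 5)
The paper does not prove this lemma at all: it is imported verbatim from Fouquet et al.~\cite{fouquet1999bipartite} (the source of the $K+S$ decomposition). Your induction on the height of the bicotree is therefore a genuinely different, self-contained route, and after checking it I believe it is correct. The two load-bearing claims both hold. First, $P_7$ is indeed self-complementary as a bipartite graph: with parts $\{u_1,u_3,u_5,u_7\}$ and $\{u_2,u_4,u_6\}$ the bipartite complement of $u_1u_2\cdots u_7$ is the path $u_3u_6u_1u_4u_7u_2u_5$, so the B-root case legitimately reduces to the U-root case via $\mathfrak T^\star$ and \zcref{rem:dual}; your observation that the bipartition of an induced $P_7$ must respect the color classes (by connectedness, in a bipartite host) is exactly what makes the bipartite complement of the induced path semi-induced in $\overline{G}^{\mbox{\tiny bip}}$. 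Second, your O-case conventions match the semantics of $\build[2]$: across distinct children of an O-node, a bichromatic pair is an edge if and only if the color-$1$ endpoint lies in the $\vartriangleleft$-earlier child, so an edge with $\gamma(u_s)=1$, $\gamma(u_t)=2$ yields $\alpha_s\le\alpha_t$ and a bichromatic non-edge yields $\alpha_s\ge\alpha_t$ (both trivially true when $\alpha_s=\alpha_t$, so the constraints are sound in all cases). Your sample cycle $\alpha_1\le\alpha_2\le\alpha_5\le\alpha_4\le\alpha_1$ is correct, and the bookkeeping you flagged as the main obstacle does close: with odd vertices colored $1$, the edges $u_5u_6$, $u_2u_3$ and the non-edge $u_3u_6$ give $\alpha_5\le\alpha_6\le\alpha_3\le\alpha_2$, which together with $\alpha_2=\alpha_5$ forces $\alpha_3=\alpha_5=\alpha_6$, and then the non-edge $u_2u_7$ and edge $u_6u_7$ give $\alpha_2\le\alpha_7\le\alpha_6$, forcing $\alpha_7$ equal as well; the color-swapped subcase follows by reversing $\vartriangleleft$ as you say. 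So all seven vertices lie under one child and the induction applies. What your approach buys is independence from the cited literature at the cost of a page of case analysis; the paper buys brevity by citation.
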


\begin{theorem}
	\label{thm:bico_model}
	Let $\mathscr C$ be a class of sobs, 
    and let $h$ be an integer, such that every $G\in\mathscr C$ has a clean bicotree model of height at most $h$. %

    Then, we can define a transduction $\interp{Model}_h$, such that $(\build[2],\interp{Model}_h)$ is a modelization of $\mathscr C$, such that every clean bicotree models of height at most $h$ of a graph in $\mathscr C$ is accessible.

    Moreover, $(\mathbf T,X)\mapsto \mathbf T\langle X\rangle$ is a model restriction.
\end{theorem}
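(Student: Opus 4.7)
The plan is to produce, via a monadic transduction, a clean bicotree of height at most~$h$ modeling any given sob $G\in\mathscr C$. I build $\interp{Model}_h$ as the composition $\interp{J}\circ\mathsf{C}\circ\interp{C}_{h+1}$, where $\interp{C}_{h+1}$ is the $(h+1)$-copying (producing $h{+}1$ clones of each vertex together with the symmetric clone relation), $\mathsf{C}$ is a monadic coloring, and $\interp{J}$ is an FO interpretation producing the $\sigma_3$-structure of the bicotree (from which the full $\sigma$-structure follows by \zcref{rem:sigma3}). For any chosen clean bicotree $\mathfrak T$ of $G$ of height at most~$h$, the coloring marks, for each internal node $u$ of $\mathfrak T$ at depth~$d(u)$, a distinguished descendant leaf $\ell(u)$ whose $d(u)$-th clone becomes the canonical representative of~$u$; each such clone is also tagged with the type $t(u)\in\{U,B,O\}$, and depth-$0$ clones are tagged as leaves with their bipartition color. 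A bounded palette of auxiliary predicates records whatever additional information is needed to resolve the sibling orders at O-nodes. The interpretation $\interp{J}$ restricts the universe to the tagged clones and defines the cover relation $E$, the root marker $M_r$, and the order $\vartriangleleft$.

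The main technical obstacle is the FO-definition of $E$ and $\vartriangleleft$ from the cloned, colored graph. Two ingredients drive it. First, \zcref{sob_p7} bounds the diameter of each connected component of any induced subgraph of a sob by~$5$, so connectivity and, dually, bipartite co-connectivity, are FO-definable; combining this with a bounded-depth recursion (of depth at most~$h$), the equivalence relation ``share a depth-$d$ ancestor in $\mathfrak T$'' on $V(G)$ is FO-definable for every $d\le h$. Second, at an O-node the children are linearly ordered and this order is readable from the bipartite adjacency pattern, since a $V_1$-descendant of the $i$-th child and a $V_2$-descendant of the $j$-th child are $G$-adjacent exactly when $i<j$. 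The cover edge $E(R,R')$ between a representative~$R$ at depth~$d$ and a representative~$R'$ at depth~$d{+}1$ is then expressed as ``the original leaves of~$R$ and~$R'$ share a depth-$d$ ancestor'', and $\vartriangleleft$ is lifted from the just-described bipartite test on clones of leaves. When parent and child happen to share canonical leaf (the leftmost-child case), the clone relation already witnesses the edge; the content of the construction is dealing with the general case via the preceding FO equivalences.

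Given the transduction, verification is straightforward. For any $G\in\mathscr C$ and any clean bicotree $\mathfrak T$ of $G$ of height at most~$h$, the coloring is realizable (it simply records the data of~$\mathfrak T$), so $\mathfrak T\in\interp{Model}_h(G)$, and every such $\mathfrak T$ is thus accessible. Cleanness yields $\build[2](\mathfrak T)=G$, and $\ground(\mathfrak T)$ is by construction the set of depth-$0$ clones, which is in canonical bijection with $V(G)$. Hence $(\build[2],\interp{Model}_h)$ is a modelization of $\mathscr C$. For the model-restriction claim, $\mathfrak T\langle X\rangle$ is by definition the induced substructure of $\mathfrak T$ on $\down X$, and \zcref{rem:restr} gives $\build[2](\mathfrak T\langle X\rangle)=G[X]$; since its ground is~$X$, the map $(\mathfrak T,X)\mapsto\mathfrak T\langle X\rangle$ is a valid model restriction.
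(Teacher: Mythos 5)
Your overall architecture is essentially the paper's: nondeterministically encode a chosen clean bicotree by copying and coloring the graph, then recover the tree by an FO interpretation, using \zcref{sob_p7} to make connectivity (and, in the bipartite complement, co-connectivity) first-order definable, and using the fact that at an O-node the child order is readable off the bipartite adjacency between descendants of \emph{distinct} children. The paper organizes this as a recursion on the height $h$, composing at each level a generic ``split along a definable equivalence'' transduction (\zcref{eq_trans}), whereas you do it in one shot with $(h+1)$-copying and depth-indexed representatives; that organizational difference is harmless (modulo a small indexing clash between leaf representatives and the root's depth-$0$ clone, which is easily fixed).

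The genuine gap is your assertion that the equivalence ``share a depth-$d$ ancestor'' is FO-definable by a bounded-depth recursion built from connectivity and co-connectivity: this fails precisely at O-nodes. By cleanness condition (3), every child of an O-node contains leaves of both colors, and hence the graph modeled below an O-node with at least two children is both connected and bipartite-co-connected; neither components nor co-components separate the children, so your recursion stalls exactly there. Your fallback --- ``a bounded palette of auxiliary predicates records whatever additional information is needed to resolve the sibling orders at O-nodes'' --- is where the actual work lies and is left unproved: you cannot give each child its own predicate (the number of children is unbounded), and a single adjacency test cannot distinguish ``same child'' from ``different children,'' since adjacency \emph{within} a child is unconstrained (your stated criterion ``adjacent iff $i<j$'' only holds for $i\neq j$). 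The paper's device is to color the children cyclically by three index predicates $K_1,K_2,K_3$ (child $i$ gets index $1+(i\bmod 3)$) and to define the same-child relation by the formulas $\chi_1$ (same-color pairs) and $\chi_2$ (mixed-color pairs), which look for witnesses in a different index class lying strictly between the two candidate subtrees; correctness crucially uses that every O-child is bichromatic, i.e.\ cleanness. Note also that cleanness is not what gives $\build[2](\mathfrak T)=G$ (any bicotree model does); it is needed exactly in the definability step you skipped. Without the mod-$3$ indexing trick or an equivalent mechanism, your central claim --- and with it the definitions of the cover relation $E$ and of $\vartriangleleft$, both of which presuppose definable subtree membership --- is unsupported.
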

\begin{proof}
We first construct inductively a transduction $\interp{Model}_h$, such that $(\build[2],\interp{Model}_h)$ is a modelization of $\mathscr C$
and every clean bicotree models of height at most $h$ of graphs in $\mathscr C$ is accessible.

    As per \zcref{rem:sigma3}, we use the encoding of semi-plane trees as $\sigma_3$-structures.
    Let $\mathscr D$ be the class of all the clean bicotree models of height at most $h$ of every $G\in\mathscr C$.
    Let $\interp{Sk}$ be the natural interpretation of semi-plane rooted trees encoded as $\sigma_3$-structures in bicotrees. Note that if $\mathbf T=\interp{Sk}(\mathfrak T)$, then $\mathfrak T$ is a monadic expansion of an interpretation of $\mathbf T$ changing the representation of the semi-plane rooted tree. Hence, there is a transduction $\interp{X}_h$ such that 
    $(\interp{Sk},\interp{X}_h)$ is a transduction pairing of the classes $\mathscr D$ and $\mathscr D':=\interp{Sk}(\mathscr D)$.

    We construct a transduction 
    $\interp{T}_h$ such that 
    for every $\mathfrak T\in\mathscr D$, 
    $\interp{T}_h(\build[2](\mathfrak T))$
    contains $\interp{Sk}(\mathfrak T)$, hence such that 
    $\interp{X}_h\circ \interp{T}_h\circ\build[2](\mathfrak T)$ contains $\mathfrak T$.
    Then, as $\mathscr C=\build[2](\mathscr D)$, we deduce that
    $(\build[2],\interp{X}_h\circ\interp{T}_h)$ is a transduction pairing of $\mathscr C$ and $\mathscr D$. Thus, the transduction $\interp{Model}_h:=\interp{X}_h\circ\interp{T}_h$ fulfills the requirements of the theorem.
    
    We define the transduction $\interp{T}_h$ %
    by induction on $h$.

    If $h=1$, then $\mathscr C$ contains at most a $K_1$ colored $1$ and a $K_1$ colored $2$. These graphs have a unique clean bicotree model, namely themselves. 
    Hence, we can let $\interp{T}_1$ be the identity transduction with all the vertices marked $M_r$.

    Let $h>1$ and assume that $\interp{T}_{h-1}$ as been defined.
    The transduction $\interp{T}_h$ is defined using the next claim, which details how to construct (recursively) a tree model.
(This claim is illustrated by \zcref{fig:TrS}.)

\begin{claim}\label{eq_trans}
	Let $\sim\,\notin\sigma$ be a definable binary relation, which defines an equivalence relation on the domain of each sob in $\mathscr C$  and let $\mathsf T$ be a transduction from sobs  to bicotrees.
	
	Then, there exists a
	transduction $\interp{S}$ 
	such that for every non-empty sob $G\in\mathscr C$ with \mbox{$\sim$-equivalence} classes  $V_1, \ldots, V_k$ inducing substructures $G_i=G[V_i]$, $\interp{S}(G)$ contains a structure $H$ as follows (cf Fig.~\ref{fig:TrS}): 
	
	For all $H_1, \ldots, H_k$ where $H_i \in \interp{T}(G_i)$ and for all $a_i \in
	V(H_i)$, $H$ is obtained from the disjoint
	union of $H_1, \ldots, H_k$, and a new
	element $r$, by adding the adjacencies
	$E(a_i,r)$ (for $i\in[k]$).
\end{claim}
\begin{claimproof}
	The transduction $\mathsf S$ will be the composition
	\[\interp{S}=\interp{F}\circ\interp{S_0},\text{ where }\interp{S_0}:=\interp{P}\circ \interp{\Lambda}\circ \interp{C}\circ \interp{B}\circ \interp{Rel}^{\varphi_\sim
		\to \sim},\]
	where  $\varphi_\sim$ is the formula defining $\sim$	and the other transductions are defined  below.
	
	The interpretation $\interp{B}$ is defined by
	the formulas $\rho_U(\bar x):=U(\bar x)\wedge\bigwedge_{i\in[|\bar x|]}(x_1\sim x_i)$
	(for each relation $U$ in the signature $\sigma'$ of bicotrees).
	(Note that this interpretation breaks any relation containing elements of different equivalence classes.)
	
	Let us write $\interp T$ as the composition $\interp I\circ \interp \Lambda 
	\circ\interp C$, where $\interp I$ is an interpretation defined by the formulas
	$\nu(x)$ and $(\rho_{U})_{U\in \sigma'}$, $\interp \Lambda$
	is a monadic expansion, and $\interp C$ is a copying operation.%
    Without loss of generality, we can assume that $\interp \Lambda$ marks the first copy of $\interp{C}$ by a
    specific unary predicate $C_1$. %
    Define a new relation $\sim'$ using $C_1$ such that two points are equivalent under $\sim'$ if they have $\sim$-equivalent
    clones or are clones. %
	Let $\mu(x)$ and $\psi_U(\bar x)$ be the relativizations\footnote{the relativization $\varphi'$ of a formula $\varphi$ is recursively obtained 
		replacing $\exists y\ \varphi(\bar x,y)$ (resp. $\forall y\ \varphi(\bar x,y)$) by $\exists y.(y\sim
		x_1)\wedge\varphi'(\bar x,y)$ (resp. $\forall y.(y\sim x_1)\rightarrow\varphi'(\bar x,y)$)} of $\nu(x)$ and
	$\rho_{U}(\bar x)$ to $\sim'$-equivalence classes.
	The interpretation $\interp P$ is defined by $\mu(x)$ and the formulas
	$\psi_{U}(\bar x)\land\bigwedge_{i\in [|\bar x|]}(x_i\sim' x_1)$ for $U\in \sigma'$.

	For all non-empty $H_1, \ldots, H_k$ where $H_i \in \interp{T}(G_i)$, $\interp{S_0}(G)$ contains the  disjoint
	union of $H_1, \ldots, H_k$.

	The transduction $\interp{F}$ is the composition of the copy operation $\interp{C}_2$ and the transduction defined by $\nu(x)=M_1(x)\lor M_3(x)$,
	$\rho_{R'}(x,y):=M_2(x)\land M_2(y)\land R'(x,y)$, and
	$\rho_{L}(x,y):=M_2(x)\land M_3(y)\vee M_3(x)\land M_3(y)$, where $M_1,M_2,M_3$ are new unary predicates.
	For the particular choice of $M_1$ marking the first copy of the structure, $M_2$ marking exactly one element ($a_i$) in the first copy of each $H_i$, and $M_3$ marking a unique element ($a$) of the second copy of the structure, it is easily checked that the obtained structure has the required form.
	Hence, the transduction $\interp{S}$ fulfills all the requirements of the lemma.
\end{claimproof}

\begin{figure}[ht]
	\centering
	\includegraphics[width=\textwidth]{TrS2}
	\caption{The transduction $\sf S$, for $\sf T$ a copying operation.} 
	\label{fig:TrS}
\end{figure}

     Assume there exists a vertex in $G$ marked $M_{U,h}$ (intended meaning is that the root of the model is of type U), we define the equivalence relation 
        $x\sim y:=({\rm dist}(x,y)\le 6)$.
        According to \zcref{sob_p7}, the equivalence classes of $\sim$ are the connected components of $G$. According to \zcref{eq_trans}, there exists a transduction $\interp{T}_h$ such that if
        $G_1,\dots,G_k$ are the connected components of $G$ (that is, the subgraphs of $G$ induced by the equivalence classes of $\sim$),
        for every $\mathbf T_i\in\interp{T}_{h-1}(G_i)$, the graph obtained from the $\mathbf T_i$ by linking their root to a new vertex $r$ (then removing the marks $M_r$ everywhere and marking $M_r$ the root) is in 
        $\interp{T}_h$. Thus, by induction, $\interp{T}_h(G)$ contains every semi-plane rooted tree $\mathbf T$ such that $\interp{X}_h(\mathbf T)$ contains a model of $G$ with root marked U.
         
        Otherwise, the case where there exists a vertex in $G$ marked $M_{B,h}$ (intended meaning is that the root of the model is of type B) is handled in a similar way, by considering the bipartite complement of $G$.
    
        Otherwise (meaning is that the root of the model is of type O), 
    let $G_1, \ldots, G_n$ be the subgraphs modeled by the subtrees rooted
	at children of $u$. We first apply a monadic expansion $\interp K$ introducing the colors
    $\gamma(x) \in \{1,2\}$ of every vertex and predicates
	$K_1, K_2, K_3$, which we refer to as \emph{indices}, such that vertices of
	$G_i$ satisfy the predicate $K_{1+(i\mod 3)}$. We may write $\varkappa(x) = i$ as
	syntactic sugar for $\interp K_i(x)$.

    Let $x\in G_i$ and $y\in G_j$.
    If $\gamma(x)=1$, $\gamma(y)=2$, and $\varkappa(x)\neq \varkappa(y)$, then  $i<j$ if $x$ and $y$ are adjacent (i.e. $E(x,y)$) and $i>j$, otherwise.

    As every $G_i$ contains both a vertex colored $1$ and a vertex colored $2$, we can use the above property to check whether two vertices belong to the same $G_i$: we define the relation $\sim$ by
\begin{multline*}
x\sim y:=(\varkappa(x)=\varkappa(y))\wedge\\\Bigl[
(\gamma(x)=\gamma(y))\wedge\neg\chi_1(x,y)\wedge\neg\chi_1(y,x)
\vee(\gamma(x)\neq\gamma(y))\wedge\neg\chi_2(x,y)\wedge\neg\chi_2(y,x)\Bigr],
\end{multline*}
where
\begin{align*}
    \chi_1(x,y)&:=\exists z\  (\varkappa(z)\neq\varkappa(x))\land
		E(x,z)\land\neg E(z,y)\\
    \chi_2(x,y)&:=\exists z_1, z_2\ (\varkappa(x) \neq\varkappa(z_1))\wedge(\varkappa(x) \neq\varkappa(z_2)) \land
		E(x,z_1) \land \neg E(z_1,z_2) \land
		E(z_2, y)
\end{align*}

Under the assumption that $\gamma(x)=\gamma(y)$ and $\varkappa(x)=\varkappa(y)$, the formula $\chi_1$ asserts that there exists a vertex $z$ whose subtree is strictly to the right of the one of $x$ and strictly to the left of the one of $y$. Similarly, under the assumption $\gamma(x)\neq \gamma(y)$, $\chi_2$ expresses that there exist
vertices $z_1$ and $z_2$ which are in a subtree strictly to the right of the one of $x$ and in a subtree strictly to the left
of the one of $y$, respectively, and that $z_2$ is not left of $z_1$. It is easy to check that these two formulas cover all the possible cases.
Using these definitions of $\chi_1$ and $\chi_2$,
the transduction $\interp{T}_h$ is constructed as in the proof of \zcref{eq_trans}, 
except that the  interpretation $\mathsf P$ that appears in the construction also defines
$a_i\vartriangleleft a_j$ by the formula 
\begin{multline*}
\exists x,y\ (x\sim' a_i)\wedge(x\sim' a_j)\wedge (\gamma(x)=1)\wedge(\gamma(y)=2)\wedge\\
\Bigl[
\bigl((\varkappa(a_i)\neq\varkappa(a_j))\wedge E(x,y)\bigr)\vee
\bigl((\varkappa(a_i)=\varkappa(a_j))\wedge \chi_2(x,y)\bigr)
\Bigr].
\end{multline*}
This finishes the recursive construction of $\interp{Model}_h=\interp{X}_h\circ\interp{T_h}$. That
this transduction has the required properties is immediate from the construction.

\medskip

Then, that  $(\mathbf T,X)\mapsto \mathbf T\langle X\rangle$ is a model restriction follows from \zcref{rem:restr}.
\end{proof}

\ThmSobs

\begin{proof}
	By \zcref{thm:bico_model}, $\mathscr C$ admits a transduction
	pairing with the class of its clean bicotrees. It is then
	sufficient to remark that given that we have a bound on their
	height, the bicotrees can be represented as semi-plane trees
	using the signature $\sigma_3$ by \zcref{rem:sigma3}.
	
	It is then clear that the posets induced by $\vartriangleleft$
	are unions of chains.
\end{proof}

\section{Modelization of graphs with bounded splits}
\label{sec:gluing}

In this section, we use splits to leverage the transduction
pairing obtained in the previous section to more complex classes of graphs.

The main tool we shall use is the notion of a split.

\subsection{Splits}

Cographs and sobs with bounded height will serve as basic decomposition blocks for more complex graphs. This motivates the following definition.

\begin{definition}
	A \emph{split} of a graph $G$ of size $N$ and height $h$ is a coloring $\gamma:V(G)\rightarrow [N]$ such that
	\begin{itemize}
		\item for every $1\le i\le N$, $G[\gamma^{-1}(i)]$ is a cograph with height at most $h$;
		\item for every $1\leq i<j\le N$, $G[\gamma^{-1}(i),\gamma^{-1}(j)]$ is a sob with height at most $h$.
	\end{itemize}
\end{definition}

Bounded-size splits appear in every class with bounded-size bounded linear cliquewidth decompositions, as we prove now.

\begin{lemma}
	\label{lem:dec2split}
	Let $\mathscr C$ be a class of graphs. 
	If $\mathscr C$  admits a parameter $2$ bounded-size bounded linear cliquewidth decompositions, then 
	there exists integers $N,h$ such that every graph $G\in\mathscr C$ has a split of size $N$ and height $h$.
\end{lemma}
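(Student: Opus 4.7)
The plan is to combine the given parameter-$2$ decomposition of $\mathscr C$ with the known decomposition of bounded linear cliquewidth graphs into cographs and sobs from~\cite{SODA_msrw,msrw}. Concretely, I would invoke the fact that there exist integers $N_0$ and $h_0$, depending only on the linear cliquewidth bound of the base class $\mathscr D$, such that every graph $H \in \mathscr D$ admits a split of size $N_0$ and height $h_0$, i.e.\ a vertex coloring on at most $N_0$ colors whose monochromatic parts are cographs of height at most $h_0$ and whose bichromatic bipartite parts are sobs of height at most $h_0$.

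Given $G \in \mathscr C$, I would fix a parameter-$2$ bounded-size decomposition $\gamma : V(G) \to [C]$ on $\mathscr D$, and then, for each unordered (possibly equal) pair $\{i,j\} \subseteq [C]$, pick a split $\mu_{\{i,j\}}$ of $G[\gamma^{-1}(\{i,j\})] \in \mathscr D$. The new coloring is the product
\[
\gamma'(v) := \bigl(\gamma(v),\, (\mu_{\{\gamma(v),j\}}(v))_{j \in [C]}\bigr),
\]
which takes at most $N := C \cdot N_0^{C}$ values. The body of the proof then consists in checking that $\gamma'$ is a split of size $N$ and height $h_0$.

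For vertices $u, v$ sharing a new color $(a, \vec{x})$, both belong to a single class of the split $\mu_{\{a,a\}}$ of $G[\gamma^{-1}(a)]$, so $G[(\gamma')^{-1}(a,\vec{x})]$ is an induced subgraph of a cograph of height at most $h_0$. For $u$ of color $(a, \vec{x})$ and $v$ of color $(b, \vec{y})$ with $a \neq b$, one has $\mu_{\{a,b\}}(u) = x_b$ and $\mu_{\{a,b\}}(v) = y_a$; if $x_b \neq y_a$ the bichromatic part is a semi-induced subgraph of the sob $G[\mu_{\{a,b\}}^{-1}(x_b),\mu_{\{a,b\}}^{-1}(y_a)]$, while if $x_b = y_a$ it is a semi-induced bipartite subgraph of the cograph $G[\mu_{\{a,b\}}^{-1}(x_b)]$.

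The main obstacle I foresee is the last case: verifying that a semi-induced bipartite subgraph of a cograph of height $h_0$ is itself a sob of height at most $h_0$. For this, I would argue that any bipartite cograph is a sob, by observing that in a cotree representing it each J-node acts exactly like a B-node in a bicotree, since joining on a bipartite graph only produces cross-color edges between different children. Combined with the obvious closure of cographs and sobs under taking (semi-)induced subgraphs without increase of height, this closes the verification and yields $N$ and $h := h_0$ uniformly over $G \in \mathscr C$.
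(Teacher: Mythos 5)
The decisive step of your argument---that every graph in the base class $\mathscr D$ of bounded linear cliquewidth admits a split of size $N_0$ and height $h_0$ depending only on the cliquewidth bound---is not a citable known result: the notions of sob and split are introduced in this very paper, and what \cite{SODA_msrw} (Theorem~1.2) actually provides is a parameter~$2$ bounded-size decomposition of bounded linear cliquewidth classes into classes of bounded \emph{embedded shrubdepth}, i.e., T-models of complexity $(N,h)$ for unions of two color classes. Your black box is thus essentially an instance of \zcref{lem:dec2split} itself (take $\mathscr C=\mathscr D$ with the trivial decomposition), and the missing bridge from T-models to splits is precisely the paper's \zcref{osd_dec}: restrict the model to one or two color classes (using \zcref{rem:restr}), set $\kappa(1,1)=\kappa(2,2)=\bot$ in the two-color case, and repair each node $u$ with $\kappa_u(2,1)=\top$ and $\kappa_u(1,2)=\bot$ by reversing the order $\lin$ on the children of $u$ and declaring $u$ an O-node. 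None of this appears in your proposal; in particular the treatment of ordered (C-type) nodes, which is where sobs differ from plain bicographs, is entirely hidden in the black box. Note also that the gap cannot be closed by a one-shot conversion of a single T-model per graph of $\mathscr D$, since graphs of bounded linear cliquewidth need not admit T-models of bounded complexity at all (long paths already fail); a bounded-size decomposition of the base class is genuinely needed, which is why the paper's proof performs two successive product colorings (first to get a parameter-$2$ bounded embedded shrubdepth decomposition of $\mathscr C$, then to convert pair-unions into splits via \zcref{osd_dec}).

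Apart from this, your product-coloring skeleton matches the paper's own (terse) argument, and you deserve credit for spelling out the case analysis it leaves implicit; but your key verification, while right in substance, is misstated. The reduction to ``any bipartite cograph is a sob'' is invalid: a semi-induced bipartite subgraph $H[A,B]$ of a cograph $H$ need not be a cograph, since deleting the intra-part edges can create an induced $P_4$ (take $H=K_4$ minus the edge $ad$, which is the cograph $\overline{K_2\cup 2K_1}$, with $A=\{a,c\}$ and $B=\{b,d\}$; then $H[A,B]$ is the path $a\,b\,c\,d$). The mechanism you sketch, however, proves the correct statement directly: two-color the leaves of a cotree of $H$ by the parts and reinterpret every J-node as a B-node; the resulting bicotree models exactly $H[A,B]$ (cross edges kept, intra-part edges dropped) without increasing the height. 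This is exactly the cograph case of \zcref{osd_dec}. With that restatement your verification goes through, and the unproven black box remains the one real gap.
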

\begin{proof}
	We first prove the result for graphs having T-models of complexity $(N,h)$ for some fixed integers $N$ and $h$.
	\begin{claim}
		\label{osd_dec}
		Assume 	 $G$ has a model $\mathfrak M$ of complexity $(N,h)$.
		
		Then, $G$ has a split of size $N$ and height $h$.
	\end{claim}
	\begin{claimproof}
		
		As noted in \zcref{rem:restr},  $\mathfrak M\langle V_i\rangle$ is a T-model of $G[V_i]$. Since
		this model uses only one color, it is a cotree.
		Similarly, $\mathfrak M\langle V_i\cup V_j\rangle$ is a T-model of $G[V_i\cup V_j]$, which
		uses only two colors, which we may relabel 1 and 2.
		By modifying the function $\kappa$ on internal nodes by letting $\kappa(1,1)=\kappa(2,2)=\bot$, we get a T-model of $G[V_i,V_j]$.
		The only nodes that are not readily of one of the three allowed
		types are those vertex $u$ such that $\kappa_u(2,1) = \top$ and $\kappa_u(1,2)=\bot$. For those,
		we can simply consider the model obtained by reversing the order $\lin$ on children
		of $u$ and making $u$ a node of type O, which models the same graph. Hence, repeating
		this operation for every problematic node, we obtain a bicotree.
		
		Both the cotree and bicotree are obtained by removing nodes from the tree of $\mathfrak
		M$, hence their heights are at most $h$.
	\end{claimproof}
	
	Taking a product coloring of the parameter $2$ bounded-size bounded linear cliquewidth decomposition and of a
	parameter $2$ bounded-size bounded embedded
	shrubdepth decomposition of the base class with bounded linear cliquewidth (whose existence is proved in  \cite[Theorem 1.2]{SODA_msrw}), we get that  $\mathscr C$ has a parameter $2$ bounded embedded
	shrubdepth decomposition. Taking again a product coloring using \zcref{osd_dec}, $\mathscr C$ admits splits of bounded size and height.
\end{proof}
Note that the converse is not true: the class of all $2$-subdivided graphs admits splits of size $2$ and height $2$, but does not admit parameter $2$ bounded-size bounded linear cliquewidth decompositions.

\subsection{Amalgams of cotrees and bicotrees}
Let $V$ be a set with a partition $V_1,\dots,V_n$.
For $1\le i\le n$, let  $\mathbf T_i$ be a cotree  with ground $V_i$ and, for $1\le i< j\le n$, let 
$\mathbf T_{i,j}$ be a bicotree with ground $V_i\cup V_j$ and color classes (with respect to
$\gamma$) $V_i$ and $V_j$.
(We assume that the cotrees and bicotrees are encoded as $\sigma_3$-structures.)
We define the \emph{amalgam}
$\mathbf M:=\am{V,(\mathbf T_i),(\mathbf T_{i,j})}$ as the structure (see \zcref{fig:GC}) obtained from the disjoint union of $V$, the $\mathbf T_i$'s and the $\mathbf T_{i,j}$'s by making each $v\in V_i$ adjacent to the corresponding vertex in $\ground(\mathbf T_i)$ and in all of the $\ground(\mathbf T_{i,j})$ (for $j\neq i$), defining the new predicates $L_1,\dots,L_n$ by $L_i(\mathbf M)=V_i$, and by redefining
the predicate $\ground$ so that $\ground(\mathbf M)=V$.
Formally speaking, we used some copies of the set and structures involved. Thus, it will be useful to define the \emph{injection system} of  $\mathbf M$ as the tuple $(\iota,(\iota_i),(\iota_{i,j}))$ of canonical injections, where 
$\iota$ maps $V$ to $\ground(\mathbf M)$, $\iota_i$ maps $V_i$ to $\ground(\mathbf T_i)$, and $\iota_{i,j}$ maps $V_i\cup V_j$ to $\ground(\mathbf T_{i,j})$.
The \emph{height} of the amalgam $\mathbf M$ is  the maximum height of the $\mathbf T_i$'s and the $\mathbf T_{i,j}$'s.
We say that $\mathbf M$ is \emph{clean} if all the $\mathbf T_i$'s and all the $\mathbf T_{i,j}$'s are clean.
Moreover, for a subset $W$ of $V$ we define
\[
\am{V,(\mathbf T_i),(\mathbf T_{i,j})}\langle W\rangle:=\am{W,(\mathbf T_i\langle W\rangle),(\mathbf T_{i,j}\langle W\rangle)}.
\]

\begin{figure}[ht]
	\begin{center}
		\includegraphics[width=.75\textwidth]{amalg}
	\end{center}
	\caption{The amalgam $\am{V,(\mathbf T_i),(\mathbf T_{i,j})}$. The green parts are the grounds of the structures $\mathbf T_i$ and $\mathbf T_{i,j}$. The ground of the amalgam is the set $V$ (on the dashed circle).}
	\label{fig:GC}
\end{figure}

Note that $\am{V,(\mathbf T_i),(\mathbf T_{i,j})}\langle W\rangle$ is an induced substructure of $\am{V,(\mathbf T_i),(\mathbf T_{i,j})}$.

We define the interpretation $\interp{SBuild}_{n,h}$
of graphs in amalgams as follows: the vertex set
of $\interp{SBuild}_{n,h}(\mathbf M)$ is $\ground(\mathbf M)$.
For any two vertices $x,y\in\ground(\mathbf M)$ we define $\mathbf T(x,y)$ as the connected component of $\mathbf M\setminus \ground(\mathbf M)$ containing a neighbor $x'$ of $x$ in $\mathbf M$ and neighbor $y'$ of $y$ in $\mathbf M$. 
(Note that the connected components of $\mathbf M\setminus \ground(\mathbf M)$ have diameter at most $2h$, hence are first-order definable.)
Intuitively, $\mathbf T(x,y)$ selects the tree among the $T_i$'s and the $T_{i,j}$'s that can be used to decide whether $x$ and $y$ are adjacent.

Applying the interpretation $\build[1]$ or $\build[2]$ on $\mathbf T(x,y)$ (depending on whether $x$ and $y$ belong to the same set $L_i(\mathbf M)$ or not), we define $\rho_E(x,y)$ as the adjacency of $x'$ and $y'$.

\begin{lemma}
	\label{lem:Sbuild}
	Let $\gamma$ be a split of size $n$ and height at most $h$ of a graph $G$.
	For $1\le i\le n$,  let $\mathbf T_i$ be a (cotree) T-model of $G[\gamma^{-1}(i)]$ with height at most $h$ and, for $1\le i<j\le n$,  let $\mathbf T_{i,j}$ be a (bicotree) T-model of $G[\gamma^{-1}(i),\gamma^{-1}(j)]$ with height at most $h$.
	
	Then, $\interp{SBuild}_{n,h}(\am{V(G),(\mathbf T_i),(\mathbf T_{i,j})})=G$.
\end{lemma}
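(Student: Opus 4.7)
The plan is to verify the two defining conditions of the equality $\interp{SBuild}_{n,h}(\am{V(G),(\mathbf T_i),(\mathbf T_{i,j})})=G$ separately: first that the vertex sets agree, then that the edge relations agree, by a case analysis along the split.

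\smallskip

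Let $\mathbf M:=\am{V(G),(\mathbf T_i),(\mathbf T_{i,j})}$, write $(\iota,(\iota_i),(\iota_{i,j}))$ for its injection system, and set $V_i:=\gamma^{-1}(i)$. By construction of the amalgam, $\ground(\mathbf M)=\iota(V(G))$, so the vertex set of the interpreted graph is $V(G)$ (identified via $\iota$). It therefore remains to check, for every pair $x,y\in V(G)$, that the formula $\rho_E(x,y)$ holds in $\mathbf M$ exactly when $xy\in E(G)$.

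\smallskip

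For this I first need to identify $\mathbf T(x,y)$. The connected components of $\mathbf M\setminus\ground(\mathbf M)$ are precisely the copies of the cotrees $\mathbf T_i$ and of the bicotrees $\mathbf T_{i,j}$. By the construction of the amalgam, $\iota(V_i)$ is adjacent (in $\mathbf M$) exactly to $\ground(\mathbf T_i)$ and to $\ground(\mathbf T_{i,j})$ for $j\neq i$; hence the set of $\ground(\mathbf M)$-vertices adjacent to any $\mathbf T_k$ lies entirely in $L_k(\mathbf M)$, while the set of $\ground(\mathbf M)$-vertices adjacent to any $\mathbf T_{i,j}$ lies in $L_i(\mathbf M)\cup L_j(\mathbf M)$ and meets both. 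Thus, in the two relevant cases:
\begin{itemize}
\item if $x,y\in\iota(V_i)$, the unique component containing a neighbor of $x$ and a neighbor of $y$ and such that \emph{all} its $\mathbf M$-neighbors lie in $L_i(\mathbf M)$ is $\mathbf T_i$;
\item if $x\in\iota(V_i)$ and $y\in\iota(V_j)$ with $i\ne j$, then a component contains neighbors of both $x$ and $y$ only if its $\mathbf M$-neighborhood meets both $L_i(\mathbf M)$ and $L_j(\mathbf M)$, and the only such component is $\mathbf T_{i,j}$.
\end{itemize}
Since the first-order distinction ``same $L_i$ or not'' is exactly the case split mentioned in the definition of $\interp{SBuild}_{n,h}$, the interpretation unambiguously selects $\mathbf T(x,y)=\mathbf T_i$ in the first case and $\mathbf T(x,y)=\mathbf T_{i,j}$ in the second.

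\smallskip

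Once the tree is identified, adjacency is read off by $\build[1]$ or $\build[2]$ applied to the relevant pair of leaves. In the first case, $x':=\iota_i(x)$ and $y':=\iota_i(y)$ are leaves of $\mathbf T_i$; since $\mathbf T_i$ is a T-model of $G[V_i]$, we have $x'y'\in E(\build[1](\mathbf T_i))$ iff $xy\in E(G[V_i])=E(G)\cap\binom{V_i}{2}$. In the second case, $x':=\iota_{i,j}(x)$ and $y':=\iota_{i,j}(y)$ are leaves of $\mathbf T_{i,j}$ with colors $1$ and $2$ (under $\gamma$), and since $\mathbf T_{i,j}$ is a (bicotree) T-model of the semi-induced bipartite subgraph $G[V_i,V_j]$, we obtain $x'y'\in E(\build[2](\mathbf T_{i,j}))$ iff $xy\in E(G[V_i,V_j])$, which is $E(G)\cap(V_i\times V_j\cup V_j\times V_i)$. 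In either case $\rho_E(x,y)$ recovers exactly the adjacency in $G$, which concludes the proof.

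\smallskip

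The main conceptual point—and the only place where anything non-trivial happens—is the unique identification of $\mathbf T(x,y)$ sketched above; everything else is a direct unfolding of the definitions of the amalgam, of $\build[1]$, of $\build[2]$, and of the split $\gamma$.
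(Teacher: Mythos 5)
Your proposal is correct and follows the same route as the paper, whose entire proof is the one-line remark that the statement is immediate from the definition of $\interp{SBuild}_{n,h}$. You simply unfold that definition carefully — including the useful observation that $\mathbf T(x,y)$ is disambiguated in the same-class case by requiring all its $\mathbf M$-neighbors to lie in $L_i(\mathbf M)$, a point the paper's definition of the interpretation leaves implicit.
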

\begin{proof}
	The property is immediate from the definition of the interpretation $\interp{SBuild}_{n,h}$.
\end{proof}

We now prove that, conversely, in the case where all the $\mathbf T_i$ and $\mathbf T_{i,j}$ are clean, then $\am{V(G),(\mathbf T_i),(\mathbf T_{i,j})}$ can be obtained as a transduction of $G$.

\begin{lemma}
	\label{lem:amalg}
	Let $\mathscr C$ be a class of graphs admitting splits of size $n$ and height $h$.
    
    There is a transduction $\sparse_{n,h}$ such that 
    if $G$ has a split $\gamma$ of size $n$ and height $h$, $\mathbf T_i$ is a clean model of $G[\gamma^{-1}(i)]$, and  $\mathbf T_{i,j}$ is a clean model of $G[\gamma^{-1}(i),\gamma^{-1}(j)]$, then 
    $\am{V(G),(\mathbf T_i),(\mathbf T_{i,j})}\in \sparse_{n,h}(G)$.
\end{lemma}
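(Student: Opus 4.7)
The plan is to build $\sparse_{n,h}$ in normal form \emph{[interpretation $\circ$ monadic expansion $\circ$ copying]} so that it runs, in parallel and in separate copies, one instance of $\interp{Cotree}_h$ on each cograph $G[\gamma^{-1}(i)]$ and one instance of $\interp{Model}_h$ on each sob $G[\gamma^{-1}(i),\gamma^{-1}(j)]$, and then glues the resulting trees to the original vertex set via the clone relation produced by the copying. By \zcref{rem:cograph_model} and \zcref{thm:bico_model}, a suitable setting of the monadic choices made by $\interp{Cotree}_h$ (respectively $\interp{Model}_h$) reaches the prescribed $\mathbf T_i$ (respectively $\mathbf T_{i,j}$), so a correct setting of the guesses in our combined expansion will produce the target amalgam.

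Concretely, I first apply a $k$-fold copying with $k := 1+n+\binom{n}{2}$, using distinguished unary predicates $M_\ground, M_1, \dots, M_n, (M_{i,j})_{i<j}$ to tag the copies. The built-in clone relation of the copying already realises the injection system of the amalgam: the clone of $v \in V(G)$ sitting in copy $M_i$ (resp.\ $M_{i,j}$, resp.\ $M_\ground$) plays the role of $\iota_i(v)$ (resp.\ $\iota_{i,j}(v)$, $\iota(v)$). I then apply one monadic expansion that (a) marks the split coloring by unary predicates $C_1, \dots, C_n$ with $C_i = \gamma^{-1}(i)$, and (b) for every $i\in[n]$ and every pair $i<j$ provides a \emph{separate family} of colour predicates, carrying the index $i$ or $(i,j)$, intended to play the role of the monadic guesses performed internally by $\interp{Cotree}_h$ on $G[\gamma^{-1}(i)]$ and by $\interp{Model}_h$ on $G[\gamma^{-1}(i),\gamma^{-1}(j)]$. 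Finally, a single interpretation assembles the output: inside the copy tagged $M_i$ it defines $\mathbf T_i$ by the formulas of $\interp{Cotree}_h$, with every atom relativised to $C_i\wedge M_i$; inside the copy tagged $M_{i,j}$ it defines $\mathbf T_{i,j}$ by the formulas of $\interp{Model}_h$, with atoms relativised to $(C_i\vee C_j)\wedge M_{i,j}$ and the edge atom $E(x,y)$ replaced by its semi-induced bipartite version $E(x,y)\wedge\bigl((C_i(x)\wedge C_j(y))\vee(C_j(x)\wedge C_i(y))\bigr)$; on the copy tagged $M_\ground$ it keeps all vertices, discards all edges, and sets $\ground := M_\ground$ and $L_i := C_i \wedge M_\ground$; and it defines the final edge relation as the union of the tree edges just produced and the clone relation restricted to pairs having one endpoint in $M_\ground$ and the other in some $M_i$ or $M_{i,j}$.

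The only genuine obstacle is bookkeeping: one must make sure that the simultaneous copies of $\interp{Cotree}_h$ and $\interp{Model}_h$ running in different copies do not interfere, and that, in the bipartite copy $M_{i,j}$, the instance of $\interp{Model}_h$ sees $G[\gamma^{-1}(i),\gamma^{-1}(j)]$ rather than $G[\gamma^{-1}(i)\cup\gamma^{-1}(j)]$. Both issues are handled by the separation of colour families per block and per pair, together with the copy predicates $M_i$ and $M_{i,j}$: every quantifier of every internal formula can be relativised to the intended copy and intended colour class, and the semi-induced bipartite substitution for $E$ corrects the edge relation for the pair copies. Once this is done, correctness is immediate: choosing, in step (b), the guesses that drive $\interp{Cotree}_h$ to output $\mathbf T_i$ and $\interp{Model}_h$ to output $\mathbf T_{i,j}$, one obtains a structure isomorphic to $\am{V(G),(\mathbf T_i),(\mathbf T_{i,j})}$ whose injection system is realised by the clones, so this amalgam lies in $\sparse_{n,h}(G)$.
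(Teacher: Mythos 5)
Your overall architecture is the same as the paper's: one big transduction in normal form, copies tagged per block and per pair, the split coloring and the internal monadic guesses of $\interp{Cotree}_h$ and $\interp{Model}_h$ provided by a single combined expansion with disjoint predicate families, the inner interpretation formulas relativized to the intended copy and color class, and the clone relation realizing the injection system; correctness then rests, exactly as in the paper, on the fact that every clean cotree (resp.\ bicotree) of height at most $h$ is accessible by \zcref{rem:cograph_model} (resp.\ \zcref{thm:bico_model}). Your explicit replacement of $E$ by its semi-induced bipartite version in the pair copies is a point the paper leaves implicit, and it is needed for $\interp{Model}_h$ to see the sob $G[\gamma^{-1}(i),\gamma^{-1}(j)]$ rather than the induced subgraph.

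There is, however, one step that fails as written: the copy count $k=1+n+\binom{n}{2}$. You treat $\interp{Cotree}_h$ and $\interp{Model}_h$ as if they were coloring-plus-interpretation, but both are genuine transductions whose first component is a copying operation, and this copying cannot be dispensed with: the universe of $\mathbf T_i$ consists of the leaves $\gamma^{-1}(i)$ \emph{together with the internal tree nodes}, so $|\mathbf T_i|>|\gamma^{-1}(i)|$ whenever $|\gamma^{-1}(i)|\ge 2$. In your construction the copy tagged $M_i$, after relativization to $C_i\wedge M_i$, contributes at most $|\gamma^{-1}(i)|$ elements, so there is simply no room for the internal nodes of $\mathbf T_i$ (and likewise for $\mathbf T_{i,j}$); no choice of monadic guesses can repair a cardinality deficit. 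The fix is the one the paper makes: write $\interp{Cotree}_h=\interp{I}_1\circ\Gamma_1\circ\interp{C}_p$ and $\interp{Model}_h=\interp{I}_2\circ\Gamma_2\circ\interp{C}_q$, allocate a \emph{group} of $p$ copies for each $i$ and of $q$ copies for each pair $i<j$ (so $k=1+np+\binom{n}{2}q$), relativize the formulas of $\interp{I}_1,\interp{I}_2$ to these groups (with the inner copy predicates and the clone relation restricted within each group), and keep a single extra copy for the ground. With that correction your bookkeeping goes through and the argument coincides with the paper's proof.
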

\begin{proof}
		The transduction $\interp{Cotree}_h$ is the composition of a copy operation $\mathsf C_{p}$, a monadic expansion $\Gamma_1$, and an interpretation $\mathsf I_1$ defined by formulas  $\nu^1$ and $\rho_R^1$; the transduction $\interp{Bicotree}_h$ is the composition of a copy operation $\mathsf C_{q}$, a monadic expansion $\Gamma_2$, and an interpretation $\mathsf I_2$ defined by formulas  $\nu^2$ and $\rho_R^2$. We can assume that the set  $\mathcal \tau_1$ of unary predicates involved in $\Gamma_1$ and the set $\tau_2$ of unary predicates involved in  $\Gamma_2$ are distinct.
		
	Let $k=1+np+\binom{n}{2}q$.
	Let $\Lambda$ be a monadic expansion, defining  the predicates in \[
	\tau_1\cup\tau_2\cup\{L_i: 1\le i\le n\}\cup\{D_i\colon 1\le i\le n\}\cup \{D_{i,j}\colon 1\le i<j\le n\}.
	\]
	
For formulas $\varphi(\bar x)$	and $\psi(y)$, denote by $\rel{\varphi(\bar x)}{\psi}$ the $\psi$-relativization of $\varphi$, that is the formula 
recursively defined by 
\begin{align*}
\rel{\exists y\ \varphi(\bar x,y)}{\psi}&:=\exists y\ \psi(y)\wedge \rel{\exists y\ \varphi(\bar x,y)}{\psi}\\
\rel{\forall y\ \varphi(\bar x,y)}{\psi}&:=\forall y\ \psi(y)\rightarrow \rel{\varphi(\bar x,y)}{\psi}
\intertext{and, for atomic $R$,}
\rel{R(\bar x)}{\psi}&:=R(\bar x)\wedge\bigwedge_{i\in[|\bar x|]}\psi(x_i).
\end{align*}

	The interpretation $\mathsf I$ is defined by 
	\begin{align*}
\nu(x)&:=\bigvee_i\rel{\nu^1(x)}{D_i}\vee\bigvee_{i<j}\rel{\nu^2(x)}{D_{i,j}}\vee \bigvee_i L_i(x), \\
\rho_L(x)&:=\bigvee_i L_i(x),\\
\rho_E(x,y)&:=\bigvee_i\rel{\rho_E^1(x,y)}{D_i}\vee\bigvee_{i<j}\rel{\rho_E^2(x)}{D_{i,j}}\\
&\qquad \qquad\qquad
 \vee\bigl(E(x,y)\wedge\bigvee_i (L(x)\cap D_i(x)\cap L_i(y)\vee L(y)\cap D_i(y)\cap L_i(x))\bigr)
\intertext{and, for $R\notin \{L,E\}$, }
\rho_R(\bar x)&=\bigvee_i\rel{\nu^1(\bar x)}{D_i}\vee\bigvee_{i<j}\rel{\nu^2(\bar x)}{D_{i,j}}.
\end{align*}

We claim that the transduction $\sparse_{n,h}:=\interp{I}\circ\Gamma\circ\interp{C}_k$ has the required properties:
	There is a monadic expansion such that
the vertex $v$ of the first copy of $G$ is marked $L_i$ if $\gamma(v)=i$, the vertex sets of the next 
$np$ copies are marked $D_1,\dots,D_n$ by groups of $p$, the remaining copies are marked $D_{1,2},\dots,D_{n-1,n}$ by groups of $q$,  the copies marked $D_i$ are  marked by predicates in $\tau_1$ to be interpreted as $\mathbf T_i$ by $\interp{I}_1$ (this is possible as every clean cotree model of height at most $h$ is accessible by $\interp{Cotree}_h$), and  copies marked $D_{i,j}$ are marked by predicates in $\tau_2$ to be interpreted as $\mathbf T_{i,j}$
by $\interp{I}_2$ (this is possible, as every clean bicotree model of height at most $h$ is accessible by $\interp{Bicotree}_h$  by \zcref{thm:bico_model}). With this particular monadic expansion,  the structure interpreted by 
$\interp{I}$ is $\am{V(G),(\mathbf T_i),(\mathbf T_{i,j}}$.
	\end{proof}

	\begin{theorem}
		\label{thm:amalg_model}
		Let $\mathscr C$ be a class of graphs with splits of order $n$ and height  $h$.
		The pair 
		$(\interp{SBuild}_{n,h},\sparse_{n,h})$ is a modelization  of $\mathscr C$ in amalgams with height at most $h$ such that every clean amalgam model of a graph in $\mathscr C$ is accessible.
		
		Moreover, $(\mathbf T,X)\mapsto \mathbf T\langle X\rangle$ is a model restriction. 
	\end{theorem}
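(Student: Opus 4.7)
The plan is to bolt together the four results already established for the constituent pieces: the modelization of shallow cographs (\zcref{rem:cograph_model}), the modelization of sobs (\zcref{thm:bico_model}), the reconstruction lemma \zcref{lem:Sbuild}, and the assembly lemma \zcref{lem:amalg}. Nothing beyond routine verification should be needed; the statement is essentially a packaging result.

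First I would check the interpretation side of the modelization. By construction of $\interp{SBuild}_{n,h}$, the vertex set of $\interp{SBuild}_{n,h}(\mathbf M)$ is $\ground(\mathbf M)$, which is exactly the condition required by \zcref{def:modelization}. So only the transduction side and the accessibility of clean amalgams have to be argued.

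For accessibility, I would fix $G\in\mathscr C$ together with a split $\gamma$ of size $n$ and height $h$. For every $i\in[n]$, the graph $G[\gamma^{-1}(i)]$ is a cograph of height at most $h$, so \zcref{rem:cograph_model} gives a clean cotree $\mathbf T_i$ which is accessible under $\interp{Cotree}_h$. For every $i<j$, the bipartite graph $G[\gamma^{-1}(i),\gamma^{-1}(j)]$ is a sob of height at most $h$; by \zcref{lem:cleanbic} it has a clean bicotree $\mathbf T_{i,j}$ of height at most $3h$, and \zcref{thm:bico_model} makes this bicotree accessible under $\interp{Model}_{3h}$. Feeding these clean cotrees and bicotrees into \zcref{lem:amalg} (with its height parameter taken to be $\max(h,3h)=3h$) puts the amalgam $\mathbf M:=\am{V(G),(\mathbf T_i),(\mathbf T_{i,j})}$ in $\sparse_{n,3h}(G)$, while \zcref{lem:Sbuild} confirms $\interp{SBuild}_{n,3h}(\mathbf M)=G$. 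The same reasoning, applied backwards, handles the second clause: if $\mathbf M=\am{V(G),(\mathbf T_i),(\mathbf T_{i,j})}$ is a clean amalgam model of $G$, then each $\mathbf T_i$ is a clean cotree of $G[\gamma^{-1}(i)]$ (accessible by \zcref{rem:cograph_model}) and each $\mathbf T_{i,j}$ is a clean bicotree of $G[\gamma^{-1}(i),\gamma^{-1}(j)]$ (accessible by \zcref{thm:bico_model}), so \zcref{lem:amalg} gives $\mathbf M\in\sparse_{n,h}(G)$.

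For the model restriction statement, I would observe that by the very definition of the $\langle\cdot\rangle$ notation, $\mathbf M\langle X\rangle=\am{X,(\mathbf T_i\langle X\rangle),(\mathbf T_{i,j}\langle X\rangle)}$ is an induced substructure of $\mathbf M$. \zcref{rem:restr} applied to each component tells us that $\mathbf T_i\langle X\rangle$ is a T-model of $G[\gamma^{-1}(i)\cap X]$ and that $\mathbf T_{i,j}\langle X\rangle$ is a T-model of $G[\gamma^{-1}(i)\cap X,\gamma^{-1}(j)\cap X]$. A second application of \zcref{lem:Sbuild} to the amalgam of these restricted models then yields $\interp{SBuild}_{n,h}(\mathbf M\langle X\rangle)=G[X]=\interp{SBuild}_{n,h}(\mathbf M)[X]$, which is exactly the condition for a model restriction.

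The only genuinely delicate point I see is bookkeeping of the height bound: the cleaning procedure in \zcref{lem:cleanbic} inflates sob height $h$ into bicotree height $3h$, so when quoting ``height at most $h$'' in the statement one really means a fixed bound depending only on $h$ (here $3h$), and both $\interp{SBuild}$ and $\sparse$ must be indexed by that common bound. Once this constant is chosen consistently, everything else is a direct composition of previously proved facts.
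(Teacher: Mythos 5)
Your proposal is correct and takes essentially the same route as the paper, whose proof likewise derives the theorem directly from \zcref{lem:Sbuild,lem:amalg} and obtains the model-restriction claim from the defining equality $\am{V,(\mathbf T_i),(\mathbf T_{i,j})}\langle W\rangle=\am{W,(\mathbf T_i\langle W\rangle),(\mathbf T_{i,j}\langle W\rangle)}$ together with \zcref{rem:restr}. Your bookkeeping remark about the height constant (clean bicotrees of height $3h$ via \zcref{lem:cleanbic}, so the common parameter must really be $3h$) is a valid point that the paper glosses over in the statement but implicitly concedes later when it invokes $\sparse_{n,3h}$ in the proof of \zcref{strong_thm}; it affects only the choice of the fixed constant, not the argument.
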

	\begin{proof}
		The result is a direct consequence of \zcref{lem:Sbuild,lem:amalg}.
		That $(\mathbf T,X)\mapsto \mathbf T\langle X\rangle$ is a model restriction
		follows from the fact that $\am{V,(\mathbf T_i),(\mathbf T_{i,j})}\langle W\rangle$  is a substructure of $\am{V,(\mathbf T_i),(\mathbf T_{i,j})}$ and the (defining)
		equality of $\am{V,(\mathbf T_i),(\mathbf T_{i,j})}\langle W\rangle$ and $\am{W,(\mathbf T_i\langle W\rangle),(\mathbf T_{i,j}\langle W\rangle)}$.
\end{proof}

\thmMain
\begin{proof}
According to \zcref{lem:dec2split}, there exist integers $n$ and $h$ such that $\mathscr C$ admits splits of size $n$ and height $h$.
According to \zcref{thm:amalg_model}, $(\interp{SBuild}_{n,h},\sparse_{n,h})$ is a transduction pairing of $\mathscr C$ and the class $\mathscr C^\text{mod}$ of its accessible models.
By construction, the $\vartriangleleft$-reduct of 
$\mathbf M\in \mathscr C^\text{mod}$ is a disjoint union of linear orders. On the other hand, the $E$-reduct of $\mathscr C^\text{mod}$ is a degenerate class (as obtained by taking the union of
a bounded number of trees and stars) that is a transduction of a class with bounded linear cliquewidth. It follows that the $E$-reduct of $\mathscr C^\text{mod}$ has bounded pathwidth.
\end{proof}

In order to derive the alternate 
characterization of classes of graphs with
bounded linear cliquewidth given in \zcref{lcw_pairing_poset},
we show how we can in general translate pairings with couplings into
pairings with posets only.

\begin{restatable}{lemma}{lemorderall}
	\label{lem:order_all}
    Let $\mathscr C$ be a class of graphs with a modelization in a class $\mathscr D$, which is a coupling of a class $\mathscr P$ of posets and a class $\mathscr G$ of colored graphs.
	
	Then, there exists a class $\mathscr P'$ of colored posets (with unary predicates $\ground,P_2,P_3,P_4$), such that 
    $\mathscr C$ has a modelization in $\mathscr P'$.
    
	Moreover, if both $\mathscr G$ and the class of the cover graphs of the posets in $\mathscr P$ are weakly sparse, we can require that the class of the cover graphs of the posets in $\mathscr P'$ is also weakly sparse.
\end{restatable}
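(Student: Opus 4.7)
The plan is, for each accessible model $\mathbf M\in\mathscr D$ of a graph $G\in\mathscr C$, to encode $\mathbf M$ into a strictly larger colored poset $\mathbf M'$ whose partial order $\prec$ and four unary predicates $\ground,P_2,P_3,P_4$ jointly first-order define every relation and every unary predicate of $\mathbf M$. Since $\mathscr D$ is a coupling over a fixed finite signature, the only data to encode are the partial order $<_P$ from $\mathscr P$, the edge relation $E_{\mathscr G}$ from $\mathscr G$, and the finitely many color predicates $U_1,\dots,U_k$ of $\mathscr G$.

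First I would adjoin three kinds of fresh gadget points to $V:=\ground(\mathbf M)$ (which keeps the color $\ground$): for every edge $\{u,v\}\in E_{\mathscr G}(\mathbf M)$, an \emph{edge-point} $f_{uv}$ colored $P_2$ placed immediately below both $u$ and $v$; for every cover $u<:_P v$, an \emph{order-point} $c_{uv}$ colored $P_3$ placed strictly between $u$ and $v$, thereby subdividing the cover; and for every $v\in V$ and every $i\in[k]$ with $v\in U_i(\mathbf M)$, a pendent $P_4$-chain of length $i$ whose top element covers $v$. The new order $\prec$ is the transitive closure of these declared covers; no cycle is introduced since every gadget point lies either strictly below ground points or strictly inside an already-asserted cover pair. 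I would then let $\mathscr P'$ be the class of all such $\mathbf M'$.

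The modelization pair $(\interp{I}',\interp{T}')$ is then obtained by composition with the given modelization $(\interp{I},\interp{T})$ of $\mathscr C$ in $\mathscr D$. The atomic interpretation $\mathscr P'\to\mathscr D$ used inside $\interp{I}'$ takes as universe $\ground(\mathbf M')$ and recovers $\mathbf M$ as follows: the partial order $<_P$ is the restriction of $\prec$ to ground points, since any ascending $\prec$-path between two ground points must alternate ground and $P_3$-gadget and therefore tracks a chain of covers of $<_P$; the edge relation is $\rho_E(x,y):=\exists f\,(P_2(f)\wedge f<:x\wedge f<:y)$, where the cover relation $<:$ of $\prec$ is itself first-order in $\prec$; and each color $U_i(x)$ is expressed by the existence of a maximal $P_4$-chain of length exactly $i$ whose top covers $x$, which is first-order because $k$ is a class-wide constant. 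Dually, $\interp{T}'$ is the composition of $\interp{T}$ with a transduction $\mathscr D\to\mathscr P'$ that implements the gadget construction via copying (to create the fresh points) and coloring (to mark their types and to record which gadget attaches to which endpoints). Verifying that $\interp{I}'(\interp{T}'(G))\ni G$ is then immediate from the construction.

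For the moreover part, I would observe that the cover graph of $\prec$ decomposes as the edge-disjoint union of the $1$-subdivision of the cover graph of $<_P$ (through the $P_3$-points), the $1$-subdivision of the graph of $E_{\mathscr G}$ (through the $P_2$-points), and disjoint pendent paths (the $P_4$-chains). Any $1$-subdivision of a simple graph is $C_4$-free, since two $4$-cycle subdivision vertices would have to subdivide a common edge, and pendent paths are trees, so each of the three components is weakly sparse unconditionally. A standard bipartite Ramsey argument then shows that the union of a bounded number of weakly sparse graphs is weakly sparse, giving the conclusion. The main delicate point of the whole argument is verifying that $\prec$ is a valid partial order whose cover relation is exactly the one described: this reduces to a routine case analysis showing that no gadget point can lie strictly between an asserted cover pair, since $P_2$- and $P_4$-gadget points sit strictly below ground points while $P_3$-gadget points are inserted only inside existing covers of $<_P$.
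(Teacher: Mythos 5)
There is a genuine gap, and it sits at the heart of your construction: a transduction cannot create one fresh element per edge or per cover pair. By definition (\zcref{sec:trans}), every transduction factors as a $k$-copying followed by a coloring and an interpretation, so the output domain has size at most $k\cdot|M|$ for a fixed constant $k$. Your $\mathbf M'$ contains a point $f_{uv}$ for every edge of $E_{\mathscr G}(\mathbf M)$ and a point $c_{uv}$ for every cover of $<_P$; the lemma makes no sparsity assumption on $\mathscr G$, and even under the ``moreover'' hypotheses weakly sparse graphs may have superlinearly many edges (K\H{o}v\'ari--S\'os--Tur\'an), so no fixed amount of copying can supply these gadget points, and the required modelization transduction $\interp{T}'$ from $\mathscr C$ to your $\mathscr P'$ does not exist. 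This is precisely why the paper's proof uses $\interp{C}_4$ to create four clones $u_1,u_2,u_3,u_4$ of each element, keeps the original poset \emph{verbatim} on the $P_4$-clones (no subdivision), and encodes each edge $E(u,v)$ not by a new element but by order relations between existing clones (covers from the $P_2$-clone of $u$ to the $P_3$-clone of $v$), recovered by the formula $\exists u',v'\ P_2(u')\wedge P_3(v')\wedge(u'<u)\wedge(v'<v)\wedge(u'<v')$. A telltale sign is your ``moreover'' analysis: because you subdivide every cover and every edge, your cover graphs come out weakly sparse \emph{unconditionally}, whereas the true statement needs the hypotheses on $\mathscr G$ and on the cover graphs of $\mathscr P$ (the paper handles this with a pigeonhole argument over the classes $P_i$, $P_j$); an encoding whose sparsity comes for free is exactly one that is too large to transduce.

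There is also a correctness defect independent of size. Your recovery formula $\rho_E(x,y):=\exists f\,(P_2(f)\wedge f<:x\wedge f<:y)$ fails when an edge joins two $<_P$-comparable vertices: nothing in the definition of a coupling forbids $E(u,v)$ together with $u<_P v$, and then $f_{uv}\prec u\prec v$, so $(f_{uv},v)$ is not a cover of $\prec$ and the edge is lost. Your closing case analysis only verifies that no \emph{gadget} point lies strictly inside an asserted cover pair, but here it is the ground point $u$ that intrudes. (This could be repaired by attaching the edge gadgets to pendent ``port'' points below each vertex rather than to the vertices themselves --- in effect re-inventing the paper's $P_2$/$P_3$ clones --- but that does not rescue the cardinality obstruction above.) Minor further slips: you set $V:=\ground(\mathbf M)$ although the gadgets and the color chains must be attached to the whole domain of $\mathbf M$, not just its ground; and your reconstruction interpretation should take as universe the non-gadget points (those satisfying none of $P_2,P_3,P_4$) rather than $\ground(\mathbf M')$, since $\interp{I}$ needs all of $\mathbf M$, including its non-ground part, to rebuild $G$.
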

\begin{proof}
	Let $\{<\}$, $\{E\}$, and
	$\varsigma=\{<,E\}$ be the signatures of the structures in $\mathscr P$ and $\mathscr P'$, in $\mathscr G$, and in $\mathscr D$, respectively.
	
	We first define the transduction $\interp{T}$, which is the composition of a $\interp{C}_4$ copying operation introducing the binary relation $F$ between clones, a coloring introducing
    predicates $P_1, P_2, P_3, P_4$, and the interpretation of the ground $\ground$ and poset $<$ by the following formula:
	\begin{align*}
        \rho_{\ground}(u)&:=P_1(u)\wedge \ground(u)\\
		\rho_<(u,v)&:=\biggl(P_4(u)\wedge P_4(v)\wedge (u<v)\biggr)\vee
		\biggl(F(u,v)\land P_1(v)\land \bigvee_{i\in \{2,3,4\}}P_i(u)\biggr)\\
		&\qquad\vee \biggl(P_2(u)\wedge (P_3(v)\vee P_1(v))\wedge (\exists v'\ P_2(v')\wedge F(v,v')\wedge E(u,v'))\biggr)
	\end{align*}
	The class $\mathscr P'\subseteq \interp{T}(\mathscr D)$ is then defined as the image of $\interp T$ where
    the predicate $P_i$ marks the $i$-th copies produced by $\interp C_4$. The construction is illustrated in \zcref{fig:order_all}.
	The meaning of $\rho_<$ is as follows:
	we keep the poset part of $\mathbf M$ between the $P_4$-vertices. The $P_1$ vertices will form an antichain of maximum elements, each element being greater that its clones. Then, we add a cover between the clone in $P_2$ of a vertex $u$ and the clone in $P_3$ of a vertex $v$ if $u$ and $v$ are adjacent in $\mathbf M$ (by relation $E$). Adding this cover, we have to also add the order relation between the clone of $u$ in $P_2$ and the clone of $v$ in $P_1$.
	
	Assume that both $\mathscr G$ and the class of the cover graphs of the posets in $\mathscr P$ are weakly sparse. 
	By an easy application of the pigeon-hole principle, if the cover graph of a poset in $\mathscr P'$ contains a large balanced complete bipartite graph, we can assume that this complete bipartite graph is between elements marked $P_i$ and elements marked $P_j$.
	This clearly would contradict either the assumption on $\mathscr P$ or the assumption on $\mathscr G$.
	
	Conversely, we define a interpretation $\interp{I}$ by
	\begin{align*}
		\nu(v)&:=\ground(v)\\
		\rho_<(u,v)&:=\exists u',v'\ P_4(u')\wedge P_4(v')\wedge (u'<u)\wedge (v'<v)\wedge (u'<v')\\
		\rho_E(u,v)&:=\exists u',v'\ P_2(u')\wedge P_3(v')\wedge (u'<u)\wedge (v'<v)\wedge (u'<v')
	\end{align*}
Let $(\interp{I}',\interp{T'})$ be a modelization of $\mathscr C$ in $\mathscr D$, 
	it is clear that $(\interp{I}'\circ\interp{I},\interp{T}\circ\interp{T}')$ is a modelization of $\mathscr C$  in $\mathscr P'$.
\end{proof}

\begin{figure}
    \centering
    \includegraphics[width=\textwidth]{}
    \caption{The coupling of a poset and a graph (on the left, with cover graph in black and graph edges in red) is transduced into a poset (on the right). The adjacency of $u$ and $v$ on the left is reflected by the covers $u_2\prec v_3$ and $v_2\prec u_3$.}
    \label{fig:order_all}
\end{figure}

\lcworders
\begin{proof}
	If $\mathscr C$ is transduction-equivalent to a class $\mathscr P$ of posets whose cover graphs have bounded pathwidth, then $\mathscr P$ is an {\sf MSO}-transduction of a class with bounded pathwidth, hence has bounded linear cliquewidth.
	
	Conversely, assume that $\mathscr C$ has bounded linear cliquewidth, by \zcref{th:main} it has a
    modelization with a coupling $\mathscr D$ of a class of bounded pathwidth colored
    graphs with a class of posets formed by unions of chains. 
	According to \zcref{lem:order_all}, there is a modelization of $\mathscr D$ in a class $\mathscr P'$ of colored posets, whose cover graphs form a weakly sparse class.
	As the class $\mathscr H$ of the cover graphs of the posets in  $\mathscr P'$ is a transduction of $\mathscr P'$, (hence of $\mathscr D$, thus of $\mathscr C$), it has bounded linear cliquewidth. According to \zcref{fact:ws}, as $\mathscr H$ is a weakly sparse class with bounded linear cliquewidth, it has bounded pathwidth.
	\end{proof}

We conjecture that this characterization  extends in a natural way to a characterization of classes with bounded cliquewidth.

\conjcw

We note that \zcref{thm:amalg_model} allows to give a slightly weaker decomposition than \zcref{lcw_pairing_poset} for classes with a parameter $2$ decomposition on a base class with bounded linear cliquewidth.

\begin{corollary}\label{lcw_dec_transeq}
If a class of graphs $\mathscr C$ has a bounded-size parameter 2 decomposition on a base class with bounded linear
cliquewidth graphs, then it has a modelization in a class of partially-ordered graphs $\mathscr D$, whose edge relation is degenerate
and whose poset has cover graphs of bounded pathwidth.
\end{corollary}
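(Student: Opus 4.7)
My plan is to retrace the proof of \zcref{th:main}, keeping its first two ingredients but drawing the weaker conclusion allowed when only a decomposition (rather than bounded linear cliquewidth of $\mathscr C$ itself) is available.

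First I would apply \zcref{lem:dec2split} to the parameter $2$ bounded-size decomposition of $\mathscr C$ on a bounded linear cliquewidth base, obtaining integers $n,h$ such that every $G\in\mathscr C$ admits a split of size $n$ and height $h$. Then \zcref{thm:amalg_model} gives a modelization $(\interp{SBuild}_{n,h},\sparse_{n,h})$ of $\mathscr C$ in a class $\mathscr D$ of amalgams $\am{V(G),(\mathbf T_i),(\mathbf T_{i,j})}$ of height at most $h$. Each $\mathbf M\in\mathscr D$ carries the binary relations $E$ and $\vartriangleleft$ inherited from the underlying cotrees and bicotrees; since $\vartriangleleft$ is a disjoint union of linear orders it is a (strict) partial order, so $\mathscr D$ may be regarded as a class of (colored) partially-ordered graphs, and the interpretation $\interp{SBuild}_{n,h}$ still reads $G$ off this reduct: by \zcref{rem:sigma3} the tree structures inside each $\mathbf T_i$ and $\mathbf T_{i,j}$ are recoverable from their $\{M_r,E,\vartriangleleft\}$-reducts once the partition into trees has been identified via connected components of $\mathbf M\setminus\ground(\mathbf M)$ (which uses only $E$ and $\ground$).

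It then remains to verify the two structural conclusions. The $E$-edges of $\mathbf M$ decompose into the tree edges inside each $\mathbf T_i$ and $\mathbf T_{i,j}$ (which contribute degeneracy $1$) together with at most $n$ star edges per ground vertex linking it to its copies in $\mathbf T_i$ and in each $\mathbf T_{i,j}$ with $j\ne i$; peeling off tree leaves first and then ground vertices bounds the degeneracy of $\mathbf M$ by a constant depending only on $n$. The relation $\vartriangleleft$, by construction, is the disjoint union over all $C$-nodes of the underlying semi-plane trees of the linear order on their children, so its cover graph is a disjoint union of paths, which has pathwidth $1$. I do not foresee any real obstacle: once the amalgam machinery of \zcref{thm:amalg_model} is in hand, the proof is simply a matter of rereading its output as a colored partially-ordered graph and noting which of the two strong conclusions of \zcref{th:main} survives when bounded linear cliquewidth is assumed only of the base of the decomposition and not of $\mathscr C$ itself.
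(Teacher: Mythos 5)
Your proposal is correct and follows essentially the same route as the paper: invoke \zcref{lem:dec2split} to get splits of size $n$ and height $h$, apply \zcref{thm:amalg_model} to obtain the modelization $(\interp{SBuild}_{n,h},\sparse_{n,h})$ in amalgams, and then observe that the $\vartriangleleft$-reduct is a disjoint union of chains (cover graphs of pathwidth $1$) while the $E$-reduct is degenerate. Your added details (degeneracy via the union of forests and bounded-degree star attachments, recoverability of the tree structure from the $\sigma_3$-reduct) only spell out what the paper states in passing.
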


\begin{proof}
According to \zcref{lem:dec2split}, there exist integers $n$ and $h$ such that $\mathscr C$ admits splits of size $n$ and height $h$.
According to \zcref{thm:amalg_model}, $(\interp{SBuild}_{n,h},\sparse_{n,h})$ is a modelization of $\mathscr C$ in the class $\mathscr C^\text{mod}$ of its accessible models.
By construction, the $\vartriangleleft$-reduct of 
$\mathbf M\in \mathscr C^\text{mod}$ is a disjoint union of linear orders. On the other hand, the $E$-reduct of $\mathscr C^\text{mod}$ is a degenerate class.
\end{proof}

Graphs captured by \zcref{lcw_dec_transeq} and not by \zcref{th:main} include for
instance proper circular arc graphs. Degeneracy is however a rather weak property in
terms of logical tameness, as 1-subdivided cliques are 2-degenerate but
not even monadically dependent.
In order to strengthen the result of \zcref{lcw_dec_transeq}, we need to consider
decompositions for every parameter, which will be the aim of the next section.

\section{Decomposition of classes admitting bounded-size bounded linear cliquewidth decompositions}
\label{sec:lcw_dec}

In this section, we prove that the conclusion of \zcref{lcw_dec_transeq} can be strengthened when, instead of assuming that a class has a bounded-size parameter $2$ decomposition on a base class with bounded linear cliquewidth we assume that such a bounded-size decomposition exists for each value of the parameter.

\thmStrong

The main difficulties in the proof of this theorem are twofold:
First, the property of having bounded-size  bounded linear cliquewidth decompositions is not known to be preserved by transductions. 
Second, the model that we construct are not  well-preserved when we consider an induced subgraph: if $\mathbf M$ is a model of $G$ and $X\subseteq V(G)$, then $\mathbf M\langle X\rangle$ is generally not accessible.

Let us give an idea of the proof. 
As in \zcref{lcw_dec_transeq}, we first consider a bounded-size parameter $2$ decomposition of a graph $G$ on a base class with bounded linear cliquewidth and deduce the existence of a degenerate model $\mathbf M$.
Our aim is to prove that the so-obtained models actually admit bounded-size bounded linear cliquewidth covers (hence
bounded-size bounded linear cliquewidth
decompositions) for every value of the parameter.
This situation is fairly similar to that which occurs when considering the sparsification of a structurally bounded expansion class obtained from a bounded-size parameter $2$ bounded shrubdepth decomposition \cite{SBE_drops}.

In order to prove the property for a given value $p$ of the parameter, we consider a set $X$ of $p$ points in the domain of the models.
The first observation is that there exists a subset $Y_X$ of the ground of the model with $|Y_X|\leq|X|$ such that the model $\mathbf M\langle Y_X\rangle$ generated by $Y_X$ contains all the elements of $X$. 

In an ideal world, $\mathbf M\langle Y_X\rangle$ would be an accessible model of $G[Y_X]$, and we could derive from the fact that $G[Y_X]$
has bounded linear cliquewidth that $\mathbf M\langle Y_X\rangle$,
being a transduction of a graph with bounded linear cliquewidth, has itself bounded linear clique-width. 
However, it not true in general that $\mathbf M\langle Y_X\rangle$ is an accessible model of $G[Y_X]$.

In order to circumvent this issue, we prove that there exists a small set $W\supseteq Y_X$ (with $|W|\in\mathcal O(Y_X)$) such that 
an accessible model $\mathbf M_W$ of $G[W]$ has the same submodel induced by $Y_X$ as $\mathbf M$, that is $\mathbf M_W\langle Y_X\rangle=\mathbf M\langle Y_X\rangle$.

This motivates the introduction of the following notion of an anchor of an accessible model.
(The next definition is understood to apply for a given (fixed) class of graphs $\mathscr C$ and  (fixed) modelization  of $\mathscr C$.)

\begin{definition}
	    Let $\mathbf M,\mathbf M'$ be accessible models with respective grounds $L$ and $L'$, such that $L'\subseteq L$, and let $F:L\rightarrow\mathcal P(L)$ be a mapping.
	
	We say that $F$ is \emph{anchoring} for the pair $(\mathbf M, \mathbf M')$ if
    $u\in F(u)$ for every $u\in L$ and
	\[
	\forall X\subseteq L,\qquad\bigcup_{u\in X}F(u)\subseteq L'\quad\Longrightarrow\quad\mathbf M\langle X\rangle=\mathbf M'\langle X\rangle.
	\]
	(Formally, the equality $\mathbf M\langle X\rangle=\mathbf M'\langle X\rangle$ is understood here as the existence of an isomorphism of $\mathbf M\langle X\rangle$ and $\mathbf M'\langle X\rangle$ fixing $X$.)
	
	Let $\mathbf M$ be an accessible model with ground $L$.
	A mapping $F:L\rightarrow\mathcal P(L)$ is
    an \emph{anchor} of $\mathbf M$ if 
    for every  $L'\subseteq L$ there exists an accessible model $\mathbf T_{L'}$ with ground $L'$ such that $F$ is anchoring for $(\mathbf T,\mathbf T_{L'})$ (See \zcref{fig:enter-label}).
    The \emph{size} of the anchor $F$ is $\max_{x\in L}|F(x)|$.
\end{definition}

Note that $u\mapsto L$ is always an anchor, but the challenge is to produce an anchor of
bounded size.

\begin{figure}[ht]
    \centering
$\begin{xy}
    \xymatrix@R=.1\textwidth @C=.2\textwidth{
    \mathbf M\ar@/_/[dd]_{\interp{SBuild}_{n,h}}\ar[rr]^{\braket{X}}&&\mathbf M\langle X\rangle\ar[dd]^{\interp{SBuild}_{n,h}}\\
    &\mathbf M_W\ar[ur]^{\langle X\rangle}& \\
G\ar@{..>}@/_/[uu]_{\interp{Sparse}_{n,h}}\ar[r]^{[W]}&G[W]\ar@{..>}[u]_{\interp{Sparse}_{n,3h}}\ar[r]^{[X]}&G[X]
    }
\end{xy}
$
    \caption{We consider a clean  model $\mathbf M$ of a graph $G$.
    (Hence, $G=\interp{SBuild}_{n,h}(\mathbf M)$ and $\mathbf M\in\sparse_{n,h}(G)$.)
    Let $W\supseteq \bigcup_{v\in X}F(u)$. Then, applying $\sparse_{n,3h}$ to $G[W]$ is enough to recover the restriction $\mathbf M\langle X\rangle$ of $\mathbf M$.
    }
    \label{fig:enter-label}
\end{figure}

The next two subsections are dedicated to the construction of the model $\mathbf T_W$ in the case where $\mathbf T$ is a cotree and in the case 
where $\mathbf T$ is a bicotree.
The first case might have a more direct proof, but the given proof can be considered as a warm-up for the case of where $\mathbf T$ is a bicotree.

Recall that a cotree \emph{clean} if its internal nodes have at least two children and they alternate between joins and disjoint unions and that a cotree of height $h$ is clean if and only if it is accessible in the modelization  $(\build[1],\interp{Cotree}_h)$.

\begin{lemma}
	\label{lem:anchor1}
	Every clean cotree $\mathbf T$ of height at most $h$ has a anchor
    of size at most $h$.
\end{lemma}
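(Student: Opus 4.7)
My plan is to build the anchor $F$ explicitly from the tree shape of $\mathbf T$. For each leaf $u$ and each internal ancestor $v$ of $u$ in $\mathbf T$, cleanliness guarantees that $v$ has at least two children, so I can fix a witness leaf $w^u_v$ chosen to be a descendant of a sibling (in $\mathbf T$) of the unique child of $v$ lying on the root-to-$u$ path. I then set
\[
F(u) := \{u\} \cup \{w^u_v : v \text{ is an internal ancestor of } u \text{ in } \mathbf T\}.
\]
Since $\mathbf T$ has height at most $h$, the root-to-$u$ path contains at most $h$ vertices, hence at most $h-1$ of them are internal ancestors of $u$, so $|F(u)| \le h$, matching the claimed size bound.

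For the anchor condition, given any $L' \subseteq L(\mathbf T)$, I take $\mathbf T_{L'}$ to be the unique clean cotree representing the cograph $G[L']$; since it has height at most $h$, it is accessible by \zcref{rem:cograph_model}. The aim is then to show that whenever $\bigcup_{u \in X} F(u) \subseteq L'$, one has $\mathbf T\langle X\rangle = \mathbf T_{L'}\langle X\rangle$. It is convenient to regard $\mathbf T_{L'}$ as obtained from $\mathbf T\langle L'\rangle$ by iteratively (i) contracting each internal node that has a single child with descendants in $L'$, and (ii) merging the two adjacent same-type nodes that consequently appear (these arise because types strictly alternate along branches of the clean tree $\mathbf T$). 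The heart of the argument is that every internal ancestor $v$ of some $u \in X$ survives this process: by construction, $v$ has descendants in $L'$ through at least two of its children in $\mathbf T$ --- the child leading to $u\in L'$, and the sibling subtree containing the witness $w^u_v\in L'$. Since the types alternate along the root-to-$u$ path, two surviving ancestors of $u$ that are adjacent in $\mathbf T$ are of different types and therefore do not merge with each other either, so the subtree spanned by the ancestors of $X$ is preserved with its types intact.

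The main obstacle I anticipate is controlling the extra children that a surviving ancestor $v$ may acquire in $\mathbf T_{L'}$ when a non-ancestor child $c$ of $v$ contracts and its grandchildren are absorbed into $v$. The key observation to overcome this is that no such absorbed node can be an ancestor of $X$: if the subtree rooted at $c$ contained some $u' \in X$, then $c$ itself would be an internal ancestor of $u'$, and the witness $w^{u'}_c \in F(u') \subseteq L'$ would ensure that $c$ has two children with descendants in $L'$, preventing its contraction. Consequently, any absorbed node fails to lie in $\down{X}$ computed in $\mathbf T_{L'}$ and therefore does not appear in $\mathbf T_{L'}\langle X\rangle$. A straightforward induction on depth then yields a type-preserving, $X$-fixing isomorphism between $\mathbf T\langle X\rangle$ and $\mathbf T_{L'}\langle X\rangle$, establishing that $F$ is indeed an anchor of size at most $h$.
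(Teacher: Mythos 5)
Your proof is correct, and the anchor you build is exactly the one the paper uses: the paper's recursive definition $F(u)=F_i(u)\cup\{u'\}$ with $u\curlywedge u'=r$ unrolls precisely to your $F(u)=\{u\}\cup\{w^u_v\colon v \text{ an internal ancestor of } u\}$, with the same size bound $h$. Where you genuinely diverge is in verifying the anchoring property. The paper proceeds by induction on the height: it reassembles $\mathbf T_{L'}$ from the recursively obtained trees $\mathbf T_{L_i'}$ (attaching them as children of a new root or identifying roots according to type), and it needs \zcref{claim:cog} --- connectivity of a cograph is reflected in any induced subgraph containing two leaves whose infimum is the root --- to certify that the reassembled root has at least two children, i.e.\ that the result is clean. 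You instead take $\mathbf T_{L'}$ to be the unique clean cotree of $G[L']$ outright and analyze the normalization of $\mathbf T\langle L'\rangle$ (suppressing unary internal nodes and merging the same-type parent--child pairs this creates under strict alternation), showing that the witnesses $w^u_v\in L'$ keep every ancestor of $X$ non-unary throughout, and that nodes absorbed into a surviving ancestor have no descendants in $X$ and hence never enter $\down X$ in $\mathbf T_{L'}$; cleanliness then comes for free, and no induction or connectivity claim is needed. This is arguably more direct, and the paper itself concedes that ``the first case might have a more direct proof.'' The trade-off is that your argument leans on two facts specific to cographs: the uniqueness of the clean cotree, and the fact that normalization does not increase height (giving accessibility via \zcref{rem:cograph_model}). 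Both fail for bicotrees --- the paper notes that a sob has no unique clean bicotree model and indeed has clean bicotree models of unbounded height --- so your contraction-based method would not transfer to \zcref{lem:anchor2}, whereas the paper's inductive template is designed precisely so that it is reused there essentially verbatim (with the type-O case and the height blow-up to $3h$ handled on top). In short: same anchor, correct but different verification, more elegant in this special case, less robust as a template for the harder lemma.
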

\begin{proof}
	We prove the statement by induction on $h$.
	If $h=1$, then $F(u)=\{u\}$ is an anchor of $\mathbf T$. Let $h>1$ be such that the statement has been proved for every $h'<h$, and let $\mathbf T$ be a clean cotree of height $h$ and let $G=\build[1](\mathbf T)$. Let $r$ be the root of $\mathbf T$, let $L=L(\mathbf T)$, and let $v_1,\dots,v_k$ be the children of $r$. For $i\in[k]$, let $\mathbf T_i=T{\upharpoonright}v_i$, let $L_i=L(\mathbf T_i)$,  and let $F_i$ be an anchor of $\mathbf T_i$. (By induction, $F_i$ exists and has size at most $h-1$.)
	For $u\in L_i$, let $F(u)=F_i(u)\cup\{u'\}$, where $u'\in L$ and $u\curlywedge u'=r$.
	
	Let $L'\subseteq L$ and let, for $i\in[k]$, $L_i'=L'\cap L_i$. For each $i\in[k]$ such that $L_i'\neq\emptyset$, as $F_i$ is an anchor of $\mathbf T_i$, there exists a clean cotree $\mathbf T_{L_i'}$ such that $F_i$ is anchoring for
    $(\mathbf T_i,\mathbf T_{L_i'})$.
	
	Before constructing $\mathbf T_{L'}$ we take time for a short claim on the structure of induced subgraphs of a cographs.
	\begin{claim}
		\label{claim:cog}
		Let $\mathbf T$ be a clean cotree and let $G=\build[1](\mathbf T)$.
		
		Let $L'\subseteq L(\mathbf T)$. If $L'$ contains two vertices $x,y$, whose  infimum in $\mathbf T$ is the root of $\mathbf T$, then $G$ is connected if and only if $G[L']$ is connected.
	\end{claim}
	\begin{claimproof}
		Assume $G$ is connected. Then, the root of $\mathbf T$ is of type J. Hence, $x$ and $y$ are adjacent and every vertex $v\in V(G)$ is adjacent to $x$ or $y$.
		As $x$ and $y$ are both in $L'$, it follows that $G[L']$ is connected. 
		Conversely, assume that $G$ is disconnected. Then $\overline G$ is connected and, by the above, $\overline G[L']$ is connected. As $G[L']$ is a co-connected cograph, it is disconnected.
	\end{claimproof}
	
	We construct the cotree $\mathbf T_{L'}$ as follows: 
	\begin{itemize}
		\item If there is no $u\in L'$ such that $F(u)\subseteq L'$, we let $\mathbf T_{L'}$ be a clean cotree model of $G[L']$. 
		Then, $F$ is obviously anchoring for $(\mathbf T,\mathbf T_{L'})$.
		\item If the root of $\mathbf T$ is of type $U$, then we let the root $r$ of $\mathbf T_{L'}$ be of type $U$ and, for each $i\in[k]$ such that $L_i'\neq\emptyset$, 
		\begin{itemize}
			\item either the cotree $\mathbf T_{L'_{i}}$ is added as a child of $r$ if the type of its root is $J$,
			\item or the root of the cotree $\mathbf T_{L'_i}$ is identified with $r$ if its type is U.
		\end{itemize}
		By construction, the types of the internal nodes of $\mathbf T_{L'}$ alternate. 
		By assumption, there exists $u\in L'$ such that $F(u)\subseteq L'$. Hence, there exists $u'\in L'$ such that the infimum of $u$ and $u'$ in $\mathbf T$ is its root. By \zcref{claim:cog}, $G[L']$ is disconnected.
		Thus, the root $r$ of 	$\mathbf T_{L'}$ has at least two children. Thus, 	$\mathbf T_{L'}$
		is clean. 
		
		Let $X\subseteq L$ be such that $\bigcup_{u\in X}F(u)\subseteq L'$. Note that 
		$X\cap L_i=X\cap L_i'$ for every $i\in [k]$.
		If $u\in X\cap L_i'$, then $F_i(u)\subseteq L_i'$. Thus, as $F_i$ is anchoring for $(\mathbf T_i,\mathbf T_{L'_i})$, $\mathbf T_i\langle X\cap L_i\rangle=\mathbf T_{L_i'}\langle X\cap L_i\rangle$. Hence, as $\mathbf T\langle X\rangle$ (resp. $\mathbf T_{L'}$) is
        formed by joining $r$ to the roots of $\mathbf T_i\langle X\cap L_i\rangle$ (resp.
        $\mathbf T_{L'_i}\langle X\cap L_i\rangle$), $\mathbf T\langle X\rangle = \mathbf T_{L'}\langle X\rangle$ and $F$ is anchoring for $(\mathbf T, \mathbf T_{L'})$.
		\item If the root of $\mathbf T$ is of type $J$, the reasoning followed for the previous case applies \emph{mutatis mutandis}.
			\end{itemize}
\end{proof}

We now consider the bicotree case. Recall that every clean bicotree of height $h$ is  accessible in the modelization  $(\build[2],\interp{Bicotree}_h)$.
\begin{lemma}
	\label{lem:anchor2}
    Every clean bicotree $\mathbf T$ of height at most $h$ has an anchor $F$ of size less than $5^{h+1}$.
\end{lemma}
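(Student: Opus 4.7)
Plan: I would prove the statement by strong induction on $h$. The base case $h=1$ is trivial with $F(u):=\{u\}$, of size $1<5^2$. For $h\ge 2$, let $\mathbf T$ be a clean bicotree of height $h$ with root $r$ of type U, B, or O, and children $v_1,\dots,v_k$; set $\mathbf T_i:=\mathbf T{\upharpoonright}v_i$ with $L_i:=L(\mathbf T_i)$. By induction, each $\mathbf T_i$ admits an anchor $F_i$ of size strictly less than $5^h$. The goal is to define, for every $u\in L_i$, a set $F(u):=F_i(u)\cup W(u)$ with $|W(u)|\le 5$, so that $|F(u)|\le 5^h+4<5^{h+1}$ for $h\ge 1$.

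When $r$ is of type U, the construction mimics \zcref{lem:anchor1}: $W(u)$ consists of a single witness in some sibling subtree $L_j$, $j\neq i$ (or is empty if $k=1$), and $\mathbf T_{L'}$ is built by assembling the inductive $\mathbf T_{L_i'}$ under a fresh U-root with the usual flattening. The B-case reduces to the U-case via the duality of \zcref{rem:dual} applied to $\mathbf T^\star$. In the O-case, for $u\in L_i$ of color $c$ I would populate $W(u)$ with (i)~a witness $u_0\in L_i$ of color opposite to $c$, available by the cleanness of $\mathbf T$; (ii)~for each side $\epsilon\in\{L,R\}$ of $v_i$ in $\vartriangleleft$, two witnesses of colors $1$ and $2$ lying in a common sibling subtree $L_{j_\epsilon}$ on that side (both available, again by cleanness of $\mathbf T$). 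Consequently, whenever $\bigcup_{u\in X}F(u)\subseteq L'$, every $L_i'$ meeting $X$ is bichromatic in $L'$, and has a bichromatic sibling in $L'$ on each side of its position in $\mathbf T$. This is the essential requirement to keep an O-root at the top of $\mathbf T_{L'}$: the anchored bichromatic subtrees form the O-root's backbone, while monochromatic orphan subtrees (those meeting $L'\setminus\bigcup_{u\in X}F(u)$) are grouped by opposite colors in their correct positions in the ordering, following the schema of \zcref{lem:cleanbic}, yielding bichromatic bundles that slot into the O-root without disturbing the anchored skeleton.

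The main obstacle I anticipate is verifying that this pairing of monochromatic orphans always produces a clean bicotree of $G[L']$ whose restriction to $\downarrow X$ exactly matches $\mathbf T\langle X\rangle$, given the order-dependent adjacency of an O-node (a color-$1$ vertex at position $k$ is adjacent precisely to the color-$2$ vertices at positions $>k$): the pairings of orphans, and their placement among the anchored subtrees, must respect the relative positions in $\vartriangleleft$. Assuming this pairing goes through, the anchoring equality $\mathbf T_{L'}\langle X\rangle=\mathbf T\langle X\rangle$ follows from the inductive hypothesis applied inside each anchored $\mathbf T_{L_i'}$, together with the fact that the added orphan bundles lie outside $\downarrow X$. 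Should five witnesses prove insufficient to resolve all orphan configurations, a fallback is to enlarge $W(u)$ by further left-/right-side colored witnesses, which remains compatible with the $5^{h+1}$ bound as $5^h+O(1)<5^{h+1}$ for $h\ge 1$.
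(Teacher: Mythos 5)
Your U-case has a genuine gap, and it is precisely the step where bicotrees differ from the cotrees of \zcref{lem:anchor1}. For bicotrees, cleanness is \emph{semantic}: the children of a U-node must model the connected components of the graph at that node. When you restrict to $L'$, the graph $G_i[L']$ below a child $v_i$ may fall apart into several components, and any clean model of $G[L']$ must place these components in \emph{different} children of its U-root; if $X\cap L_i$ meets two of them, then in $\mathbf T_{L'}$ the infimum of two such points is the U-root, whereas in $\mathbf T\langle X\rangle$ it is $v_i$ (a node of type B or O below the root), so no clean $\mathbf T_{L'}$ can agree with $\mathbf T\langle X\rangle$ on the downset of $X$. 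A single sibling witness does nothing to prevent this: it only certifies that $G[L']$ is disconnected, which was the relevant issue in \zcref{lem:anchor1} (where cleanness is purely syntactic) but not here. The paper's fix is to make $F(u)$ \emph{connected inside $G_i$}: it fixes a representative $t_i\in L_i$ and puts into $F(u)$ a shortest path from every $x\in F_i(u)$ to $t_i$; since sobs exclude induced $P_7$ (\zcref{sob_p7}) these paths have length at most $5$, whence $|F(u)|\le 5|F_i(u)|+1$, and whenever $F(u)\subseteq L'$ the whole of $F_i(u)$ lies in the single component of $G_i[L']$ containing $t_i$, to which the inductive anchor applies. This multiplicative recursion is the entire reason the bound is $5^{h+1}$; your additive bookkeeping $|F(u)|\le 5^h+4$, which would in fact yield an $O(h)$ anchor, is a symptom that the mechanism is missing --- an $O(h)$ bound fails for exactly the reason above.

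Your O-case also omits a necessary ingredient: witnesses in the \emph{extreme} subtrees $L_1$ and $L_k$. If, say, $L_1'$ is monochromatic of color $2$, its vertices receive no cross edges under the O semantics (a color-$2$ vertex in the leftmost position is adjacent to no color-$1$ vertex), so $G[L']$ may be disconnected; every clean model of it must then have a U-root (an O-node whose children are all bichromatic always creates edges between consecutive children, hence models only connected graphs), and agreement with the O-rooted $\mathbf T\langle X\rangle$ fails even when $X$ avoids $L_1$. Your orphan-pairing cannot repair a leading color-$2$ (or trailing color-$1$) block, because a B-bundle joining it to a block on its opposite side creates edges violating the order. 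This is why the paper's $F(u)$ contains $t_{i',j}$ for $i'\in\{1,k\}$ and both colors $j$: it can then bail out to an arbitrary clean model from \zcref{lem:cleanbic} whenever $L_1'$ or $L_k'$ is monochromatic, since in that case no $u$ satisfies $F(u)\subseteq L'$ and there is nothing to check. Moreover, rather than pairing orphans, the paper partitions the surviving indices into successive intervals around pivots and bundles each interval under a B-node; the immediate-neighbor witnesses (your item (ii), which must be taken in the subtrees at positions $i-1$ and $i+1$ exactly, not in some farther sibling) then force the interval containing an anchored index to be a singleton, so the anchored subtree hangs directly off the new O-root and the downsets match. You half-anticipated the shortfall in your fallback remark, but enlarging $W(u)$ by constantly many witnesses cannot repair the U-case, which genuinely needs the multiplicative path construction.
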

\begin{proof}
    We prove the statement by induction on the height $h$ of $\mathbf T$.
    If $h=1$, then $F(u)=\{u\}$ is an anchor of $\mathbf T$.
    Let $h>1$ be such that the statement has been proved for every $h'<h$, and let $\mathbf T$ be a clean bicotree with height $h$.
    Let $r$ be the root of $\mathbf T$, let $L=L(\mathbf T)$, and let $v_1,\dots,v_k$ be the children of $r$. For $i\in[k]$, let $\mathbf T_i=\mathbf T{\upharpoonright} v_i$, let $L_i=L(\mathbf T_i)$, let $G_i=\build[2](\mathbf T_i)$, and let  $F_i$ be an anchor of $\mathbf T_i$. (By induction, $F_i$ exists and has size less than $5^{h-1}$.)

    We consider different cases depending on the type of $r$.
\begin{itemize}
    \item Assume $r$ is of type U. We choose $t_i\in L_i$ for each $i\in[k]$. Note that each $G_i$ is connected as $\mathbf T$ is clean.
    For $u\in L_i$, define $F(u)$ as the union, for $x\in F_i(u)$, of the vertex set of a shortest path from $x$ to $t_i$. According to \zcref{sob_p7}, these paths have length at most $5$, so $|F(u)|\leq 5|F_i(u)|+1$. 

    Let $L'\subseteq L$. We construct the bicotree $\mathbf T_{L'}$ as follows: For each $i$ with $t_i\in L'$, let $H_i$ be the connected component of $G_i[L']$ containing $t_i$ and let $L_i'=L'\cap V(H_i)$.
    As $F_i$ is an anchor of $\mathbf T_i$, there exists
    a clean bicotree $\mathbf T_{L_i'}$ with height at most $3(h-1)$ such that $F_i$ is
    anchoring for $(\mathbf T_i,\mathbf T_{L_i'})$.
    We construct the bicotree $\mathbf T_{L'}$ as follows: it has a root $r$ of type U, and the children of $r$ are all the $\mathbf T_{L_i'}$ (for $i$ such that $t_i\in L'$) and a clean  bicotree model with height at most $3(h-1)$ for each connected component of $G[L']$ that contains no $t_i$ (such a clean bicotree exists, according to \zcref{lem:cleanbic}). By construction, $\mathbf T_{L'}$ is clean and has height at most $3h-2$.

    Let $X\subseteq L$ be such that $\bigcup_{u\in X}F(u)\subseteq L'$. If $u\in L_i$, then as $G_i[F(u)]$ is connected and contains $t_i$, we have $F(u)\subseteq L_i'$. Hence, for every $i\in[k]$ such that $X\cap L_i\neq\emptyset$, we have $\bigcup_{u\in X\cap L_i}F_i(u)\subseteq\bigcup_{u\in X\cap L_i}F(u)\subseteq L_i'$. As $F_i$ is anchoring for
    $(\mathbf T_i,\mathbf T_{L_i'})$, we deduce $\mathbf T_i\langle X\cap L_i\rangle=\mathbf T_{L_i'}\langle X\cap L_i\rangle$. It follows that $\mathbf T\langle X\rangle=\mathbf T_{L'}\langle X\rangle$, hence $F$ is an anchor of $\mathbf T$.
    \item Assume $r$ is of type B. This case is handled like the case where the root is of type U  (considering connected components of the bi-complement).
    \item Assume $r$ is of type O.
    For each $i\in [k]$ and $j\in\{1,2\}$, let $t_{i,j}\in L_i$ have color $j$. (Such vertices exist as $\mathbf T$ is clean.) For $u\in L_i$ we define
\[
F(u)=F_i(u)\cup \{t_{i',j}\colon i'\in\{1,k\}, j\in\{1,2\}\}\cup \{t_{i',j}\colon |i-i'|\leq 1, j\in\{1,2\}\}.
\]
    Note that $|F(u)|\leq |F_i(u)|+10$.
    
    Let $L'\subseteq L$ and let, for $i\in[k]$, $L_i'=L'\cap L_i$. For each $i\in [k]$, as $F_i$ is an anchor of $\mathbf T_i$, there exists a clean bicotree $\mathbf T_{L_i'}$ with ground $L'_i$ such that $F_i$ is anchoring for 
    $(\mathbf T_i,\mathbf T_{L_i'})$.
    We construct the bicotree $\mathbf T_{L'}$ as follows: 
    \begin{itemize}
        \item If $L_1'$ or $L_k'$ is monochromatic, $\mathbf T_{L'}$ is a clean  bicotree model of $G[L']$ with height at most $3h$ (such a clean bicotree exists, according to \zcref{lem:cleanbic}).
        Indeed, no $u\in L'$ will ever satisfy $F(u)\subseteq L'$, so there is nothing to check.
        \item Otherwise, there exists a partition of the set $\{i\in[k]\colon L_i'\neq\emptyset\}$ into successive intervals $I_1,\dots,I_\ell$ and indices $p_s\in I_s$ (for $s\in[\ell])$, such that
        
        \begin{itemize}
            \item $L_i'$ is monochromatic with color $1$ if $i\in I_s$ and $i<p_s$;
            \item $L_i'$ is monochromatic with color $2$ if $i\in I_s$ and $i>p_s$;
            \item $\bigcup_{i\in I_s}L'_i$ is bichromatic.            
        \end{itemize}
        The bicotree $\mathbf T_{L'}$ has a root $r$ of type O and, for each $s\in [\ell]$ (taken in order),
        \begin{itemize}
        \item $\mathbf T_{L_i'}$ as a child if $I_s=\{i\}$,
        \item or a child of type B having the bicotrees $\mathbf T_{L_i'}$ with $i\in I_s$ as children.
        \end{itemize}
        By construction, $\mathbf T_{L'}$ is a clean bicotree with height at most $3h-1$
        and models $G[L']$.

        Let $X\subseteq L$ be such that $\bigcup_{u\in X}F(u)\subseteq L'$. Note that $X\cap L_i=X\cap L_i'$ for every $i\in[k]$.
        Let $i\in [k]$ such that $X\cap L_i' \neq \emptyset$. As $F_i$ is anchoring for $(\mathbf T_i,\mathbf T_{L_i'})$, $\mathbf T_i\langle X\cap L'_i\rangle=\mathbf T_{L_i'}\langle 
        X\cap L'_i\rangle$. Let $s\in [\ell]$ such that $i\in I_s$.
        As none of the sets $L_{i-1}'$ and $L_{i+1}'$ may be monochromatic (by our choice of $F$), we get $|I_s|=1$, so $\mathbf T_{L_i'}$ is a child of $r$. We deduce that
        $\mathbf T\langle X\cap L_i\rangle = \mathbf T_{L'_i}\langle X\cap L_i\rangle$,
        and then
        $\mathbf T\langle X\rangle=\mathbf T_{L'}\langle X\rangle$. Thus,
        $F$ is an anchor of $\mathbf T$.\qedhere
     \end{itemize}
\end{itemize} 
\end{proof}

\begin{proof}[Proof of \zcref{strong_thm}]
    Let $\mathscr C$ be a  class with bounded-size bounded linear cliquewidth decompositions.
    According to  \zcref{lem:dec2split},
    there are integers $n,h$ such that every graph $G\in\mathscr C$ a split 
    of size $n$ an height $h$.
    According to \zcref{thm:amalg_model}, the pair $(\interp{SBuild}_{n,h},\sparse_{n,h})$ is a modelization  of $\mathscr C$ in amalgams. Let $\mathscr C^\text{mod}$ be the associated class of accessible models. As noted in the proof of \zcref{lcw_dec_transeq}, every $\mathbf M\in\mathscr C^\text{mod}$ is a coupling of a degenerate class of graphs and of a class of posets consisting in disjoint unions of chains.

    We now aim to prove that $\mathscr C^\text{mod}$ has bounded-size bounded linear cliquewidth covers, hence has an 
    $E$-reduct with bounded expansion (as the $E$-reduct is degenerate).

    Let $G\in\mathscr C$, and let $\mathbf M$ an accessible model of $G$ in $\mathscr C^\text{mod}$. By construction, there exists a split $\gamma_G$ of $G$ of size $n$ and height at most $h$, clean cotree models $\mathbf T_i$ of $G[\gamma_G^{-1}(i)]$ and clean bicotrees models $\mathbf T_{i,j}$ of $G[\gamma_G^{-1}(i),\gamma_G^{-1}(j)]$ such that 
    $\mathbf M=\am{V(G),(\mathbf T_i),(\mathbf T_{i,j})}$.
    Let $(\iota,(\iota_i),(\iota_{i,j}))$ be an injection system of $\mathbf M$.
    
    Let $X$ be a subset of (at most) $p$ vertices of $\mathbf M$. 
    For each $x\in X$, select $y(x)\in \ground(\mathbf M)$ such that $x\in \mathbf M\langle \{y(x)\}\rangle$ (consider
    cases depending on whether $x \in \ground(\mathbf M)$, $x\in \mathbf T_i$ or $x\in \mathbf T_{i,j}$
    for some $i,j \in [n]$).
    Let $Y(X)=\{y(x)\colon x\in X\}$.
    Then,  
    $|Y_X|\leq p$ and $X\subseteq \mathbf M\langle Y_X\rangle$. 
    According to \zcref{lem:anchor1,lem:anchor2},
    each $\mathbf T_i$ has an anchor
     $F_i$ with size at most $h$ and 
     each  $\mathbf T_{i,j}$ has an anchor $F_{i,j}$ with size less than $5^{h+1}$. 
     We define $F:V(G)\rightarrow\mathcal P(V(G))$ by
\[
F(x)=F_{\gamma_G(x)}(x)\cup\bigcup_{j\neq\gamma_G(x)}F_{\gamma(x),j}(x).
\]
Let $q:=\max_{x\in V(G)}|F(x)|$. Note that
$q<h+5^{h+1}(n-1)$.

As $\mathscr C$ admits bounded size bounded linear cliquewidth decompositions, there exist, for each integer $p$, integers $m_p,t_p$ such that 
every graph $G\in\mathscr C$ has a coloring
$\zeta:V(G)\rightarrow [m_p]$ such that the union of any $pq$ color classes has linear cliquewidth at most $t_p$. (Note that we use a decomposition with parameter $pq$ and not $p$.)

For $S\subseteq [m_p]$ with $|S|\leq pq$, let 
$U_S=\{u\in V(G)\colon \zeta(F(u))\subseteq S\}$.
We denote by $\gamma_S$ the restriction of
$\gamma_G$ to $U_S$.
Let $i\in [n]$, as $F_i$ is an anchor of $\mathbf T_i$ there exists a clean cotree $\mathbf T_i'$
with set of leaves $\gamma_S^{-1}(i)$
such that $F_i$ is anchoring $(\mathbf T_i,\mathbf T_i')$. (Hence, $L(\mathbf T_i')=L(\mathbf T_i)\cap U_S$.)
Similarly, there exists a clean bicotree $\mathbf T_{i,j}'$ with set of leaves $\gamma_S^{-1}(i)\cup\gamma_S^{-1}(j)$
such that $F_{i,j}$ is anchoring $(\mathbf T_{i,j},\mathbf T_{i,j}')$. 
Let $\mathbf R_S=\am{U_S,(\mathbf T_i'),(\mathbf T_{i,j}')}$. Note that  $\mathbf R_S\in\sparse_{3h}(G[U_S],\gamma_S)$.

Now, let $X$ be a set of $p$ points in $\mathbf M$. 
According to the definition of $\zeta$, there exists $S\in[m_p]$ such that $|S|\leq pq$ and 
$\zeta(F(Y_X))\subseteq S$ (Hence, such that
$Y_X\subseteq U_S$). It follows that 
$\mathbf T_i'\langle Y_X\cap \gamma_S^{-1}(i)\rangle=\mathbf T_i\langle Y_X\cap \gamma_S^{-1}(i)\rangle$ and 
$\mathbf T_{i,j}'\langle Y_X\cap (\gamma_S^{-1}(i)\cup\gamma_S^{-1}(j))\rangle=\mathbf T_{i,j}\langle Y_X\cap (\gamma_S^{-1}(i)\cup\gamma_S^{-1}(j))\rangle$. Thus,
$\mathbf R_S\langle Y_X\rangle=\mathbf M\langle Y_X\rangle$. It follows that 
$\mathbf M[X]\subseteq_i \mathbf M\langle Y_X\rangle=\mathbf R_S\langle Y_X\rangle\subseteq_i \mathbf R_S\langle U_S\rangle$.
 The structures $\mathbf R_S\langle U_S\rangle$ belong to a class $\mathscr S$ that is a transduction of a class with linear cliquewidth at most $t_p$, thus has bounded linear cliquewidth. Since $\mathbf R_S\langle U_S\rangle = \mathbf M_S\langle U_S\rangle$, they form a  bounded-size bounded linear cliquewidth cover of
 $\mathbf M$.
 It follows that the class $\mathscr C^\text{mod}$ has bounded-size bounded linear cliquewidth covers, hence bounded-size bounded linear cliquewidth decompositions.

\end{proof}
\section{Concluding remarks}
\label{sec:conc}
One common paradigm in the study of general dependent structures and monadically dependent classes of graphs is the appearance of tree-like structures. For graphs, those of bounded treedepth, shrubdepth, treewidth, cliquewidth, twin-width and merge-width can all be
generated to some extent from tree models, enriched with colors (unary predicates) or stable
unordered relations. In classical model theory, the decomposition of types into a stable part and a tree-like quotient was used by Shelah
\cite{shelah2015dependent} to prove the so-called generic pair conjecture in dependent theories
(see also \cite{simon2015guide} for more discussions on this topic).

In this line, we conjecture the following description of hereditary dependent classes of graphs.

\begin{conjecture}
	\label{conj:main}
	Every dependent hereditary class of graphs is a transduction of a dependent
	hereditary class, consisting of a nowhere dense class of graphs expanded by a tree order.
\end{conjecture}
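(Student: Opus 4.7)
The conjecture combines two independent unresolved ingredients: the decomposition of a dependent class into a stable part and an ordered (distal) part, and the reduction of stable classes to nowhere dense ones. My plan is to establish each separately and then combine them. Concretely, I would first aim to prove \zcref{conj:inverse}, producing a modelization of any monadically dependent hereditary class $\mathscr C$ in a coupling of a monadically stable class of binary structures with a class of posets whose cover graphs have bounded treewidth; then invoke \zcref{conj:ND} to replace the monadically stable part by a nowhere dense class; and finally upgrade the ``poset with bounded treewidth cover graph'' piece to a genuine tree order, via a standard argument that encodes a bounded-treewidth cover graph as an interpretation from a colored tree obtained out of its tree decomposition (using \zcref{lem:order_all} to route the coloring back into the poset).

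For \zcref{conj:inverse}, I would mimic the pipeline of the present paper while climbing the width hierarchy. The first step is to promote \zcref{lcw_pairing_poset} to \zcref{conj:cw}, which requires replacing sobs by a branching analog whose ``clean'' bicotrees have bounded height; this in turn needs analogues of \zcref{lem:cleanbic} and \zcref{thm:bico_model} in a tree-structured rather than linearly ordered setting. The second step is to extend this to bounded-size bounded cliquewidth decompositions by adapting the anchor technique of \zcref{sec:lcw_dec}: since the tree models stay of bounded height, anchors of polynomial size should still exist, yielding bounded-size bounded cliquewidth covers of models, hence bounded expansion for the stable reduct (by \zcref{fact:ws} and degeneracy of the construction). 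The third step is the genuine bridge from ``bounded-size bounded cliquewidth decompositions'' to arbitrary monadically dependent hereditary classes; here the recent structural characterizations of monadic dependence \cite{dreier2024flipbreakability,Bonnet2025} would be invoked to produce, for each parameter $p$, a cover by pieces that do admit a bounded-size bounded cliquewidth treatment and can then be amalgamated via the gluing of \zcref{sec:gluing}.

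The principal obstacles are visible. \zcref{conj:inverse} is itself open and, as Shelah's quote in the introduction emphasizes, reflects a very general principle about how stable and distal components combine in a dependent theory; producing an \emph{effective} class-level decomposition is a substantial challenge even in settings much more restricted than full monadic dependence. \zcref{conj:ND} is likewise open and has resisted attack outside of specific weakly sparse subfamilies. Finally, the passage from width-parameter-controlled classes to arbitrary monadically dependent ones requires a genuinely new structural theorem, playing the role, in the dependent setting, that low-treedepth colorings play in the sparse theory of Ne\v{s}et\v{r}il and Ossona de Mendez; I expect this to be the hardest obstacle, since no candidate for such a decomposition is currently known. An incremental strategy, tightening the results of this paper one width class at a time, appears to be the only realistic route toward \zcref{conj:main}.
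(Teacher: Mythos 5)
You set out to prove \zcref{conj:main}, but this statement is left open by the paper itself: the paper's only content on it is the closing observation in \zcref{sec:conc} that \zcref{conj:ND} and \zcref{conj:inverse} jointly imply \zcref{conj:main}, via the fact (following \cite{Colcombet2007}) that posets whose cover graphs have bounded treewidth are transductions of tree orders. Your first paragraph correctly reconstructs exactly this reduction---it is, in substance, the paper's own remark---but the submission is not a proof, because every load-bearing step is open. \zcref{conj:inverse} and \zcref{conj:ND} are stated as conjectures, and your third ingredient, a bridge from classes admitting bounded-size bounded cliquewidth decompositions to arbitrary monadically dependent hereditary classes, is not known to exist in any form. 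You concede all of this yourself, which makes the text a research program rather than an argument; a reviewer can only record that the conditional implication you describe is sound and already present in the paper, and that nothing unconditional has been added.

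Beyond the conditional structure, several intermediate steps you treat as routine are genuinely problematic. The promotion of \zcref{lcw_pairing_poset} to \zcref{conj:cw} is itself conjectural in the paper, and the techniques of \zcref{sec:sob} do not transfer to a branching setting: the modelization of sobs hinges on \zcref{sob_p7} (exclusion of an induced $P_7$), which makes connectivity first-order definable and drives both the recursion in \zcref{thm:bico_model} and the shortest-path construction of anchors in \zcref{lem:anchor2}; no analogous bounded-diameter property is available for the tree-structured analogue, so your claim that ``anchors of polynomial size should still exist'' has no current support. Moreover, the paper explicitly flags that it is not known whether admitting bounded-size bounded linear cliquewidth decompositions is preserved by transductions, so the amalgamation step you propose at the top level, invoking the gluing of \zcref{sec:gluing} on pieces produced by the flip-breakability characterizations, rests on a structural theorem that would itself be a major result---as you rightly identify, the missing analogue of low-treedepth colorings in the dependent world. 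In short: the reduction from \zcref{conj:ND} and \zcref{conj:inverse} to \zcref{conj:main} is correct and matches the paper; everything upstream of it is open, so no proof of \zcref{conj:main} results.
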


Note that Conjecture~\ref{conj:ND} implies, if true,  that Conjecture~\ref{conj:main}
holds for stable hereditary classes of graphs. Also, it is known that
Conjecture~\ref{conj:main} holds for classes with bounded twin-width \cite{SparseTww} (which have a particular model-theoretic meaning, as recalled above).

Dual to Conjecture~\ref{conj:main}, it is plausible that the graphs in a dependent hereditary class of graphs can be, in some sense, decomposed into a stable part and an ordered part.
This intuition echoes the result proved by Simon \cite{simon2019type}, which shows that dependent types decompose into  a stable part and an order-like quotient. 

Such decomposition results would rely on two notions,
 \emph{transduction pairing}, which is a strong notion of transduction equivalence
 between classes of structures, and \emph{class coupling}, which expresses that a
 structure is obtained by putting together relations from simpler structures (See formal
 definitions in \zcref{sec:trans} and \zcref{sec:coupling}).

In this setting we conjecture the following.

\conjMain

Note that posets whose cover graph have bounded treewidth are transductions of tree orders (as follows from~\cite{Colcombet2007})
and have nice properties. For instance, they have  bounded Boolean dimension \cite{Felsner2020} and bounded cliquewidth\footnote{If the cover graph has bounded treewidth, then it is an {\sf MSO}-transduction of a tree. As the poset is an easy {\sf MSO}-transduction of the cover graph, we deduce that the poset is an {\sf MSO}-transduction of a tree, hence has bounded cliquewidth.}. %
 It follows that Conjectures~\ref{conj:ND}~and~\ref{conj:inverse} together imply Conjecture~\ref{conj:main}.

First, we notice that the conjecture has to impose some restriction on the poset, for otherwise it would obviously hold: every class of graphs has a modelization in a class of posets. Indeed, we consider the transduction $\interp{T}$ that is the composition of a copy operation that creates $3$ clones $(v,1), (v,2)$, and $(v,3)$ of each vertex $v$, then define (for distinct $(u,i)$ and $(v,j)$)
\[
(u,i)<(v,j)\quad\iff\quad\begin{cases}
    j=3\text{ and }u=v\\
    \text{ or }i=1,j=2,\text{ and }E(u,v).
\end{cases}
\]
As the constructed posets have bounded height, they are transduction-equivalent to their cover graphs. Hence,
if we apply this transduction to a hereditary unstable class of graphs, the cover graphs of the obtained posets will not be stable.
This shows that the weakest non-trivial version of \zcref{conj:inverse} would be
the following.

\begin{conj}
	\label{conj:inverseweak}
	Every dependent hereditary class of graphs has a modelization in a dependent hereditary class, consisting in the expansion of a hereditary stable class of graphs by a partial order having a stable cover graph.
\end{conj}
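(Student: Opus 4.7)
The plan is to mimic the architecture of \zcref{strong_thm} at the broadest possible scale: obtain a suitable bounded-size cover of graphs in $\mathscr{C}$ by tame pieces, model each piece as a coupling of a stable graph with a poset whose cover graph is stable, and glue the pieces into a global modelization via an amalgam. Concretely, I would first seek a decomposition theorem for monadically dependent hereditary classes, drawing on recent characterizations such as flip-breakability \cite{dreier2024flipbreakability} or merge-width, to play the role that bounded-size bounded linear cliquewidth decompositions play in \zcref{strong_thm}. The aim is to ensure that, for every parameter $p$, the graph induced on the union of any $p$ color classes belongs to a class for which a direct modelization as a coupling of a stable graph and a union of chains can be produced, either by \zcref{th:main} or an ad hoc argument at the base-class level.

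Second, I would apply the gluing technique of \zcref{sec:gluing} (the sob-style amalgam) to assemble the local modelizations into a global one. This should yield a modelization in a coupling of a class of stable graphs and a class of disjoint unions of chains. Since disjoint unions of chains have cover graphs that are matchings, hence trivially stable, the poset component of this coupling would already satisfy the cover-graph requirement of the conjecture, and the coupling is monadically dependent by construction.

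Third, I would invoke \zcref{lem:order_all} to replace the two-part coupling by a single colored poset, and verify that the cover graph of the resulting poset is stable. Since both the local stable edge relation and the chain-union order have stable cover graphs, the pigeonhole argument inside the proof of \zcref{lem:order_all} should show that any large half-graph semi-induced in the merged cover graph would localize within one of the four copy classes, contradicting stability on one side or the other. Combined with the previous step, this would give a modelization of $\mathscr{C}$ of the form required by \zcref{conj:inverseweak}.

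The hard part, and the reason this remains a conjecture, is the very first step: for an arbitrary monadically dependent hereditary class there is no known structural decomposition comparable to bounded-size bounded linear cliquewidth decompositions. Even the weaker property of admitting bounded-size covers by tractable pieces is not known to be preserved by transductions, and monadic dependence alone is not known to supply any canonical local form that one could feed into \zcref{th:main}. A secondary obstacle is the anchoring step: to push bounded-size decompositions from $\mathscr{C}$ up to its models, one would need generalizations of \zcref{lem:anchor1} and \zcref{lem:anchor2} beyond cotrees and bicotrees, which in turn presupposes a controlled local form of the pieces. Until a decomposition theorem for all monadically dependent classes becomes available, the scheme above stalls at its entry point.
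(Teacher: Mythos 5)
You have not proved the statement, and no proof of it exists in the paper to compare against: \zcref{conj:inverseweak} is presented there as an open conjecture, derived as the weakest non-trivial form of \zcref{conj:inverse} after observing that, without a restriction on the poset, every class trivially has a modelization in posets of bounded height. Your proposal is a research program, not a proof, and you correctly identify where it stalls: there is no known analogue, for arbitrary monadically dependent hereditary classes, of the bounded-size bounded linear cliquewidth decompositions that drive \zcref{strong_thm}, nor a canonical local form of the pieces that would let the anchoring machinery of \zcref{lem:anchor1,lem:anchor2} be generalized. Your scheme does faithfully mirror the architecture the paper uses in the cases it can handle (split, local modelization, amalgam, then \zcref{lem:order_all}), so as a program it is aligned with the authors' own; but as a proof attempt it has a genuine gap at its very first step, which you concede.

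Two secondary corrections, since they would matter if the entry point were ever supplied. First, the cover graph of a disjoint union of chains is a disjoint union of \emph{paths}, not a matching (the covers of a linear order on $n$ elements form a path on $n$ vertices); this is harmless for your purposes since unions of paths are stable, but the claim as written is wrong. Second, the pigeonhole argument inside \zcref{lem:order_all} only controls \emph{weak sparseness} of the merged cover graph (excluding bicliques), whereas \zcref{conj:inverseweak} asks for a \emph{stable} cover graph: the cover edges between the $P_2$ and $P_3$ copies directly encode the edge relation $E$ of the graph component, so stability of the merged cover graph requires excluding semi-induced half-graphs there, and also ruling out half-graphs assembled across the copy classes (e.g.\ combining $E$-covers with the path covers among the $P_4$ copies). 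This needs a genuine argument using the characterization of stable classes inside dependent ones by excluded semi-induced half-graphs, not just biclique-counting, and it additionally presupposes hereditariness of the model class --- which the paper's machinery does not provide, since it explicitly does not require restrictions of accessible models to be accessible.
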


We start with an interesting consequence of Conjecture~\ref{conj:inverse}, if true.

\begin{proposition}
	 \zcref{conj:inverse} implies \zcref{conj:cw}.
\end{proposition}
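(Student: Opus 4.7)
The plan is to prove both directions of \zcref{conj:cw} in parallel to \zcref{lcw_pairing_poset}, the nontrivial direction being where \zcref{conj:inverse} enters. The easy (``if'') direction does not rely on \zcref{conj:inverse}: if $\mathscr C$ has a modelization in a class of colored posets whose cover graphs have bounded treewidth, then those posets are MSO-interpretations of bounded treewidth graphs, so by Courcelle's theorem they have bounded cliquewidth. Since $\mathscr C$ is an FO-interpretation of these posets and FO-transductions preserve bounded cliquewidth (because bounded cliquewidth coincides with FO-transducibility from tree-orders), $\mathscr C$ has bounded cliquewidth.

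For the nontrivial (``only if'') direction, I first note that if $\mathscr C$ has bounded cliquewidth, then it is an FO-transduction of tree-orders, which are monadically dependent, so $\mathscr C$ is monadically dependent. Applying \zcref{conj:inverse}, $\mathscr C$ would admit a modelization in a coupling $\mathscr D$ of a monadically stable class $\mathscr S$ of binary structures and a class $\mathscr P$ of posets whose cover graphs have bounded treewidth. As $\mathscr D$ is a transduction of $\mathscr C$, both $\mathscr D$ and its binary reduct $\mathscr S$ inherit bounded cliquewidth. Being monadically stable with bounded cliquewidth, $\mathscr S$ is transduction-equivalent to a class $\mathscr T$ of bounded treewidth by the theorem from \cite{RW_SODA}. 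Using this transduction pairing, I would rewrite the modelization as one in a coupling $\mathscr D'$ of $\mathscr T$ (weakly sparse) with $\mathscr P$, placing any auxiliary elements introduced by the pairing outside the ground.

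I would then apply \zcref{lem:order_all} to $\mathscr D'$: since both the binary part and the cover graphs of the poset part are now weakly sparse, the lemma produces a modelization of $\mathscr C$ in a class $\mathscr P'$ of colored posets whose cover graphs form a weakly sparse class. This class of cover graphs is itself a transduction of $\mathscr C$, hence has bounded cliquewidth; combining weak sparsity with bounded cliquewidth via \zcref{fact:ws} yields bounded treewidth, finishing the proof.

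The principal obstacle is the rewriting step: converting the transduction equivalence between $\mathscr S$ and $\mathscr T$ into an honest substitution inside the modelization. Since a transduction equivalence does not a priori preserve the ground, I would have to verify that the auxiliary elements introduced by the pairing transductions can consistently be kept outside the ground while still allowing $\mathscr C$ to be interpreted from the sparsified structure.
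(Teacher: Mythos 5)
Your proposal is correct and follows essentially the same route as the paper's proof: the easy direction via an {\sf MSO}-transduction of a bounded treewidth class, and the hard direction by applying \zcref{conj:inverse}, replacing the stable part by a transduction-equivalent weakly sparse class, then invoking \zcref{lem:order_all} and concluding via transitivity of transductions and \zcref{fact:ws}. The only cosmetic differences are that you sparsify the stable reduct with the stable-plus-bounded-cliquewidth theorem of \cite{RW_SODA} where the paper passes through a bounded expansion class, and the substitution subtlety you flag at the end is present in the paper as well, where it is dispatched with the same informal replacement of the $E$-reduct by its sparsification.
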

\begin{proof}
	If $\mathscr C$ is transduction-equivalent to a class $\mathscr P$ of posets whose cover graphs have bounded treewidth, then $\mathscr P$ is an {\sf MSO}-transduction of a class with bounded treewidth, hence has bounded cliquewidth.
	Conversely, assume that $\mathscr C$ has bounded cliquewidth.
	Then,  assuming \zcref{conj:inverse}, $\mathscr C$ is transduction equivalent to a $\varsigma$-structure, whose $E$-reduct is stable and whose $<$-reduct is a class of posets with bounded treewidth.
	Then, the $E$-reduct is  transduction-equivalent to a class $\mathscr D$ with bounded expansion \cite{SBE_TOCL}. Considering $\mathscr D$ instead of $\mathscr C$ if necessary, we may assume that the $E$-reduct is biclique-free.
	Then, according to \zcref{lem:order_all}, there exists a modelization of
	$\mathscr C$ in a class of posets whose cover graphs form a weakly sparse
	class $\mathscr H$. A $\mathscr H$ is (by transitivity) a transduction of a class with bounded cliquewidth, it has bounded cliquewidth. Then, it follows from \zcref{fact:ws} that $\mathscr H$ has bounded treewidth. %
\end{proof}

\bibliography{ref}
\end{document}